\providecommand{\tabularnewline}{\\}
\numberwithin{equation}{section}
\numberwithin{figure}{section}
\theoremstyle{plain}
\newtheorem{thm}{\protect\theoremname}
\theoremstyle{plain}
\newtheorem{cor}{\protect\corollaryname}
\theoremstyle{remark}
\newtheorem{rem}{\protect\remarkname}
\theoremstyle{plain}
\newtheorem{lem}{\protect\lemmaname}
\theoremstyle{remark}
\newtheorem{claim}{\protect\claimname}
\newcommand{\manuallabel}[2]{\def\@currentlabel{#2}\label{#1}}
\providecommand{\claimname}{Claim}
\providecommand{\corollaryname}{Corollary}
\providecommand{\lemmaname}{Lemma}
\providecommand{\remarkname}{Remark}
\providecommand{\theoremname}{Theorem}
\begin{document}
\global\long\def\dd{\mathrm{d}}

\global\long\def\bbc{\mathbb{C}}

\global\long\def\bbe{\mathbb{E}}

\global\long\def\bbn{\mathbb{N}}

\global\long\def\bbp{\mathbb{P}}

\global\long\def\bbr{\mathbb{R}}

\global\long\def\calC{\mathcal{C}}

\global\long\def\calD{\mathcal{D}}

\global\long\def\calF{\mathcal{F}}

\global\long\def\calG{\mathcal{G}}

\global\long\def\calH{\mathcal{H}}

\global\long\def\calL{\mathcal{L}}

\global\long\def\calM{\mathcal{M}}

\global\long\def\calR{\mathcal{R}}

\global\long\def\calZ{\mathcal{Z}}

\global\long\def\frakD{\mathfrak{D}}

\global\long\def\ind#1#2{\mathbf{1}_{#1}\left(#2\right)}

\global\long\def\veps{\varepsilon}

\global\long\def\defeq{\overset{\operatorname{def}}{=}}

\global\long\def\area#1{\operatorname{Area}\left(#1\right)}

\global\long\def\pr#1{\bbp\left[#1\right]}

\global\long\def\condpr#1#2{\bbp_{#2}\left[#1\right]}

\global\long\def\ex#1{\bbe\left[#1\right]}

\global\long\def\condex#1#2{\bbe_{#2}\left[#1\right]}

\global\long\def\exabs#1{\bbe\left|#1\right|}

\global\long\def\var#1{{\rm Var}\left[#1\right]}

\global\long\def\Dnorm#1{\frakD\left(#1\right)}

\global\long\def\gef{F_{\bbc}}

\global\long\def\lgef{F_{L}}

\global\long\def\binmax#1#2{#1\vee#2}

\global\long\def\cmplxgaus#1{N_{\bbc}\left(0,#1\right)}

\global\long\def\stdgaus{\cmplxgaus 1}

\global\long\def\truncpoly{P_{N}}

\global\long\def\seriestail{T_{N}}

\global\long\def\truncscaledpoly{P_{N,L}}

\global\long\def\vec#1{\underline{#1}}

\global\long\def\lvec#1#2{\left(#1_{1},\dots,#1_{#2}\right)}

\global\long\def\lvecl#1#2#3{\left(#1_{#2},\dots,#1_{#3}\right)}

\global\long\def\normalconst{A_{L}^{N}}

\global\long\def\refmeas{\mu_{L}}

\global\long\def\lebmeas{m}

\global\long\def\subslebmeas#1{m_{\left\{  #1\right\}  }}

\global\long\def\eqmeas#1{\mu_{\mbox{{\rm eq}}}^{#1}}

\global\long\def\minmeas#1{\mu_{Z_{#1}}}

\global\long\def\minmeassup#1#2{\mu_{Z_{#1}}^{#2}}

\global\long\def\minmeasaltsup#1#2{\widetilde{\mu}_{Z_{#1}}^{#2}}

\global\long\def\minmeasglob#1{\mu_{Z_{#1}}^{\bbc}}

\global\long\def\condmeas{\mu_{\mathrm{cond}}^{\bbc}}

\global\long\def\optmeas{\mu_{\mathrm{min}}}

\global\long\def\probmeas{\calM_{1}\left(\bbc\right)}

\global\long\def\Radonmeas{\calR\left(\bbc\right)}

\global\long\def\probmeasfiniteenerg{M_{1}^{\Sigma}\left(\bbc\right)}

\global\long\def\logpot#1#2{U_{#1}\left(#2\right)}

\global\long\def\logenerg#1{\Sigma\left(#1\right)}

\global\long\def\setd#1#2{\left\{  #1\,:\,#2\right\}  }

\global\long\def\disc#1#2{D\left(#1,#2\right)}

\global\long\def\funcdiscname{I^{\star}}

\global\long\def\funcdisc#1{\funcdiscname\left(#1\right)}

\global\long\def\funccont#1{I\left(#1\right)}

\global\long\def\funccontparamname#1{I_{#1}}

\global\long\def\funccontparam#1#2{\funccontparamname{#1}\left(#2\right)}

\global\long\def\funcenerg#1{J\left(#1\right)}

\global\long\def\linstats#1#2#3{n_{#1}\left(#2;#3\right)}

\global\long\def\regevent{E_{\mathrm{reg}}}

\global\long\def\kregevent#1{E_{\mathrm{reg}}^{#1}}

\global\long\def\holeevent#1{H_{#1}}

\global\long\def\GEFzeroset{\calZ}

\global\long\def\zeroset#1{\calZ\left(#1\right)}

\global\long\def\condzeroset#1{\GEFzeroset_{#1}}

\global\long\def\cntmeas#1{\left[#1\right]}

\author{Subhroshekhar Ghosh}
\address[S.~Ghosh]{\\} \email{subhrowork@gmail.com}

\author{Alon Nishry}
\address[A.~Nishry]{\\}
\email{alonish@tauex.tau.ac.il}

\title{Gaussian complex zeros on the hole event:\\
the emergence of a forbidden region }
\begin{abstract}
Consider the Gaussian Entire Function
\[
\gef\left(z\right)=\sum_{k=0}^{\infty}\xi_{k}\frac{z^{k}}{\sqrt{k!}},\quad z\in\bbc,
\]
where $\left\{ \xi_{k}\right\} $ is a sequence of independent standard
complex Gaussians. This random Taylor series is distinguished by the
invariance of its zero set with respect to the isometries of the plane
$\bbc$. It has been of considerable interest to study the statistical
properties of the zero set, particularly in comparison to other planar
point processes.

We show that the law of the zero set, conditioned on the function
$\gef$ having no zeros in a disk of radius $r$, and normalized appropriately,
converges to an explicit limiting Radon measure on $\bbc$, as $r\to\infty$.
A remarkable feature of this limiting measure is the existence of
a large ``forbidden region'' between a singular part supported on
the boundary of the (scaled) hole and the equilibrium measure far
from the hole. In particular, this answers a question posed by Nazarov
and Sodin, and is in stark contrast to the corresponding result of
Jancovici, Lebowitz, and Manificat in the random matrix setting: there
is no such region for the Ginibre ensemble.
\end{abstract}

\maketitle

\section{\label{sec:intro}Introduction}

\markright{GAUSSIAN COMPLEX ZEROS ON THE HOLE EVENT}In recent years,
particle systems (also known as point processes) involving local repulsion
have attracted a lot of attention (\cite{bogomolny1996quantum,lebowitz1983charge,nazarov2010random,peres2005zeros,soshnikov2000determinantal,wigner1934interaction,wigner1958distribution},
to provide a partial list). Two of the most significant mathematical
models of translation invariant planar point processes embodying local
repulsion are the Ginibre ensemble and the zeros of the standard Gaussian
Entire Function (GEF). Both of these processes originate in physics.
The Ginibre ensemble was introduced by J. Ginibre (\cite{ginibre1965statistical}),
as a non-Hermitian Gaussian matrix model; it also turns out to be
the 2D Coulomb gas at a specific temperature. The GEF was introduced
by E. Bogomolny, O. Bohigas, and P. Leboeuf (\cite{bogomolny1992distribution,bogomolny1996quantum}),
in the form of Weyl polynomials. These two ensembles share many similar
properties. For instance, their correlations decay as $\exp\left(-c\cdot\mbox{distance}^{2}\right)$
(see \cite{hough2009zeros,nazarov2010gaussian,nazarov2010random}).

For a point process, one quantity of interest is the decay rate of
the \textit{hole probability}, that is, the probability that a disk
of radius $r$ contains no points, as $r\to\infty$. One can consider
this quantity as a rough measure of the mutual repulsion (or ``rigidity'')
in the process (see \cite[Sec. 7.2]{hough2009zeros}). Both for the
Ginibre ensemble (\cite{jancovici1993large,shirai2006large}), and
for the GEF zero process (\cite{nishry2010asymptotics,sodin2005random})
the hole probability decays like $\exp\left(-cr^{4}\left(1+o\left(1\right)\right)\right)$
(for the Poisson point process, which exhibits no rigidity, the decay
rate is $\exp\left(-cr^{2}\right)$). A natural problem that arises
is how to describe the behavior of the point process conditioned to
have such a large hole. Progress on this problem will allow us to
describe the typical configurations that produce this rare event.

Among the main results of this paper is a description of these configurations
for the zero set of the GEF, conditioned upon the hole event. We show
that beyond a singular component on the boundary of the hole, there
is a second ``forbidden region'' $\left\{ r<\left|z\right|<\sqrt{e}r\right\} $,
in which the density of the zeros is negligibly small. This phenomenon
is rather surprising, and to the best of our knowledge, this is, in
fact, the first example where such a forbidden region in particle
systems has been rigorously established, or even heuristically understood
(the appearance of some type of a forbidden region or a gap was suspected
by Nazarov and Sodin, see also \cite[Fig. 2]{hough2005large}).

The work of Jancovici, Lebowitz, and Manificat \cite{jancovici1993large}
treats in particular the case of the Ginibre ensemble. It shows that
conditioning on the hole event, also leads to the formation of a singular
component on the boundary of the hole. However, in this case there
are no macroscopic restrictions outside the hole (see also \cite{majumdar2011many},
and Section \ref{sec:discussion} for a short discussion of the one-dimensional
case). Figures \ref{fig:GEFcondzeros} and \ref{fig:GinibreCond}
present a simulation of the hole event (with $r=13$) for the GEF
and the Ginibre ensemble, respectively. For more details about this
simulation, see Section \ref{sec:discussion}.

\medskip{}

\begin{tabular}{ccc}
\includegraphics[scale=0.32]{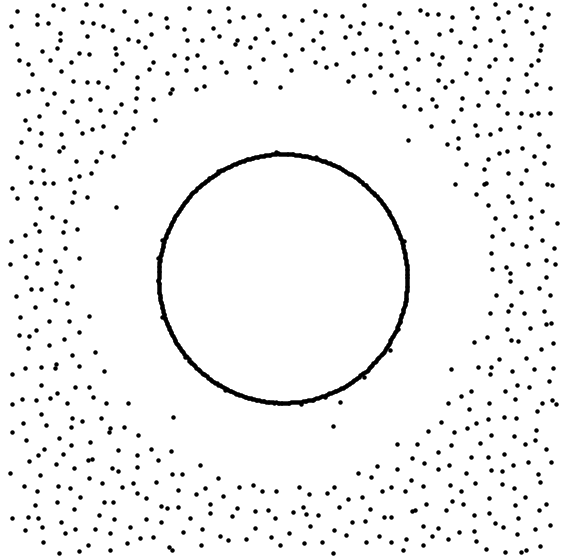} &  & \includegraphics[scale=0.32]{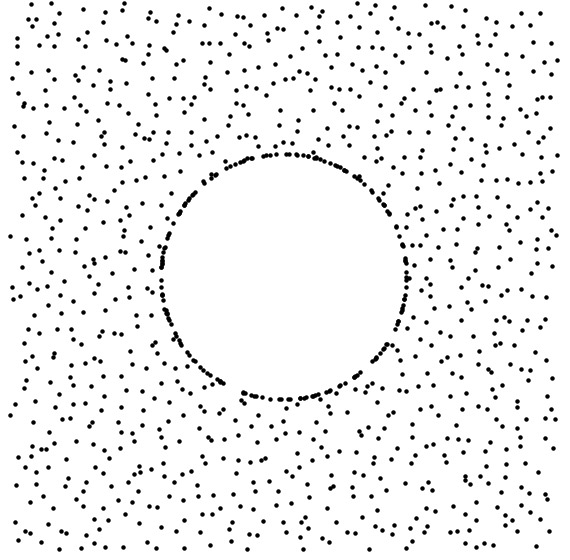}\tabularnewline
\begin{tabular}{c}
Figure 1.1\manuallabel{fig:GEFcondzeros}{1.1} - Zeros of GEF on hole
event\tabularnewline
The black ``circle'' are zeros of the GEF\tabularnewline
\end{tabular} &  & %
\begin{tabular}{c}
Figure 1.2\manuallabel{fig:GinibreCond}{1.2} - Ginibre ensemble on
hole event\tabularnewline
See Section \ref{sec:discussion} for details\tabularnewline
\end{tabular}\tabularnewline
\end{tabular}

\medskip{}
Our result for the hole is proved in a more general setting. Given
$p\ge0$, $p\ne1$, we will condition on the event that the number
of zeros in the disk $\left\{ \left|z\right|<r\right\} $ equals $\left\lfloor pr^{2}\right\rfloor $,
and study in details the conditional distribution of the zeros as
$r\to\infty$. The case $p=0$ corresponds to the hole, $p<1$ corresponds
to a ``deficit'' of zeros, while $p>1$ corresponds to an ``abundance''
of zeros (so called ``overcrowding''). To avoid unnecessary long
preliminaries, here we will only bring a special case of our results
pertaining to the case $p=0$.

For a compactly supported test-function $\varphi$, we put
\[
\linstats{\gef}{\varphi}r=\sum_{z\in\GEFzeroset}\varphi\left(\frac{z}{r}\right),
\]
where $\calZ$ is the random zero set of the GEF. The random variables
$\linstats{\gef}{\varphi}r$ are called \textit{linear statistics.}
In the special case where $\varphi$ is the indicator function of
the unit disk, the corresponding linear statistics is the radial zero
counting function. The classical Edelman-Kostlan formula (\cite[Sec. 2.4]{hough2009zeros})
gives the expected value
\[
\ex{\linstats{\gef}{\varphi}r}=\int_{\bbc}\varphi\left(\frac{w}{r}\right)\,\frac{\dd\lebmeas\left(w\right)}{\pi}=r^{2}\cdot\int_{\bbc}\varphi\left(w\right)\,\frac{\dd\lebmeas\left(w\right)}{\pi},
\]
where $\lebmeas$ is the Lebesgue measure on $\bbc$. We also put
\[
\dd\minmeasglob 0\left(z\right)=e\cdot\dd\subslebmeas{\left|z\right|=1}+\ind{\left\{ \left|w\right|\ge\sqrt{e}\right\} }z\cdot\frac{\dd\lebmeas\left(z\right)}{\pi},
\]
where $\subslebmeas{\left|z\right|=1}$ is the Lebesgue measure on
the unit circle normalized to be a probability measure. It is noteworthy
that the equilibrium measure of the zeros $\tfrac{1}{\pi}\dd\lebmeas$
is preserved outside the disk $\left\{ \left|z\right|<\sqrt{e}\right\} $,
while the total mass of the singular component equals the equilibrium
mass of this disk.

Let $\holeevent r$ denote the hole event, when there are no zeros
of $\gef$ in the disk $\left\{ \left|z\right|<r\right\} $. By $\condex{\cdot}{\holeevent r}$
(resp. $\condpr{\cdot}{\holeevent r}$) we denote the conditional
expectation (resp. probability) on $\holeevent r$. Our main result
is the following
\begin{thm}
\label{thm:conv_emp_meas_for_hole} Fix $\varphi\in C_{c}^{2}\left(\bbc\right)$
a twice continuously differentiable test function with compact support.
As $r\to\infty$,
\[
\condex{\linstats{\gef}{\varphi}r}{\holeevent r}=r^{2}\int_{\bbc}\varphi\left(w\right)\,\dd\minmeasglob 0\left(w\right)+O\left(r\log^{2}r\right).
\]
\end{thm}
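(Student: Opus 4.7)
My strategy is to reduce Theorem~\ref{thm:conv_emp_meas_for_hole} to a pointwise statement about $\condex{\log|\gef(rw)|}{\holeevent r}$ and then identify the limit via a constrained potential-theoretic variational problem. Since the zero-counting distribution of an entire function equals $\frac{1}{2\pi}\Delta\log|\gef|$, an integration by parts followed by the substitution $z=rw$ yields
\[
\linstats{\gef}{\varphi}r=\frac{1}{2\pi}\int_{\bbc}\log|\gef(rw)|\,(\Delta\varphi)(w)\,\dd\lebmeas(w).
\]
Because $\Delta\varphi$ has compact support, it suffices to prove that, uniformly for $w$ in a fixed compact set,
\[
\condex{\log|\gef(rw)|}{\holeevent r}=r^{2}\,V(w)+O(r\log^{2}r),
\]
for a function $V$ whose distributional Laplacian equals $2\pi\,\minmeasglob 0$; the theorem then follows by running the integration by parts in reverse.

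The function $V$ should arise as the logarithmic potential (up to harmonic corrections) of the minimizer of a weighted energy problem. The natural candidate is to minimize
\[
\iint\log\tfrac{1}{|z-\zeta|}\,d\nu(z)\,d\nu(\zeta)+\int|z|^{2}\,d\nu(z)
\]
over positive Radon measures $\nu$ on $\bbc$ supported in $\{|z|\ge 1\}$, with a normalization matching the asymptotic density $1/\pi$ of zeros of $\gef$. The associated Euler--Lagrange conditions --- an obstacle problem balancing logarithmic repulsion, quadratic confinement, and the support constraint --- identify the minimizer uniquely: it absorbs the ``missing'' equilibrium mass of the forbidden disk $\{|z|<\sqrt e\}$ (total $e=\int_{|z|\le\sqrt e}\dd\lebmeas/\pi$) onto the boundary circle $\{|z|=1\}$ and leaves $\dd\lebmeas/\pi$ untouched on $\{|z|\ge\sqrt e\}$. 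This is precisely $\minmeasglob 0$, and its potential matches the predicted limit of $r^{-2}\log|\gef(rw)|$ on the hole event.

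To turn this identification into the quantitative expectation estimate, I would combine matching exponential asymptotics for $\pr{\holeevent r}$ with a stability principle. The upper bound comes from Jensen's formula on disks of radii $r$ and $R\sim\sqrt e\,r$: forcing $\gef$ to have no zeros in $\{|z|<r\}$ constrains $\log|\gef|$ to be abnormally small on that annulus, and translating this constraint via the Gaussianity of the Taylor coefficients $\xi_k$ gives an upper bound on $\pr{\holeevent r}$ whose exponent matches the minimum of the variational functional. The matching lower bound would be obtained by an explicit trial event on the coefficients that places the zeros approximately according to $\minmeasglob 0$. A quantitative almost-minimality argument then shows that any realization contributing non-trivially to $\holeevent r$ must satisfy $r^{-2}\log|\gef(rw)|\approx V(w)$, with deviations one can hope to track.

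The principal obstacle is pushing the accuracy down to the sub-leading error $O(r\log^{2}r)$ stated, rather than merely $o(r^{2})$. This requires (i) a sharp quantitative stability theorem for the obstacle problem, bounding the deviation of a near-minimizer from $\minmeasglob 0$ in an appropriate norm; and (ii) a uniform quantitative control of $\log|\gef(rw)|$ precisely across the transition $\{|w|\approx 1\}$, where the limit measure is singular, and through the forbidden annulus $\{1<|w|<\sqrt e\}$, where both the low-degree polynomial part and the Taylor tail of $\gef$ contribute non-negligibly on $\holeevent r$. Handling the delicate interplay between this truncated-polynomial behavior and the precise effect of the conditioning is, in my view, the hardest step of the argument.
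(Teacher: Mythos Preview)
Your proposal contains a genuine gap at the heart of the argument: the variational problem you wrote down is the wrong one for the GEF. The functional
\[
\nu\;\longmapsto\;\iint\log\frac{1}{|z-\zeta|}\,\dd\nu(z)\,\dd\nu(\zeta)+\int|z|^{2}\,\dd\nu(z)
\]
is (up to normalization) the weighted energy $J_{\alpha}(\nu)=\int\frac{|w|^{2}}{\alpha}\,\dd\nu-\logenerg{\nu}$, which is the large-deviations rate function for the \emph{Ginibre} ensemble. Its constrained minimizer over measures supported in $\{|z|\ge1\}$ puts mass $1$ (not $e$) on the unit circle and equals $\dd\lebmeas/\pi$ on all of $\{|z|\ge1\}$: there is no forbidden annulus. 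So your assertion that the Euler--Lagrange conditions for this problem yield $\minmeasglob 0$ is incorrect; they yield precisely the Ginibre-type limit the paper contrasts with. The rate function governing GEF zeros (via the Zeitouni--Zelditch LDP for zeros of Gaussian polynomials) is instead
\[
\funccontparam{\alpha}{\nu}=2\sup_{w\in\bbc}\Bigl\{\logpot{\nu}w-\tfrac{|w|^{2}}{2\alpha}\Bigr\}-\logenerg{\nu},
\]
where the supremum replaces the integral against $\nu$. It is exactly this non-local, non-differentiable supremum term that produces the gap $\{1<|z|<\sqrt{e}\}$ in the constrained minimizer. Without this correction your scheme identifies the wrong limiting measure, and everything downstream collapses.

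Even with the correct functional in hand, your reduction to a \emph{pointwise} estimate $\condex{\log|\gef(rw)|}{\holeevent r}=r^{2}V(w)+O(r\log^{2}r)$ uniformly in $w$ is more than you need and likely more than is true with that uniformity: the limit potential $V$ has a kink at $|w|=1$ where $\minmeasglob 0$ is singular, and controlling $\log|\gef|$ there to additive accuracy $O(r\log^{2}r)$ is delicate. The paper sidesteps this entirely. It truncates $\gef$ to a polynomial of degree $N\asymp r^{2}\log r$, writes the explicit joint density of its zeros, and bounds that density from above by $\exp\bigl(-N^{2}[\funccontparam{\alpha}{\mu_{\vec z}^{t}}-\funccontparam{\alpha}{\mu_\alpha}+o(1)]\bigr)$ for the smoothed empirical measure $\mu_{\vec z}^{t}$. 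A quantitative convexity estimate for $\funccontparamname{\alpha}$ (if $|\int\varphi\,\dd\nu-\int\varphi\,\dd\minmeassup{0}{\alpha}|\ge x$ then $\funccontparam{\alpha}{\nu}-\funccontparam{\alpha}{\minmeassup{0}{\alpha}}\ge 2\pi x^{2}/\Dnorm{\varphi}$) then gives a conditional deviation bound for the linear statistic directly, from which the expectation estimate with error $O(r\log^{2}r)$ follows by integrating the tail. This route never needs uniform pointwise control of $\log|\gef|$ and packages the required stability as a single clean inequality on $\funccontparamname{\alpha}$.
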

Let us write $\cntmeas{\GEFzeroset}$ for the (random) counting measure
of $\calZ$. In addition, by $\condzeroset r$ we denote the zero
set conditioned on $\holeevent r$, and write $\cntmeas{\condzeroset r}$
for the corresponding counting measure. Recall that the space $\Radonmeas$
of Radon (positive, locally finite) measures on $\bbc$ can be endowed
with the vague topology (such that the mapping $\left(\nu,\phi\right)\mapsto\int\phi\,\dd\nu$
is continuous). The following corollary of Theorem \ref{thm:conv_emp_meas_for_hole}
describes the limiting distribution of the zeros (with appropriate
scaling), conditioned on the hole event for large $r$.
\begin{cor}
As $r\to\infty$, the scaled conditional zero counting measure $\frac{1}{r^{2}}\cntmeas{\condzeroset r}\left(\frac{\cdot}{r}\right)\to\minmeasglob 0$
in distribution, where the convergence is in the vague topology.
\end{cor}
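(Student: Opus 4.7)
The strategy is to deduce vague convergence in distribution of random measures from the first-moment asymptotics of Theorem~\ref{thm:conv_emp_meas_for_hole}, together with a conditional variance bound. Since the limit $\minmeasglob 0$ is deterministic, a standard random-measure criterion (Kallenberg) reduces the corollary to the statement that for every $\varphi\in C_c(\bbc)$,
\[
\frac{1}{r^{2}}\,\linstats{\gef}{\varphi}{r}\;\longrightarrow\;\int_{\bbc}\varphi\,\dd\minmeasglob 0\qquad\text{in probability under }\condpr{\cdot}{\holeevent r}.
\]

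First I would establish this convergence for $\varphi\in C_{c}^{2}(\bbc)$. Theorem~\ref{thm:conv_emp_meas_for_hole} already supplies the correct conditional expectation with error $O(r\log^{2}r)=o(r^{2})$, so by Chebyshev's inequality it suffices to prove that
\[
{\rm Var}\bigl[\linstats{\gef}{\varphi}{r}\,\big|\,\holeevent r\bigr]=o(r^{4}).
\]
Such a bound should be available either from conditional second-moment estimates developed elsewhere in the paper, or by a Kac--Rice computation combined with the same tilting/conditioning machinery used to prove Theorem~\ref{thm:conv_emp_meas_for_hole} (apply the theorem to $\varphi^{2}$-type truncations and to products of bumps centered at distinct points, and control the off-diagonal pair correlation). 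Given this variance estimate, Chebyshev gives convergence in probability of the scaled linear statistic for each $C_{c}^{2}$ test function.

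Next I would extend from $C_{c}^{2}$ to $C_{c}$ by approximation. Given $\varphi\in C_{c}(\bbc)$ with support in a compact set $K$, fix a slightly larger compact $K'\supset K$ and pick $\psi\in C_{c}^{2}(\bbc)$ supported in $K'$ with $\|\varphi-\psi\|_{\infty}<\veps$. Then
\[
\bigl|\linstats{\gef}{\varphi-\psi}{r}\bigr|\;\le\;\veps\cdot\#\bigl\{z\in\GEFzeroset:\,z/r\in K'\bigr\},
\]
and by Theorem~\ref{thm:conv_emp_meas_for_hole} applied to a smooth bump dominating $\mathbf{1}_{K'}$, together with Markov's inequality, the count on the right is $O(r^{2})$ with conditional probability tending to one. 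Sending $\veps\to 0$ after $r\to\infty$ transfers the convergence in probability from $C_{c}^{2}$ to all of $C_{c}$, and the Kallenberg criterion then yields the desired vague convergence in distribution.

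The main obstacle I expect is the conditional variance bound. Unconditionally the variance of smooth linear statistics of the GEF zero process is much smaller than the mean-squared, but conditioning on the rare event $\holeevent r$, whose probability is only $\exp(-\Theta(r^{4}))$, can in principle amplify fluctuations dramatically. Controlling this amplification is the quantitative price to be paid here, and I would aim to extract it from the same Hadamard-factorization and equilibrium-measure analysis that underlies Theorem~\ref{thm:conv_emp_meas_for_hole}, rather than developing a new argument from scratch.
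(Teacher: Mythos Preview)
Your overall strategy---reduce to convergence in probability of linear statistics, establish it first for $\varphi\in C_c^2(\bbc)$, then approximate to reach all of $C_c(\bbc)$---matches the paper's. The $C_c^2\to C_c$ step you describe is essentially identical to what the paper does in Section~\ref{sec:conv_emp_meas}.

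The difference is in the $C_c^2$ step. You propose to combine Theorem~\ref{thm:conv_emp_meas_for_hole} (first moment) with a separate conditional variance bound obtained via Kac--Rice or second-moment computations, and you correctly flag this variance bound as the main obstacle. But this detour is unnecessary: the paper already provides the exponential deviation inequality Theorem~\ref{thm:devs_in_emp_meas_hole_case} (stated just after Theorem~\ref{thm:few_zeros_in_the_gap}), namely
\[
\condpr{\left|\linstats{\gef}{\varphi}r-r^{2}\int_{\bbc}\varphi\,\dd\minmeasglob 0\right|\ge\lambda}{\holeevent r}\le\exp\left(-\frac{C}{\Dnorm{\varphi}}\lambda^{2}+C_{\varphi}r^{2}\log^{2}r\right).
\]
Taking $\lambda=\veps r^{2}$ gives convergence in probability for each fixed $\varphi\in C_c^2(\bbc)$ directly, with no appeal to a variance estimate. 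In fact Theorem~\ref{thm:conv_emp_meas_for_hole} itself is derived from this deviation bound (by the layer-cake formula), so your plan to deduce a variance bound from the first-moment theorem has the dependencies backwards. Your Kac--Rice suggestion would likely run into exactly the amplification problem you anticipate: conditioning on an event of probability $\exp(-\Theta(r^4))$ is not something a bare second-moment computation handles well, whereas the paper's large-deviations machinery is designed precisely for this regime.
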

Let $n_{\gef}\left(G\right)=\cntmeas{\GEFzeroset}\left(G\right)$
be the number of zeros of the GEF inside a domain $G\subset\bbc$.
Our second result gives a quantitative upper bound for the actual
number of zeros in the forbidden annulus, where the limiting conditional
expectation $\dd\minmeasglob 0$ vanishes.
\begin{thm}
\label{thm:few_zeros_in_the_gap}Suppose $r$ is sufficiently large,
$\varepsilon\in\left(r^{-2},1\right)$, that $\gamma\in\left(1+\frac{1}{2}\log\frac{1}{\varepsilon}\left(\log r\right)^{-1},2\right]$,
and consider the annulus
\[
A_{r,\varepsilon}=\setd{z\in\bbc}{r\left(1+\varepsilon\right)\le\left|z\right|\le\sqrt{e}r\left(1-\varepsilon\right)}.
\]
We have

\[
\condpr{n_{\gef}\left(A_{r,\varepsilon}\right)\ge r^{\gamma}}{\holeevent r}\le\exp\left(-C\varepsilon r^{2\gamma}\right),
\]
where $C>0$ is some numerical constant.
\end{thm}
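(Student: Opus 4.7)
The plan is to reduce the tail bound on $n_{\gef}(A_{r,\varepsilon})$ to a conditional large-deviation estimate for the averaged log-modulus of $\gef$ on three specific circles, and then to establish that large-deviation estimate via a Gaussian concentration argument exploiting the rigid structure imposed by the hole event.

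Set $\mathcal{L}(\rho)=\frac{1}{2\pi}\int_{0}^{2\pi}\log|\gef(\rho e^{i\theta})|\,\dd\theta$. Jensen's formula gives $\mathcal{L}(\rho)-\log|\gef(0)|=\int_{0}^{\rho}n_{\gef}(t)/t\,\dd t$, so $\mathcal{L}$ is convex in $\log\rho$ with a.e.\ slope $n_{\gef}(\rho)$. Comparing the mean-value slopes of $\mathcal{L}(\log\cdot)$ on the three intervals with endpoints $\log r,\log\rho_{1},\log\rho_{2},\log\rho_{3}$, where $\rho_{1}=r(1+\varepsilon)$, $\rho_{2}=\sqrt{e}r(1-\varepsilon)$, $\rho_{3}=\sqrt{e}r$, and using that on $\holeevent r$ we have $\mathcal{L}(r)=\log|\gef(0)|$ (since $\log|\gef|$ is harmonic on the hole), I would get the deterministic inequality
\[
n_{\gef}(A_{r,\varepsilon})\;\le\;\frac{\mathcal{L}(\rho_{3})-\mathcal{L}(\rho_{2})}{|\log(1-\varepsilon)|}\;-\;\frac{\mathcal{L}(\rho_{1})-\log|\gef(0)|}{\log(1+\varepsilon)}
\]
on $\holeevent r$. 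Since both denominators equal $\varepsilon+O(\varepsilon^{2})$, this reduces, up to an inessential constant factor, to $\varepsilon\,n_{\gef}(A_{r,\varepsilon})\le X$ with
\[
X\;:=\;[\mathcal{L}(\rho_{3})-\mathcal{L}(\rho_{2})]\;-\;[\mathcal{L}(\rho_{1})-\log|\gef(0)|].
\]

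A careful application of Theorem~\ref{thm:conv_emp_meas_for_hole} to smooth approximations of the indicators of $\{|z|<\rho_{i}\}$ shows that each bracketed difference has conditional expectation of order $er^{2}\varepsilon$, contributed entirely by the singular component of $\minmeasglob 0$ on $\{|w|=1\}$; these leading contributions cancel, leaving $\condex{X}{\holeevent r}$ much smaller than $\varepsilon r^{\gamma}$ under the stated lower bound on $\gamma$. Hence, on $\holeevent r$ the event $\{n_{\gef}(A_{r,\varepsilon})\ge r^{\gamma}\}$ forces a deviation $X\ge c\varepsilon r^{\gamma}$ for some numerical $c>0$, and the target bound becomes
\[
\condpr{X\ge c\varepsilon r^{\gamma}}{\holeevent r}\;\le\;\exp\left(-C\varepsilon r^{2\gamma}\right),
\]
which is precisely a Gaussian tail on the scale $\sqrt{\operatorname{Var}(X\mid\holeevent r)}\sim\sqrt{\varepsilon}$.

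The main obstacle is rigorously establishing this conditional $O(\varepsilon)$ variance bound on $X$. The cancellation structure built into $X$ is essential: both bracketed terms are integrals of $n_{\gef}(t)/t$ over short disjoint intervals of $\log\rho$-length $\approx\varepsilon$ with identical leading conditional means, so only the $\sqrt{\varepsilon}$-scale fluctuations remain after centering. However, the hole event has probability $e^{-\Theta(r^{4})}$ and imposes a singular deformation of the joint law of the Gaussian coefficients $\{\xi_{k}\}_{k\lesssim r^{2}}$, so the conditional variance cannot be read off unconditional estimates. I would leverage the techniques behind the proof of Theorem~\ref{thm:conv_emp_meas_for_hole} -- in particular, the description of the near-extremal Gaussian configurations underlying the hole, and the Fourier/covariance decomposition of $\log|\gef|$ in terms of the $\xi_{k}$ -- to control the conditional second moment while preserving the sharp $\varepsilon$-dependence, and then finish with a Chernoff-type exponential moment bound.
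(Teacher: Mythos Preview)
Your Jensen-formula reduction is clean and the deterministic inequality on $\holeevent r$ is correct, but the proposal stops precisely where the real work begins. You explicitly identify ``the main obstacle'' as proving the conditional Gaussian tail
\[
\condpr{X\ge c\varepsilon r^{\gamma}}{\holeevent r}\le\exp\left(-C\varepsilon r^{2\gamma}\right),
\]
and then offer only a heuristic: ``leverage the techniques behind the proof of Theorem~\ref{thm:conv_emp_meas_for_hole}'' together with a ``Chernoff-type exponential moment bound''. This is a genuine gap. Under $\holeevent r$, an event of probability $e^{-\Theta(r^{4})}$, the law of the coefficient vector $(\xi_{k})$ is no longer Gaussian in any usable sense, so neither Gaussian concentration nor an unconditional exponential-moment computation is available; and Theorem~\ref{thm:conv_emp_meas_for_hole} only controls conditional \emph{means} to accuracy $O(r\log^{2}r)$, which says nothing about the $\sqrt{\varepsilon}$-scale fluctuations you need. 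If you actually carry out ``the techniques behind Theorem~\ref{thm:conv_emp_meas_for_hole}'', you are driven to the polynomial truncation and the constrained energy functional $\funccontparamname{\alpha}$, which yields exactly the deviation inequality of Theorem~\ref{thm:devs_in_emp_meas_hole_case}, not a conditional variance bound.

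The paper's proof is considerably more direct and avoids your obstacle entirely. It does not pass through $\mathcal{L}(\rho)$ or conditional moments at all: one takes a smooth radial bump $\varphi_{\varepsilon}\in C_{c}^{2}(\bbc)$ equal to $1$ on $\{1+\varepsilon\le|w|\le\sqrt{e}(1-\varepsilon)\}$ and supported in $\{1<|w|<\sqrt{e}\}$, so that $\int\varphi_{\varepsilon}\,\dd\minmeasglob 0=0$ and $\Dnorm{\varphi_{\varepsilon}}=O(\varepsilon^{-1})$. Then $n_{\gef}(A_{r,\varepsilon})\le \linstats{\gef}{\varphi_{\varepsilon}}r$, and Theorem~\ref{thm:devs_in_emp_meas_hole_case} applied with $\lambda=r^{\gamma}$ gives
\[
\condpr{n_{\gef}(A_{r,\varepsilon})\ge r^{\gamma}}{\holeevent r}\le\exp\Bigl(-\tfrac{C}{\Dnorm{\varphi_{\varepsilon}}}\,r^{2\gamma}+C_{\varphi_{\varepsilon}}r^{2}\log^{2}r\Bigr)\le\exp\bigl(-C\varepsilon r^{2\gamma}\bigr),
\]
the condition $\gamma>1+\tfrac{1}{2}\log\tfrac{1}{\varepsilon}\,(\log r)^{-1}$ being exactly what makes the error term $C_{\varphi_{\varepsilon}}r^{2}\log^{2}r$ (with $C_{\varphi_{\varepsilon}}$ depending on $\varepsilon$ through $\omega(\varphi_{\varepsilon};t)=O(\varepsilon^{-1}t)$) absorbable. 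The $\varepsilon$ in the exponent comes transparently from $\Dnorm{\varphi_{\varepsilon}}^{-1}$, with no cancellation argument or conditional second-moment estimate needed.
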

In fact we can prove deviation bounds of similar nature for general
smooth linear statistics $\varphi\in C_{0}^{2}\left(\bbc\right)$,
see Theorem \ref{thm:devs_in_emp_meas_hole_case}, Section \ref{sec:proof_idea}
for the special case of conditioning on the hole event, and Theorem
\ref{thm:devs_in_emp_meas}, Section \ref{sec:conv_emp_meas} for
the general setting.
\begin{rem}
In particular, Theorem \ref{thm:few_zeros_in_the_gap} implies that
for every fixed $\varepsilon,\delta>0$ we have 
\[
\condex{n_{\gef}\left(A_{r,\varepsilon}\right)}{\holeevent r}=O\left(r^{1+\delta}\right).
\]
This should be compared with the (unconditional) expected number of
zeros
\[
\ex{n_{\gef}\left(A_{r,\varepsilon}\right)}=\left(e-1\right)r^{2}+O\left(\varepsilon r^{2}\right).
\]
\end{rem}
Our approach is based on precise estimates for the zero set of the
GEF via polynomial approximations. We obtain effective deviation bounds
for linear statistics of the zeros, which are inspired by a large
deviation principle (LDP) for zeros of Gaussian random polynomials
due to Zeitouni and Zelditch (\cite{zeitouni2010large}, similar LDPs
were previously obtained for eigenvalues of random matrices in \cite{BenArous1997,arous1998large,hiai1998maximizing}).
This approach enables us to reduce a problem on the distribution of
the zeros to the solution of a constrained optimization problem in
the space of probability measures. We develop a more precise version
of the LDP, which allows us to make the transition from polynomials
to entire functions (using results from complex analysis about the
variation of the zeros of an analytic function under analytic perturbations).
In addition, this allows us to control the error terms, leading in
particular to Theorem \ref{thm:few_zeros_in_the_gap}. A key difficulty
that arises in this program is that the constrained optimization problem
involves a non-standard, non-differentiable functional, which requires
the application of potential theoretic techniques.

As mentioned, the techniques of this paper can be effectively used
to study other properties of the GEF. We defer the statements of these
results to Section \ref{sec:upp_bnd_for_larg_flucs} and the discussion
to Section \ref{sec:discussion}.

\subsection*{Acknowledgements}

We learned about the question concerning the possible appearance of
a gap conditioned on the hole event from Fedor Nazarov and Mikhail
Sodin. We thank Amir Dembo and Ofer Zeitouni for suggesting that solutions
of constrained optimization problems on the space of probability measures,
are a possible way to approach the problems in this paper, for emphasizing
the relevance of the paper \cite{zeitouni2010large}, and for helpful
discussions. We thank Zakhar Kabluchko, Fedor Nazarov, and Mikhail
Sodin for very helpful discussions. We thank Zemer Kosloff, Joel Lebowitz,
and Ramon van Handel for helpful suggestions that led to an improved
presentation of the results. We thank the anonymous referee for their
meticulous reading of the manuscript and numerous corrections and
suggestions. The work of S. G. was supported in part by the ARO grant
W911NF-14-1-0094.

\subsection*{Notation and general remarks}

The letters $C$ and $c$ denote positive numerical constants, that
do not depend on $r$ and on $p$. The values of these constants are
not essential to the proof, and their value may vary from line to
line, or even within the same line. We denote by $A,B,C_{1},C_{2}$,
etc. constants that we keep fixed throughout the proof in which they
appear.

We write $\bbn=\left\{ 0,1,2,\dots\right\} $ and $\bbn^{+}=\left\{ 1,2,3,\dots\right\} $.
For a finite set $J$, we sometimes use $\#J$ to denote the size
of set (number of elements). We denote by $\disc ar$ the open disk
$\setd{z\in\bbc}{\left|z-a\right|<r}$. The letter $D$ stands for
the unit disk $\disc 01$. $\left\lfloor x\right\rfloor $ is the
integer part of a number $x\in\bbr$. For $a,b\in\bbr$, we write
$\binmax ab$ %
{} for the maximum %
{} of the two.

Let $f\left(r\right),g\left(r\right)$ be positive functions. The
notation $g\left(r\right)=O\left(f\left(r\right)\right)$ means there
is a constant $C=C\left(p\right)$ such that $g\left(r\right)\le Cf\left(r\right)$
for $r$ sufficiently large (possibly depending on $p$ and other
parameters). The notation $g\left(r\right)=o\left(f\left(r\right)\right)$
means $\frac{g\left(r\right)}{f\left(r\right)}\xrightarrow[r\to\infty]{}0$.

A function $\varphi:\bbc\mapsto\bbr$ admits $\omega\left(t\right):\left[0,\infty\right)\mapsto\left[0,\infty\right)$
as its modulus of continuity, if for all $z,w\in\bbc$,
\[
\left|\varphi\left(z\right)-\varphi\left(w\right)\right|\le\omega\left(\left|z-w\right|\right).
\]
By writing $\omega\left(\varphi;t\right)$ we mean a function which
is admitted as the modulus of continuity of $\varphi$. For example,
$\varphi$ is Hölder continuous if $\omega\left(\varphi;t\right)=C_{\varphi}t^{\alpha}$,
for some $C_{\varphi}>0$, $\alpha\in\left(0,1\right]$.

We write $\lebmeas$ for the Lebesgue measure on $\bbc$, while $\lebmeas_{\left|z-w\right|=t}$
is the Lebesgue measure on the circle $\left|z-w\right|=t$, normalized
to have mass $1$.

\subsubsection*{Analytic functions.}

Let $f$ be an entire function. We use the standard notation
\[
M_{f}\left(r\right)=\max\setd{\left|f\left(z\right)\right|}{\left|z\right|\le r},\quad r\ge0.
\]
We write $\zeroset f=\setd{z\in\bbc}{f\left(z\right)=0}$ for the
collection of zeros of $f$ (zero set). In principle, multiple zeros
appear as many times as their multiplicity (but in this paper all
zeros are simple). We denote by $\cntmeas{\zeroset f}$ the counting
measure of the zeros of $f$, that is for a domain $G$: 
\[
\cntmeas{\zeroset f}\left(G\right)=\#\setd{z\in G}{f\left(z\right)=0}.
\]
We write $n_{f}\left(r\right)$ for the number of zeros of the function
$f$ inside the closed disk $\overline{\disc 0r}=\left\{ \left|z\right|\le r\right\} $.
For the GEF $\gef$ we usually just write $M\left(r\right)=M_{\gef}\left(r\right)$
and $n\left(r\right)=n_{\gef}\left(r\right)$ (these are random variables).
Let $\varphi$ be a test function with compact support, the linear
statistics of $f$ with respect to (w.r.t.) $\varphi$ is given by
\[
\linstats f{\varphi}r=\sum_{z\in\zeroset f}\varphi\left(\frac{z}{r}\right),\quad r>0.
\]

\subsubsection*{Probability and negligible events.}

We denote events by $E,F,$ etc., by $E^{c}$ the complement of the
event $E$, and by $\biguplus E_{k}$ the disjoint union of the events
$E_{k}$. We write $\pr E$ for the probability of the event $E$
(the probability space will always be clear from the context). An
event $E=E\left(r\right)$ will be called \textit{negligible} with
respect to $F\left(p;r\right)=\left\{ n\left(r\right)\le pr^{2}\right\} $
($M\left(p;r\right)=\left\{ n\left(r\right)\ge pr^{2}\right\} $)
if $\pr E=o\left(\pr{F\left(p;r\right)}\right)$ (respectively, $\pr E=o\left(\pr{M\left(p;r\right)}\right)$).
In that case we have $\pr{F\left(p;r\right)}\le\pr{F\left(p;r\right)\cap E^{c}}+\pr E=\left(1+o\left(1\right)\right)\pr{F\left(p;r\right)\cap E^{c}}.$

If $X$ is a random variable, then $\ex X$ is its mean (expected
value), and $\var X$ is its variance (if they exist). We write $\left.X\right|_{F}$
to denote the random variable $X$ conditioned on the event $F$.
We have $\condex XF=\left(\pr F\right)^{-1}\cdot\ex{X\cdot\ind F{\cdot}}$,
where $\ind F{\cdot}$ is the indicator random variable of the event
$F$.

\subsubsection*{Measures.}

We consider mainly probability measures on the complex plane, which
we denote by $\probmeas$. Sometimes we consider Radon (locally finite)
measures. All the measures we work with are assumed to be Borel measures.
All sets are assumed to be Borel measurable. We denote collections
(or sets) of measures by $\calC,\calD$, etc.

\subsubsection*{Potential theory.}

Let $\mu\in\probmeas$. We write
\[
\logpot{\mu}z=\int_{\bbc}\log\left|z-w\right|\,\dd\mu\left(w\right),\quad\logenerg{\mu}=\int_{\bbc}\int_{\bbc}\log\left|z-w\right|\,\dd\mu\left(z\right)\dd\mu\left(w\right)=\int_{\bbc}\logpot{\mu}z\,\dd\mu\left(z\right),
\]
for the logarithmic potential and the logarithmic energy of the measure,
respectively. A measure is said to have finite logarithmic energy
if $\left|\logenerg{\mu}\right|<\infty$. Sometime we use the same
notation for the logarithmic potential and energy of signed measures
(with finite total variation). For more details, see Appendix \ref{sec:poten_theory}.

\section{\label{sec:proof_idea}Idea of the proof}

Recall the GEF is the Gaussian Entire function, given by the Taylor
series
\begin{equation}
\gef\left(z\right)=\sum_{k=0}^{\infty}\xi_{k}\frac{z^{k}}{\sqrt{k!}},\quad z\in\bbc,\label{eq:gef_Taylor_series}
\end{equation}
where $\left\{ \xi_{k}\right\} $ is a sequence of independent standard
complex Gaussians (i.e., the density of $\xi_{k}$ with respect to
$\lebmeas$, the Lebesgue measure on the complex plane $\bbc$, is
$\frac{1}{\pi}e^{-\left|z\right|^{2}}$). The hole event at radius
$r$, denoted $\holeevent r$, is the event where $\gef$ has no zeros
inside the disk $\disc 0r=\left\{ \left|z\right|<r\right\} $. Suppose
$\varphi\in C_{0}^{2}\left(\bbc\right)$ is a test function, which
is twice continuously differentiable and with compact support, and
define
\[
\Dnorm{\varphi}=\left\Vert \nabla\varphi\right\Vert _{L^{2}\left(\lebmeas\right)}^{2}=\int_{\bbc}\left(\varphi_{x}^{2}+\varphi_{y}^{2}\right)\,\dd m\left(z\right).
\]
Let $\linstats{\gef}{\varphi}r$ be the linear statistics associated
with $\varphi$, we would like to consider an event where the linear
statistics is far from the conditional limiting measure $\minmeasglob 0$.
Theorems \ref{thm:conv_emp_meas_for_hole} and \ref{thm:few_zeros_in_the_gap}
will be deduced from the following (conditional) deviation inequality.
\begin{thm}
\label{thm:devs_in_emp_meas_hole_case}Suppose $C^{\prime}>0$ is
fixed, and that $r$ is sufficiently large. For $\lambda\in\left(0,C^{\prime}r^{2}\right)$
we have, 
\[
\condpr{\left|\linstats{\gef}{\varphi}r-r^{2}\int_{\bbc}\varphi\left(w\right)\,\dd\minmeasglob 0\left(w\right)\right|\ge\lambda}{H_{r}}\le\exp\left(-\frac{C}{\Dnorm{\varphi}}\cdot\lambda^{2}+C_{\varphi}r^{2}\log^{2}r\right),
\]
where $C>0$ is a numerical constant and $C_{\varphi}>0$ is a constant
depending only on $\varphi$.
\end{thm}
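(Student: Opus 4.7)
The plan is to approximate $\gef$ by its Taylor polynomial $\truncpoly$ of degree $N$ slightly above $r^{2}$, and to establish a quantitative large deviation principle (LDP) for the $r$-scaled empirical measure of the zeros of $\truncpoly$, in the spirit of Zeitouni--Zelditch. For $\truncpoly$, the joint density of the coefficients transported to the zeros via the squared Vandermonde factor shows that the $r$-scaled empirical measure $\mu_{N}$ of the zeros has a density of the form $\exp\bigl(-N^{2}\funcenerg{\mu_{N}}+O(N\log N)\bigr)$, where $\funcenerg{\mu}=-\logenerg{\mu}+\int Q\,\dd\mu+\mathrm{const}$ is a logarithmic energy functional with a quadratic external field $Q(z)=|z|^{2}$. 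The hole event $\holeevent r$ corresponds to the constraint that $\mu_{N}$ assigns no mass to the open unit disk, and $\minmeasglob 0$ is the unique minimizer of $\funcenerg{\cdot}$ under this constraint; its singular piece on $\{|z|=\sqrt{e}\}$ and vanishing density throughout the forbidden annulus $\{1<|z|<\sqrt{e}\}$ are dictated by the Euler--Lagrange inequalities of this obstacle problem.

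\textbf{From LDP to the deviation bound.} Once matching upper and lower bounds for the LDP are proved with errors $\exp\bigl(O(N\log^{2}N)\bigr)$ in both directions, I would obtain, for any class $\calC$ of probability measures,
\[
\condpr{\mu_{N}\in\calC}{\holeevent r}\le\exp\Bigl(-N^{2}\bigl(\inf_{\mu\in\calC}\funcenerg{\mu}-\funcenerg{\minmeasglob 0}\bigr)+O(N\log^{2}N)\Bigr).
\]
Specializing $\calC=\bigl\{\mu:\bigl|\int\varphi\,\dd(\mu-\minmeasglob 0)\bigr|\ge\lambda/r^{2}\bigr\}$ reduces Theorem \ref{thm:devs_in_emp_meas_hole_case} to a quadratic lower bound on the excess energy. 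Set $\nu=\mu-\minmeasglob 0$, a signed measure of total mass zero. Convexity of $\funcenerg{\cdot}$ combined with the Euler--Lagrange conditions satisfied by $\minmeasglob 0$ give
\[
\funcenerg{\mu}-\funcenerg{\minmeasglob 0}\;\ge\;-\logenerg{\nu}\;=\;\frac{1}{2\pi}\int_{\bbc}\bigl|\nabla\logpot{\nu}z\bigr|^{2}\,\dd\lebmeas(z),
\]
using the standard potential-theoretic identity for signed measures of total mass zero. Integration by parts and Cauchy--Schwarz then yield
\[
\Bigl(\int\varphi\,\dd\nu\Bigr)^{2}=\Bigl(\frac{1}{2\pi}\int_{\bbc}\nabla\varphi\cdot\nabla\logpot{\nu}z\,\dd\lebmeas(z)\Bigr)^{2}\le\frac{1}{2\pi}\,\Dnorm{\varphi}\cdot(-\logenerg{\nu}),
\]
so $\funcenerg{\mu}-\funcenerg{\minmeasglob 0}\ge c\,(\lambda/r^{2})^{2}/\Dnorm{\varphi}$. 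Substituting $N\asymp r^{2}$ produces the dominant term $-C\lambda^{2}/\Dnorm{\varphi}$ in the exponent, while the $O(N\log^{2}N)$ correction gives exactly the additive $C_{\varphi}r^{2}\log^{2}r$.

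\textbf{Main obstacle.} The hard part is the quantitative two-sided LDP with the explicit $O(N\log^{2}N)$ error; three sources of difficulty interact. First, the constrained minimizer $\minmeasglob 0$ is non-smooth: its obstacle is active throughout the entire forbidden annulus, so perturbations $\mu$ placing mass inside this annulus cannot be treated by routine second-variation arguments and must be dealt with via balayage together with the potential-theoretic inequality above; this is the step that truly exploits the explicit structure of $\minmeasglob 0$. Second, the transition from $\truncpoly$ to $\gef$ requires tracking how the tail $\seriestail=\gef-\truncpoly$ perturbs the relevant zeros, via Rouch\'e- and Jensen-type distortion estimates on the high-probability event that $\sup_{|z|\le Cr}|\seriestail(z)|$ is moderately small, so that zero counts near the boundary of the hole shift by at most $O(r\log^{2}r)$. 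Third, the combinatorial and normalization-constant bookkeeping (the symmetrization factor $N!$, the Gaussian normalizations, and the $\veps$-discretization of the empirical measure tested against a $C^{2}$ function $\varphi$) must be matched in both directions of the LDP and contributes further $O(N\log^{2}N)$ terms. The first of these is the most delicate, because the non-differentiability of the constraint is precisely what rules out a classical second-variation argument and forces the potential-theoretic route described above.
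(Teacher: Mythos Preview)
Your proposal has a genuine gap: you are working with the wrong rate functional. The joint density of the zeros of the Gaussian polynomial $\truncscaledpoly$ is \emph{not} of the log-gas form $\exp(-N^{2}[-\logenerg{\mu}+\int Q\,\dd\mu])$; that formula governs Ginibre eigenvalues. For Gaussian polynomial zeros the change of variables (Lemma~\ref{lem:poly_change_of_vars}) gives instead
\[
f(\vec z)=\normalconst\,|\Delta(\vec z)|^{2}\Bigl(\int_{\bbc}|q_{\vec z}(w)|^{2}\,\dd\refmeas(w)\Bigr)^{-(N+1)},
\]
and the $L^{2}$-norm in the last factor is controlled by its supremum (Lemma~\ref{lem:bern_markov}), not by an integral against the empirical measure. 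At the exponential scale this produces the functional
\[
\funccontparam{\alpha}{\nu}=2\sup_{w\in\bbc}\Bigl\{\logpot{\nu}w-\tfrac{|w|^{2}}{2\alpha}\Bigr\}-\logenerg{\nu},
\]
which differs from $\funcenerg{\nu}$ precisely in that the linear external-field term $\int Q\,\dd\nu$ is replaced by a supremum. This distinction is the whole point: the constrained minimizer of $\funcenerg{\cdot}$ over $\{\nu(D)=0\}$ has \emph{no} forbidden annulus (that is exactly the Ginibre picture of \cite{JaLM}), so your approach cannot recover $\minmeasglob 0$ as the correct limit. Relatedly, the singular part of $\minmeasglob 0$ sits on $\{|z|=1\}$, the boundary of the hole, not on $\{|z|=\sqrt{e}\}$; the circle $\{|z|=\sqrt{e}\}$ is where the equilibrium density resumes.

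A second, related issue is your invocation of Euler--Lagrange conditions to obtain $\funcenerg{\mu}-\funcenerg{\minmeasglob 0}\ge-\logenerg{\mu-\minmeasglob 0}$. That argument works for $\funcenerg{\cdot}$ because the external field is linear in $\nu$, but $\funccontparamname{\alpha}$ is non-differentiable in $\nu$ (the $\sup$ term). The paper gets the analogous inequality (Claim~\ref{claim:log_energy_low_bnd}) by a variational-inequality characterization of the minimizer (Lemma~\ref{lem:variational_char}) rather than a first-variation computation. Once that inequality is in hand, your Cauchy--Schwarz step $\bigl(\int\varphi\,\dd\nu\bigr)^{2}\le(2\pi)^{-1}\Dnorm{\varphi}\,(-\logenerg{\nu})$ is exactly what the paper uses (Lemma~\ref{lem:lin_stats_for_meas}), and the bookkeeping is as you describe, except that one must take $N$ of order $r^{2}\log r$ (not merely slightly above $r^{2}$) to make the tail $\seriestail$ negligible on a disk of radius comparable to $r$.
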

A key ingredient in our proof is the fact that the zero set of the
random polynomial, obtained by truncating the Taylor series (\ref{eq:gef_Taylor_series})
at a large degree $N$ (which depends on $r$), serves as a good approximation
for the zero set of $\gef$. The advantage of working with the polynomial
is that one can write down a closed form expression for the joint
density of its zeros. We can then identify any given instance of such
a random zero set with the corresponding empirical measure (i.e.,
the probability measure with equal weights on the zeros). Another
key ingredient is that at the exponential scale, this joint density
can be further approximated by a certain functional that acts on the
empirical measure. For the problem at hand, we can focus our attention
on the behavior of this functional on an appropriate subset of empirical
measures, namely those that put no mass on the ``hole''.

\subsection{Truncation of the Taylor series}

Suppose $r>0$ is large. Let $\varphi$ be a test function supported
on the disk $\disc 0B$, where $B\ge1$ is fixed. Let $N\in\bbn$
be a large parameter (depending on $r$). We also introduce the large
parameter $L=r+O\left(\frac{1}{r}\right)$. We will work with the
scaled polynomials
\begin{equation}
\truncscaledpoly\left(z\right)=\sum_{k=0}^{N}\xi_{k}\frac{\left(Lz\right)^{k}}{\sqrt{k!}},\quad z\in\bbc.\label{eq:idea_trunc_poly}
\end{equation}
Roughly speaking, $L$ would correspond to the size of the hole, and
$N$ is the degree of polynomial truncation of $\gef$. As a matter
of fact, we will choose $N$ such that $\alpha\defeq Nr^{-2}$ is
of order $\log r$. In what follows, we denote by $\regevent$ an
event for which the (scaled) zeros of the polynomial $\truncscaledpoly$
serve as a good approximation for the zeros of $\gef$ inside the
disk $\disc 0{Br}$. We can choose $\regevent$ so that $\pr{\kregevent c}\le\exp\left(-Ar^{4}\right)$,
for some large constant $A>0$. For the details see Subsection \ref{subsec:upp_bnd_GEF_trunc}.

\subsection{The joint distribution of the zeros of $\protect\truncscaledpoly$}

Put $\dd\refmeas\left(w\right)=\frac{L^{2}}{\pi}e^{-L^{2}\left|w\right|^{2}}\,\dd m\left(w\right)$,
where $m$ is Lebesgue measure on $\bbc$ ($\refmeas$ is a probability
measure). Let us denote by $z_{1},\dots,z_{N}$ the zeros of the polynomial
$\truncscaledpoly$, and in addition write $\vec z=\left(z_{1},\dots,z_{N}\right)$.
Lemma \ref{lem:poly_change_of_vars} (in Appendix \ref{sec:joint_density})
shows that the joint probability density of the zeros (in uniform
random order), with respect to Lebesgue measure on $\bbc^{N}$, is
given by
\[
f\left(\vec z\right)=f\left(z_{1},\dots,z_{N}\right)=\normalconst\left|\Delta\left(\vec z\right)\right|^{2}\left(\int_{\bbc}\prod_{j=1}^{N}\left|q_{\vec z}\left(w\right)\right|^{2}\,\dd\refmeas\left(w\right)\right)^{-\left(N+1\right)},
\]
where 
\[
q_{\vec z}\left(w\right)=\prod_{j=1}^{N}\left(w-z_{j}\right),
\]
 $\left|\Delta\left(\vec z\right)\right|^{2}=\prod_{j\ne k}\left|z_{j}-z_{k}\right|$,
and $\normalconst$ is the normalization constant.

For a probability measure $\mu\in\probmeas$ we denote by $\logpot{\mu}z,$
$\logenerg{\mu}$ its logarithmic potential and logarithmic energy,
respectively (for the definitions we refer to the notation section
of the introduction). Let $\mu_{\vec z}=\frac{1}{N}\sum_{j=1}^{N}\delta_{z_{j}}$
be the empirical probability measure of the zeros. Instead of working
with the squared Vandermonde $\left|\Delta\left(\vec z\right)\right|^{2}$,
we would like to work with the logarithmic energy functional $\logenerg{\mu}$
. However, the latter is not well-defined for discrete measures. Thus,
it will be required to introduce the smoothed empirical measure $\mu_{\vec z}^{t}=\mu_{\vec z}\star\subslebmeas{\left|z\right|=t}$,
where $t=t\left(r\right)>0$ is a small parameter.

Consider the functional $\funccontparamname{\alpha}:\probmeas\to\left[0,\infty\right]$
given by
\[
\funccontparam{\alpha}{\nu}=2\sup_{w\in\bbc}\left\{ \logpot{\nu}w-\frac{\left|w\right|^{2}}{2\alpha}\right\} -\logenerg{\nu}.
\]
The uniform probability measure on the disk $\disc 0{\sqrt{\alpha}}$,
denoted by $\mu_{\alpha}$, is known to be the unique global minimizer
of $\funccontparamname{\alpha}$. In Section \ref{subsec:est_joint_dist_P_L}
we show that one can bound $f\left(\vec z\right)$ by
\[
\exp\left(-N^{2}\left[\funccontparam{\alpha}{\mu_{\vec z}^{t}}-\funccontparam{\alpha}{\mu_{\alpha}}+o\left(1\right)\right]\right).
\]
Let $Z\subset\bbc^{N}$ be a nice subset of possible `configurations'
of the zeros. Roughly speaking, for such $Z$, we show that
\begin{align}
\pr{\left\{ \vec z\in Z\right\} \cap\regevent} & =\int_{Z\cap\regevent}f\left(\vec z\right)\,\dd m\left(\vec z\right)\le\exp\left(-N^{2}\left[\inf_{\vec z\in Z}\funccontparam{\alpha}{\mu_{\vec z}^{t}}-\funccontparam{\alpha}{\mu_{\alpha}}+o\left(1\right)\right]\right).\label{eq:idea_upp_bnd_prob}
\end{align}
For the reader who is acquainted with the theory of large deviations,
this upper bound is similar in spirit to the large deviations upper
bound for empirical measures of random polynomials obtained in \cite{zeitouni2010large}.

\subsection{Conditioning on the hole event and a constrained optimization problem}

On the hole event $H_{r}$, there are no zeros of $\truncscaledpoly$
inside the disk $\left\{ \left|z\right|\le\left(1-\delta\right)\frac{r}{L}\right\} $
(for a small $\delta>0$, depending on $r$). The factor $1-\delta$
appears as a side-effect of the truncation of $\gef$. The factor
$L$ is the result of the scaling of the zeros. Choosing the parameter
$L$ slightly smaller than $r$, we see that $\mu_{\vec z}^{t}\left(D\right)=0$
for $t>0$ sufficiently small ($D$ is the unit disk). The upper bound
(\ref{eq:idea_upp_bnd_prob}) suggests that in order to bound the
probability of the hole event we should find the minimizer of $\funccontparamname{\alpha}$
over the set 
\[
\calH=\setd{\nu\in\probmeas}{\nu\left(D\right)=0}.
\]
Since the functional $\funccontparamname{\alpha}$ is lower semi-continuous
and strictly convex, this minimizer exists and is unique. If the value
of this minimizer happens to agree with the lower bound for the hole
probability, we can consider this minimizing measure to be (an appropriately
scaled limit of) the empirical measure of the most likely configuration
of zeros that gives rise to the hole event $H_{r}$.

To be somewhat more precise, we set $t=r^{-C_{2}}$, for some constant
$C_{2}\ge4$. Let us first obtain an upper bound for the probability
of the hole event. As suggested by the discussion above, we want to
consider the configurations in
\[
Z=\setd{\vec z\in\bbc^{N}}{\mu_{\vec z}^{t}\left(D\right)=0}.
\]
Since $\setd{\mu_{\vec z}^{t}}{\vec z\in Z}\subset\calH$, using (\ref{eq:idea_upp_bnd_prob})
we obtain the bound
\[
\pr{H_{r}\cap\regevent}\le\exp\left(-N^{2}\left[\inf_{\vec z\in\calH}\funccontparam{\alpha}{\mu_{\vec z}^{t}}-\funccontparam{\alpha}{\mu_{\alpha}}+o\left(1\right)\right]\right).
\]
In Section \ref{sec:sol_of_energy_prob} we find that the minimizer
of $\funccontparam{\alpha}{\nu}$ over the set $\calH$ is given by
\[
\dd\minmeassup 0{\alpha}\left(z\right)=\frac{e}{\alpha}\dd\subslebmeas{\left|z\right|=1}+\frac{1}{\alpha}\ind{\left\{ \sqrt{e}\le\left|z\right|\le\sqrt{\alpha}\right\} }z\cdot\frac{\dd\lebmeas\left(z\right)}{\pi}.
\]
A short calculation shows that
\begin{equation}
\exp\left(-N^{2}\left[\inf_{\nu\in\calH}\funccontparam{\alpha}{\nu}-\funccontparam{\alpha}{\mu_{\alpha}}\right]\right)=\exp\left(-N^{2}\left[\funccontparam{\alpha}{\minmeassup 0{\alpha}}-\funccontparam{\alpha}{\mu_{\alpha}}\right]\right)=\exp\left(-\frac{e^{2}}{4}r^{4}\right).\label{eq:min_val_of_const_prob}
\end{equation}
Since $\pr{\kregevent c}\le\exp\left(-Ar^{4}\right)$, for some large
constant $A>0$, we obtain the required bound for $\pr{H_{r}}$. See
Section \ref{sec:upp_bnd_for_larg_flucs} for a proof (for general
$p\ge0$, $p\ne1$).

\subsection{Large deviations for linear statistics}

Now we would like to consider configurations where the empirical measure
is `far' from the minimizing measure $\minmeassup 0{\alpha}$. Recall
$N=\alpha r^{2}$ and that $\alpha$ is a large parameter. It follows
that $\minmeasglob 0\left(\disc 0{\sqrt{\alpha}}\right)=\alpha$,
and thus
\begin{equation}
r^{2}\int_{\bbc}\varphi\left(w\right)\,\dd\minmeasglob 0\left(w\right)=N\cdot\int_{\bbc}\varphi\left(w\right)\,\dd\minmeassup 0{\alpha}\left(w\right).\label{eq:eq_for_Radon_and_trunc_meas}
\end{equation}
In addition, on the event $\regevent$, we have
\begin{eqnarray}
\int_{\bbc}\varphi\left(w\right)\,\dd\mu_{\vec z}^{t}\left(w\right) & = & N\cdot n_{\gef}\left(\varphi;r\right)\left(1+o\left(1\right)\right).\label{eq:idea_mu_z_t_lin_stats}
\end{eqnarray}
Consider now the event
\begin{eqnarray*}
L\left(0,\varphi,\lambda;r\right) & = & \left\{ \left|\linstats{\gef}{\varphi}r-r^{2}\int_{\bbc}\varphi\left(w\right)\,\dd\minmeasglob 0\left(w\right)\right|\ge\lambda\right\} .
\end{eqnarray*}
By (\ref{eq:eq_for_Radon_and_trunc_meas}) and (\ref{eq:idea_mu_z_t_lin_stats}),
on the event $L\left(0,\varphi,\lambda;r\right)\cap H_{r}\cap\regevent$
we have $\mu_{\vec z}^{t}\left(D\right)=0$, and
\begin{align}
\left|\int_{\bbc}\varphi\left(w\right)\,\dd\mu_{\vec z}^{t}\left(w\right)-\int_{\bbc}\varphi\left(w\right)\,\dd\minmeassup 0{\alpha}\left(w\right)\right| & \ge\frac{\lambda}{N}-\mbox{error terms}.\label{eq:idea_mu_z_t_prop_2}
\end{align}
Therefore, we now consider the configurations in
\[
Z^{\prime}=\setd{\vec z\in\bbc^{N}}{\mu_{\vec z}^{t}\left(D\right)=0\mbox{ and }\left|\int_{\bbc}\varphi\left(w\right)\,\dd\mu_{\vec z}^{t}\left(w\right)-\int_{\bbc}\varphi\left(w\right)\,\dd\minmeassup 0{\alpha}\left(w\right)\right|\ge\frac{\lambda}{N}\left(1+o\left(1\right)\right)}.
\]
In order to obtain a large deviation bound for the linear statistics,
we need some estimates for the convexity of the functional $\funccontparamname{\alpha}$.
By Claim \ref{claim:low_bnd_func_for_far_from_min_meas} we have for
any measure $\nu\in\calH$ which satisfies 
\[
\left|\int_{\bbc}\varphi\left(w\right)\,\dd\nu\left(w\right)-\int_{\bbc}\varphi\left(w\right)\,\dd\minmeassup 0{\alpha}\left(w\right)\right|\ge x,
\]
the following bound
\begin{equation}
\funccontparam{\alpha}{\nu}\ge\funccontparam{\alpha}{\minmeassup 0{\alpha}}+\frac{2\pi}{\Dnorm{\varphi}}\cdot x^{2}.\label{eq:idea_low_bnd_func_meas_far_from_opt}
\end{equation}
This implies that for $\vec z\in Z^{\prime}$ we have
\[
\funccontparam{\alpha}{\mu_{\vec z}^{t}}\ge\funccontparam{\alpha}{\minmeassup 0{\alpha}}+\frac{2\pi}{\Dnorm{\varphi}}\cdot\left(\frac{\lambda}{N}\right)^{2}-\mbox{error terms}.
\]
Finally, the bound (\ref{eq:idea_upp_bnd_prob}) (together with the
lower bound for $\pr{H_{r}}$, see Section \ref{sec:low_bnd_for_large_flucs})
gives
\begin{eqnarray*}
\pr{L\left(0,\varphi,\lambda;r\right)\cap H_{r}\cap\regevent} & \le & \pr{Z^{\prime}\cap\regevent}\le\exp\left(-N^{2}\left[\inf_{\vec z\in Z^{\prime}}\funccontparam{\alpha}{\mu_{\vec z}^{t}}-\funccontparam{\alpha}{\mu_{\alpha}}+o\left(1\right)\right]\right)\\
 & \le & \exp\left(-N^{2}\left[\frac{C}{\Dnorm{\varphi}}\cdot\left(\frac{\lambda}{N}\right)^{2}+\funccontparam{\alpha}{\minmeassup 0{\alpha}}-\funccontparam{\alpha}{\mu_{\alpha}}\right]+\mbox{error terms}\right)\\
 & \le & \pr{H_{r}}\exp\left(-\frac{C}{\Dnorm{\varphi}}\cdot\lambda^{2}+\mbox{error terms}\right),
\end{eqnarray*}
hence we obtain Theorem \ref{thm:devs_in_emp_meas_hole_case}. See
Subsection \ref{subsec:large_fluct_in_lin_stats} for the details.

\section{Preliminaries}

Throughout the paper we will use the following standard bounds for
the factorial
\begin{equation}
\left(\frac{k}{e}\right)^{k}\le k!\le3\sqrt{k}\left(\frac{k}{e}\right)^{k},\quad k\ge1.\label{eq:bounds_for_factorial}
\end{equation}
The lower bound follows immediately from the series expansion for
$e^{x}$, and the upper bound by induction and using the inequality
$\left(1+\frac{1}{k}\right)^{k+\frac{1}{2}}>e$. Recall that for a
standard Gaussian random variable $\xi$, we have $\left|\xi\right|^{2}\sim\exp\left(1\right)$,
i.e. $\pr{\left|\xi\right|\ge\lambda}=\exp\left(-\lambda^{2}\right)$.
In what follows, we frequently use the estimate
\[
\pr{\left|\xi\right|\le\lambda}\in\left[\frac{\lambda^{2}}{2},\lambda^{2}\right],\quad\lambda<1.
\]

\subsection{Estimates for the GEF}

When studying the distribution of the zeros of the GEF, it is usually
easier to work with a truncation of its Taylor series. We use simple
estimates for the `tail' of the series, to control the error that
is introduced by the truncation.

Let $\left\{ \xi_{k}\right\} _{k=0}^{\infty}$ be a sequence of i.i.d.
standard complex Gaussians. We need the following simple estimates.
\begin{lem}
Let $r>2$. We have
\[
\pr{\forall k\,\left|\xi_{k}\right|\le\sqrt{r^{6}+k}}\ge1-Ce^{-r^{6}}.
\]
\end{lem}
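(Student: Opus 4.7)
The plan is to use a simple union bound together with the exact tail formula for a standard complex Gaussian. Recall from the preamble that for a standard complex Gaussian $\xi$, the squared modulus $|\xi|^{2}$ is exponentially distributed with mean $1$, so $\pr{|\xi|\ge\lambda}=e^{-\lambda^{2}}$ for all $\lambda\ge 0$.

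First I would apply this to each individual coefficient: for each $k\ge 0$,
\[
\pr{|\xi_{k}|>\sqrt{r^{6}+k}}=\exp\left(-\left(r^{6}+k\right)\right)=e^{-r^{6}}e^{-k}.
\]
Then I would sum this over all $k\in\bbn$ via a union bound:
\[
\pr{\exists k\ge 0:\ |\xi_{k}|>\sqrt{r^{6}+k}}\le\sum_{k=0}^{\infty}e^{-r^{6}}e^{-k}=\frac{e^{-r^{6}}}{1-e^{-1}}.
\]
Taking complements and absorbing the constant $(1-e^{-1})^{-1}$ into $C$ yields the claim.

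There is essentially no obstacle here; the statement is engineered so that the shift $r^{6}$ gives the prefactor $e^{-r^{6}}$ while the increment $k$ inside the square root makes the tails summable (since $e^{-k}$ is the term of a convergent geometric series). The hypothesis $r>2$ is not used in the argument except to ensure that $r^{6}$ is large enough that the bound is nontrivial; the inequality in fact holds for all $r\ge 0$ with the same constant $C=(1-e^{-1})^{-1}$.
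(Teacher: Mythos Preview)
Your proof is correct. The paper takes a slightly different route: instead of a union bound, it uses the independence of the $\xi_k$ to write
\[
\pr{\forall k\,\left|\xi_{k}\right|\le\sqrt{r^{6}+k}}=\prod_{k=0}^{\infty}\left(1-e^{-(r^{6}+k)}\right),
\]
and then bounds the product via $\log(1-x)\ge -2x$ and $e^{-x}\ge 1-x$. Your union-bound argument is more elementary in that it does not use independence at all and avoids the log/exp manipulations; the paper's argument, on the other hand, gives an explicit constant $C=4$ and would extend more readily if one wanted sharper two-sided bounds on the product. For the purpose at hand, your approach is cleaner and entirely sufficient.
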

\begin{proof}
For all $k\ge0$, we have
\[
\pr{\left|\xi_{k}\right|\le\sqrt{r^{6}+k}}=1-\exp\left(-\left(r^{6}+k\right)\right).
\]
Therefore, using the independence of the $\xi_{k}$s
\begin{eqnarray*}
\pr{\forall k\,\left|\xi_{k}\right|\le\sqrt{r^{6}+k}} & = & \prod_{k=0}^{\infty}\left(1-\exp\left(-\left(r^{6}+k\right)\right)\right)=\exp\left(\sum_{k=0}^{\infty}\log\left[1-\exp\left(-\left(r^{6}+k\right)\right)\right]\right)\\
 & \ge & \exp\left(-2\cdot\sum_{k=0}^{\infty}\exp\left(-\left(r^{6}+k\right)\right)\right)\ge\exp\left(-4\cdot e^{-r^{6}}\right)\ge1-4\cdot e^{-r^{6}}.
\end{eqnarray*}
\end{proof}
\begin{lem}
\label{lem:up_bnd_sum_mod_xi_sq}Let $x\ge10$. For $N\in\bbn$,
\[
\pr{\sum_{k=0}^{N}\left|\xi_{k}\right|^{2}>x\left(N+1\right)}\le\exp\left(-\frac{Nx}{2}\right).
\]
\end{lem}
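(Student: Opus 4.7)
The plan is to use the standard Chernoff (exponential Markov) bound, exploiting the fact that $|\xi_k|^2$ is an exponential random variable of rate $1$ (since $\xi_k$ is a standard complex Gaussian, $|\xi_k|^2 \sim \mathrm{Exp}(1)$). Hence $S \defeq \sum_{k=0}^N |\xi_k|^2$ is a sum of $N+1$ i.i.d.\ $\mathrm{Exp}(1)$ variables, whose moment generating function is $\ex{e^{tS}} = (1-t)^{-(N+1)}$ for $t < 1$.

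First I would apply Markov's inequality to $e^{tS}$ at level $e^{t x (N+1)}$ to get
\[
\pr{S > x(N+1)} \le e^{-t x (N+1)} (1-t)^{-(N+1)}
= \exp\bigl((N+1)[-tx - \log(1-t)]\bigr)
\]
for every $t \in (0,1)$. Optimizing in $t$, I would set $t = 1 - 1/x$ (valid because $x \ge 10 > 1$), yielding $1 - t = 1/x$ and the bound
\[
\pr{S > x(N+1)} \le \exp\bigl((N+1)(1 - x + \log x)\bigr).
\]

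It then suffices to verify $(N+1)(x - 1 - \log x) \ge \tfrac{Nx}{2}$ for all $x \ge 10$ and $N \ge 0$. Since $\frac{N}{N+1} \le 1$, it is enough to check $x - 1 - \log x \ge \tfrac{x}{2}$, i.e.\ $\tfrac{x}{2} \ge 1 + \log x$. The function $g(x) = \tfrac{x}{2} - 1 - \log x$ satisfies $g'(x) = \tfrac{1}{2} - \tfrac{1}{x} > 0$ for $x > 2$, and $g(10) = 4 - \log 10 > 0$; so $g(x) \ge 0$ for all $x \ge 10$, which gives the claimed bound $\exp(-Nx/2)$.

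There is no real obstacle here: the whole argument is a one-line exponential Chernoff estimate followed by an elementary numerical verification that the constant $10$ is large enough to absorb the $\log x$ term against $x/2$. The only point to watch is choosing the Chernoff parameter $t$ close enough to $1$ so that the linear term $-tx$ dominates the singularity of $-\log(1-t)$, which is exactly what $t = 1 - 1/x$ achieves.
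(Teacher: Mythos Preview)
Your proof is correct and follows essentially the same route as the paper: exponential Markov (Chernoff) bound with the MGF $(1-t)^{-(N+1)}$, the same optimal choice $t = 1 - 1/x$, and the same resulting bound $(xe^{1-x})^{N+1}$, then a numerical check that $x \ge 10$ suffices to absorb this into $\exp(-Nx/2)$. The paper's argument is identical; your verification of the final inequality is slightly more explicit.
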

\begin{proof}
Let $t\in\left(0,1\right)$. Using Markov's inequality and the independence
of the $\xi_{k}$s, we get
\begin{eqnarray*}
\pr{\sum_{k=0}^{N}\left|\xi_{k}\right|^{2}>x\left(N+1\right)} & = & \pr{\exp\left(t\cdot\sum_{k=0}^{N}\left|\xi_{k}\right|^{2}\right)>e^{tx\left(N+1\right)}}\le e^{-tx\left(N+1\right)}\ex{\exp\left(t\cdot\sum_{k=0}^{N}\left|\xi_{k}\right|^{2}\right)}\\
 & = & e^{-tx\left(N+1\right)}\left(\ex{e^{t\left|\xi\right|^{2}}}\right)^{N+1},
\end{eqnarray*}
where $\xi$ is a standard complex Gaussian. Using the fact $\ex{e^{t\left|\xi\right|^{2}}}=\frac{1}{1-t}$,
and taking $t=\frac{x-1}{x}$, we then have
\[
\pr{\sum_{k=0}^{N}\left|\xi_{k}\right|^{2}>x\left(N+1\right)}\le\left(xe^{1-x}\right)^{N+1}\le\exp\left(-\frac{1}{2}Nx\right),
\]
since $x\ge10$.
\end{proof}
For $N\in\bbn$ define the tail of the GEF to be the series
\[
\seriestail\left(z\right)=\sum_{k=N+1}^{\infty}\xi_{k}\frac{z^{k}}{\sqrt{k!}},\quad z\in\bbc.
\]

\begin{lem}
\label{lem:tail_bound}Let $r>0$ be sufficiently large, $\lambda>4$,
and $B\in\left[0,\frac{\sqrt{\lambda}}{2}\right]$. Outside an exceptional
event of probability at most $\exp\left(-Cr^{6}\right)$, we have
for any $N\in\bbn$, such that $N\ge\lambda r^{2}$, 
\[
\left|\seriestail\left(z\right)\right|\le\exp\left(\frac{N}{2}\log\left(\frac{4B^{2}}{\lambda}\right)\right),\quad\left|z\right|\le Br.
\]
\end{lem}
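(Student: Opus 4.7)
The plan is to work outside the exceptional event furnished by the preceding lemma, where $|\xi_{k}| \le \sqrt{r^{6}+k}$ for every $k$, and to estimate the tail series term by term using Stirling's inequality. On that event, for $|z| \le Br$ the triangle inequality gives
\[
|\seriestail(z)| \le \sum_{k=N+1}^{\infty}\sqrt{r^{6}+k}\cdot\frac{(Br)^{k}}{\sqrt{k!}},
\]
and applying the lower factorial bound $k!\ge(k/e)^{k}$ from (\ref{eq:bounds_for_factorial}), each factor $(Br)^{k}/\sqrt{k!}$ is at most $a_{k}:=(eB^{2}r^{2}/k)^{k/2}$.

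Next I would study the sequence $a_{k}$ on the range $k\ge N+1$. A direct computation of $\partial_{k}\log a_{k}=\tfrac{1}{2}\log(B^{2}r^{2}/k)$ shows that $a_{k}$ is decreasing once $k>B^{2}r^{2}$, a condition comfortably satisfied throughout the summation since $N\ge\lambda r^{2}\ge 4B^{2}r^{2}$ (using $B^{2}\le\lambda/4$). Moreover, the consecutive ratio satisfies
\[
\frac{a_{k+1}}{a_{k}}\le\sqrt{\frac{eB^{2}r^{2}}{k+1}}\le\sqrt{\frac{eB^{2}}{\lambda}}\le\frac{\sqrt{e}}{2}<1
\]
for $k\ge\lambda r^{2}$. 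Consequently the series $\sum_{k\ge N+1}\sqrt{r^{6}+k}\cdot a_{k}$ is dominated by a convergent geometric-type tail with polynomial weights $\sqrt{r^{6}+k}\le r^{3}+\sqrt{k}$, and summation yields
\[
|\seriestail(z)|\le P(r,N)\cdot a_{N+1}\le P(r,N)\cdot(eB^{2}/\lambda)^{N/2},
\]
for some polynomial $P(r,N)$ of modest degree, using $a_{N+1}\le(eB^{2}r^{2}/(N+1))^{(N+1)/2}\le(eB^{2}/\lambda)^{N/2}$.

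The final step, which is where the statement's gap between $e$ and $4$ comes in, is to absorb $P(r,N)$ by upgrading the base from $eB^{2}/\lambda$ to $4B^{2}/\lambda$. Since $\log(4/e)>0$ and $N\ge\lambda r^{2}$, one has $(4/e)^{N/2}\ge\exp(c\lambda r^{2})$ for a positive numerical constant $c$, which for $r$ sufficiently large dominates every polynomial in $r$ and $N$. Hence
\[
P(r,N)\cdot(eB^{2}/\lambda)^{N/2}\le(4B^{2}/\lambda)^{N/2}=\exp\!\left(\tfrac{N}{2}\log\tfrac{4B^{2}}{\lambda}\right),
\]
giving the desired estimate. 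The only step that calls for some care is ensuring the bound is \emph{uniform} in $N\ge\lambda r^{2}$ rather than fixed $N$, but the comparison $P(r,N)\le(4/e)^{N/2}$ uses only the single inequality $N\ge\lambda r^{2}$ and therefore transfers without difficulty to the whole admissible range; this is the main (minor) obstacle and the reason why the crude factorial bound $k!\ge(k/e)^{k}$ suffices despite the cleaner asymptotic rate $k!\sim(k/e)^{k}\sqrt{2\pi k}$.
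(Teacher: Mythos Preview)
Your proof is correct and follows essentially the same approach as the paper's: both work on the event $\{|\xi_k|\le\sqrt{r^6+k}\ \forall k\}$, bound the tail term-by-term via the factorial estimate $k!\ge(k/e)^k$, control the resulting series by a geometric ratio bound valid for $k\ge\lambda r^2$, and then absorb the leftover polynomial factor into the slack between the constants $e$ and $4$. The only cosmetic difference is that the paper bundles the weight $\sqrt{r^6+k}$ into its sequence $d_k=\sqrt{r^6+k}\,r^k/\sqrt{k!}$ (computing $d_{k+1}/d_k\le\sqrt{2/\lambda}$ directly), whereas you separate out $\sqrt{r^6+k}$ and treat it as an external polynomial weight; both routes arrive at the same bound with equal effort.
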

\begin{proof}
By the previous lemma, after discarding an exceptional event, we may
assume $\left|\xi_{k}\right|\le\sqrt{r^{6}+k}$ for all $k\in\bbn$.
Let us write
\[
d_{k}=\sqrt{r^{6}+k}\cdot\frac{r^{k}}{\sqrt{k!}},\quad k\in\bbn.
\]
Then, for $k\ge N$,
\[
\frac{d_{k+1}}{d_{k}}=\sqrt{1+\frac{1}{r^{6}+k}}\cdot\frac{r}{\sqrt{k+1}}\le\sqrt{\frac{2}{\lambda}}.
\]
Therefore, for $\left|z\right|\le Br$, we have
\[
\left|\seriestail\left(z\right)\right|\le\sum_{k=N+1}^{\infty}\left|\xi_{k}\right|\frac{B^{k}r^{k}}{\sqrt{k!}}=\sum_{k=N+1}^{\infty}B^{k}\cdot d_{k}\le B^{N+1}d_{N+1}\sum_{k=0}^{\infty}\left(B\sqrt{\frac{2}{\lambda}}\right)^{k}\le C\cdot B^{N+1}\cdot d_{N+1}.
\]
Using (\ref{eq:bounds_for_factorial}),
\[
d_{N+1}=\sqrt{r^{6}+N+1}\cdot\frac{r^{N+1}}{\sqrt{\left(N+1\right)!}}\le C\sqrt{r^{6}+N+1}\left(\frac{er^{2}}{N+1}\right)^{\frac{N+1}{2}}\le CN^{2}\left(\frac{3}{\lambda}\right)^{\frac{N+1}{2}}\le\left(\frac{7}{2\lambda}\right)^{\frac{N}{2}},
\]
since $CN^{2}\le\left(\frac{7}{6}\right)^{\frac{N}{2}}$, for $N$
sufficiently large. Similarly $CB\le N\le\left(\frac{8}{7}\right)^{N/2}$
for $N$ sufficiently large, hence the required estimate is obtained.
\end{proof}
For the upper bound estimate we need an `a priori' bound for the number
of zeros of the GEF inside a disk. The following estimate follows
immediately from \cite[Theorem 3]{krishnapur2006overcrowding}.
\begin{cor}
\label{cor:n(r)_apriori_bound}Let $r>0$ be large enough. We have
\[
\pr{n_{F}\left(r\right)\ge r^{3}}\le\exp\left(-r^{6}\right).
\]
\end{cor}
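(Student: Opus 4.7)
The plan is to derive the estimate from Jensen's formula combined with the tail controls on the Gaussian coefficients furnished by the preceding lemmas; this is essentially the content of \cite[Theorem~3]{Kr} that the authors cite.

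\textbf{Step 1 (upper bound on the growth of $\gef$).} The Gaussian tail lemma above implies that, outside an event of probability at most $Ce^{-r^{6}}$, every coefficient satisfies $\left|\xi_{k}\right|\le\sqrt{r^{6}+k}$. Substituting into the Taylor series (\ref{eq:gef_Taylor_series}) and using the factorial bound (\ref{eq:bounds_for_factorial}), the individual terms $|\xi_{k}|(2r)^{k}/\sqrt{k!}$ are maximized near $k\sim 4er^{2}$ and the series is dominated by its largest term (up to a polynomial factor). This yields $\log M(2r)\le Cr^{2}$ on the good event.

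\textbf{Step 2 (lower bound at a well-chosen base point).} One needs a bound of the form $|\gef(z_{\star})|\ge e^{-Cr^{3}}$ at some $z_{\star}\in\overline{\disc 01}$, with probability at least $1-e^{-r^{6}}$. The naive choice $z_{\star}=0$ is inadequate, since $\pr{|\xi_{0}|\le e^{-r^{3}}}$ is of order $e^{-2r^{3}}$, far larger than the target. Instead, one takes $z_{\star}$ to be a maximizer of $|\gef|$ on $\overline{\disc 01}$. If $\max_{|z|\le 1}|\gef(z)|\le e^{-t}$, then Cauchy's integral formula forces $|\xi_{k}|\le e^{-t}\sqrt{k!}$ \emph{simultaneously} for every $k$. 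The joint Gaussian small-ball probability of these constraints, taken over the $k$ for which the inequality is nontrivial (namely $k\lesssim t/\log t$ by Stirling), decays like $\exp(-ct^{2}/\log t)$. Choosing $t$ of order $r^{3}$ and absorbing the logarithmic factor into a slightly larger power gives the required $e^{-r^{6}}$ bound.

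\textbf{Step 3 (assembling via Jensen).} On the good event from Steps~1--2, apply Jensen's formula to $\gef$ centered at $z_{\star}$; combined with the inclusion $\disc 0r\subset\disc{z_{\star}}{r+1}$ (since $|z_{\star}|\le 1$), this gives
\[
n(r)\log 2\,\le\,\log M(er+C)-\log|\gef(z_{\star})|\,\le\,Cr^{2}+Cr^{3},
\]
so that $n(r)<r^{3}$ for $r$ sufficiently large. The total exceptional probability is at most $2Ce^{-r^{6}}$, which is $\le e^{-r^{6}}$ after re-tuning constants and taking $r$ large.

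The principal obstacle is Step~2: point-wise lower bounds on $|\gef|$ at a single point have Gaussian small-ball tails of order $e^{-2r^{3}}$, which is nowhere near strong enough, and one must genuinely use the rigidity coming from the observation that uniform smallness of $\gef$ on a disk forces \emph{many} coefficients to be simultaneously small, which is a far rarer event.
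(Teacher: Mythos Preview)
Your three-step plan is the natural one, but Step~2 has a quantitative gap you paper over with the phrase ``absorbing the logarithmic factor into a slightly larger power.'' The small-ball bound you derive for $\{\max_{|z|\le1}|\gef(z)|\le e^{-t}\}$ is indeed of order $\exp(-ct^{2}/\log t)$. Step~3, however, forces $t\lesssim r^{3}$ (otherwise Jensen only gives $n(r)\lesssim t$, not $n(r)<r^{3}$), and with such $t$ the probability bound is only $\exp(-cr^{6}/\log r)$, strictly larger than $e^{-r^{6}}$ for large~$r$. No choice of~$t$ rescues both steps simultaneously. You can mitigate the loss by taking the base-point disk in Step~2 to have radius~$\rho$ rather than~$1$ and invoking the paper's own Lemma~\ref{lem:max_gef_low_bnd}, which gives the cleaner estimate $\pr{M_{\gef}(\rho)\le e^{-x}}\le e^{-2x\rho^{2}}$; optimizing over~$\rho$ (the right scale turns out to be $\rho\sim r^{3/2}$) then yields $\pr{n(r)\ge r^{3}}\le\exp(-cr^{6})$, but a short computation shows the constant~$c$ obtainable this way is well below~$1$, so the Corollary as stated still does not follow.

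The paper itself does not prove the Corollary; it cites \cite[Theorem~3]{Kr}, whose bound carries an \emph{extra} factor $\log(m/r^{2})$ in the exponent (schematically $\pr{n(r)\ge m}\lesssim\exp\bigl(-cm^{2}\log(m/r^{2})\bigr)$ for $m\gg r^{2}$), so that with $m=r^{3}$ one gets exponent of order $-r^{6}\log r\ll -r^{6}$. That extra logarithm is precisely what the Jensen-plus-single-base-point approach cannot produce. It is worth noting, though, that for every application in this paper the Corollary is used only as a crude a~priori input to the regular event $\regevent$ (Lemma~\ref{lem:reg_event}), and any estimate of the form $\pr{n(r)\ge r^{3}}\le\exp(-cr^{6})$, or even $\exp(-cr^{6}/\log r)$, would serve equally well there after harmless adjustments to the constants downstream.
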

Let $M_{\gef}\left(\rho\right)=\max\setd{\left|\gef\left(z\right)\right|}{\left|z\right|\le\rho}$.
We want to avoid the event where the GEF is very small (in absolute
value) inside a large disk. We use the following simple estimate (cf.
with the more accurate \cite[Lemma 7]{nishry2010asymptotics}).
\begin{lem}
\label{lem:max_gef_low_bnd} Let $x>0$. For $\rho>1$ we have
\[
\pr{M_{\gef}\left(\rho\right)\le\exp\left(-x\right)}\le\exp\left(-2x\rho^{2}\right).
\]
\end{lem}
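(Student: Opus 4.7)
The plan is to combine Cauchy's estimate for Taylor coefficients with the independence of the Gaussian coefficients $\xi_{k}$. The key observation will be that a small value of $M_{\gef}(\rho)$ forces simultaneously many of the $\xi_{k}$ to be small, after which I can harness independence to amplify the individual bounds.

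First I would invoke Cauchy's inequality on the Taylor series (\ref{eq:gef_Taylor_series}): for every $k\ge 0$,
\[
\frac{|\xi_{k}|\,\rho^{k}}{\sqrt{k!}} \;\le\; M_{\gef}(\rho).
\]
Consequently, on the event $\{M_{\gef}(\rho)\le e^{-x}\}$ we have $|\xi_{k}|\le e^{-x}\sqrt{k!}\,\rho^{-k}$ for every index $k$. Next, I would restrict attention to those $k$ for which the deterministic factor $\sqrt{k!}\,\rho^{-k}$ is at most $1$, so that the resulting constraint on $\xi_{k}$ is genuine. Using only the crude bound $k!\le k^{k}$, this is automatic as soon as $k\le \rho^{2}$; in particular, for every $k\in\{0,1,\dots,\lfloor\rho^{2}\rfloor\}$ the event forces $|\xi_{k}|\le e^{-x}$.

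Finally, I would apply the standard complex-Gaussian estimate $\pr{|\xi_{k}|\le\lambda}=1-e^{-\lambda^{2}}\le \lambda^{2}$ (valid for $\lambda\le 1$), together with the independence of the $\xi_{k}$'s, to conclude
\[
\pr{M_{\gef}(\rho)\le e^{-x}} \;\le\; \prod_{k=0}^{\lfloor \rho^{2}\rfloor} \pr{|\xi_{k}|\le e^{-x}} \;\le\; \bigl(e^{-2x}\bigr)^{\lfloor \rho^{2}\rfloor+1} \;\le\; e^{-2x\rho^{2}},
\]
where the last step uses $\lfloor\rho^{2}\rfloor+1>\rho^{2}$.

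I do not expect any serious obstacle here; the entire argument is a two-line computation once the right indices are identified. The only point demanding a moment's thought is to use \emph{all} coefficients up to index $\lfloor\rho^{2}\rfloor$ rather than only $\xi_{0}$: the factor $\rho^{2}$ in the exponent of the bound comes entirely from the independence of these $\lfloor\rho^{2}\rfloor+1$ Gaussians, and this is the reason a very weak Cauchy estimate is enough to recover the (essentially sharp) $\exp(-2x\rho^{2})$ decay rate.
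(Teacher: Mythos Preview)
The proposal is correct and follows essentially the same route as the paper: Cauchy's estimate forces $|\xi_k|\le e^{-x}$ for all $k\le\lfloor\rho^2\rfloor$, and independence of the Gaussians yields the bound. The only cosmetic difference is that the paper justifies $\rho^k/\sqrt{k!}\ge 1$ via monotonicity of the sequence, whereas you use the bound $k!\le k^k$; both are equally valid.
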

\begin{proof}
Assuming $M_{\gef}\left(r\right)\le\exp\left(-x\right)<1$ and using
Cauchy's estimate for the coefficients of $\gef$ we find that
\[
\left|\xi_{k}\right|\frac{\rho^{k}}{\sqrt{k!}}\le M_{\gef}\left(\rho\right)\le\exp\left(-x\right),\qquad\forall k\in\bbn.
\]
Notice that the sequence $\frac{\rho^{k}}{\sqrt{k!}}$ is increasing
from $k=0$ to $k=\left\lfloor \rho^{2}\right\rfloor $. Therefore,
we get
\[
\left|\xi_{k}\right|\le\exp\left(-x\right),\quad k\in\left\{ 0,\dots,\left\lfloor \rho^{2}\right\rfloor \right\} ,
\]
and the probability of this event is at most $\exp\left(-2x\left(\left\lfloor \rho^{2}\right\rfloor +1\right)\right)\le\exp\left(-2x\rho^{2}\right)$.
\end{proof}
We also want to control the probability the GEF is too large inside
a large disk (this is a very rare event).
\begin{lem}[{See \cite[Lemma 1]{sodin2005random}}]
\label{lem:max_gef_upp_bnd}  For $\rho>0$ large enough, we have
\[
\pr{M_{\gef}\left(\rho\right)\ge\exp\left(\rho^{2}\right)}\le\exp\left(-\exp\left(\rho^{2}\right)\right).
\]
\end{lem}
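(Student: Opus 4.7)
The plan is to bound $M_\gef(\rho)$ by combining the triangle inequality on the Taylor series with a uniform high-probability bound on the Gaussian coefficients $\xi_k$. For any $z$ with $|z|\le\rho$, the triangle inequality immediately gives
\[
|\gef(z)| \le \sum_{k=0}^\infty |\xi_k|\,\rho^k/\sqrt{k!},
\]
which reduces the task to estimating the right-hand side under a suitable uniform control of $|\xi_k|$.

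I would then introduce the ``good event'' $G = \{|\xi_k| \le \lambda_k\text{ for all } k\ge 0\}$ with, say, $\lambda_k^2 = \exp(\rho^2)+2k$. Since $\pr{|\xi_k|>t} = \exp(-t^2)$ and the $\xi_k$'s are independent, a union bound gives
\[
\pr{G^c} \le \sum_{k\ge 0}\exp(-\exp(\rho^2)-2k) = \frac{\exp(-\exp(\rho^2))}{1-e^{-2}},
\]
which is at most $\exp(-\exp(\rho^2))$ for $\rho$ sufficiently large (after a mild adjustment of the constants in the definition of $\lambda_k$). On $G$, splitting $\lambda_k \le \exp(\rho^2/2)+\sqrt{2k}$,
\[
M_\gef(\rho) \le \exp(\rho^2/2)\sum_k\rho^k/\sqrt{k!} + \sqrt{2}\sum_k\sqrt{k}\,\rho^k/\sqrt{k!}.
\]

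The two remaining deterministic sums can be evaluated by the Laplace (saddle-point) method using the Stirling bounds of~(\ref{eq:bounds_for_factorial}). The summand $\rho^k/\sqrt{k!}$, viewed as a function of $k$, concentrates near $k=\rho^2$, where its peak value is of order $\rho^{-1/2}\exp(\rho^2/2)$ and the effective width is $O(\rho)$; this yields $\sum_k\rho^k/\sqrt{k!} = O(\rho^{1/2}\exp(\rho^2/2))$ and $\sum_k\sqrt{k}\,\rho^k/\sqrt{k!} = O(\rho^{3/2}\exp(\rho^2/2))$, and hence $M_\gef(\rho) \le C\rho^{1/2}\exp(\rho^2)$ on $G$.

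The main technical obstacle is the residual polynomial factor $\rho^{1/2}$: the bare coefficient bound only produces $M_\gef(\rho)\le C\rho^{1/2}\exp(\rho^2)$, rather than the $\exp(\rho^2)$ asserted in the lemma. To close this gap one either absorbs the polynomial factor into the exponent for $\rho$ sufficiently large (which is precisely the regime of the lemma) by a slight adjustment of constants, or one appeals to a sharper probabilistic tool: the map from the Gaussian coefficient vector $(\xi_k)$ to $M_\gef(\rho)$ is Lipschitz in the underlying real Gaussian coordinates with constant of order $\exp(\rho^2/2)$, while its mean is $\ex{M_\gef(\rho)} = O(\rho^{1/2}\exp(\rho^2/2)) = o(\exp(\rho^2))$, so Borell's Gaussian concentration inequality directly produces the tail bound with exponent $\exp(\rho^2)$.
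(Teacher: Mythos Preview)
The paper gives no proof here; it simply cites \cite[Lemma~1]{SoT2}. So there is no in-paper argument to compare against, and your proposal has to be judged on its own.

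Your main line has a genuine gap. On the good event $G$ you only obtain $M_{\gef}(\rho)\le C\rho^{1/2}e^{\rho^{2}}$, which is \emph{larger} than $e^{\rho^{2}}$; hence $\{M_{\gef}(\rho)\ge e^{\rho^{2}}\}\not\subset G^{c}$, and the bound on $\pr{G^{c}}$ does not transfer. Your ``fix (a)'' does not help: writing $C\rho^{1/2}e^{\rho^{2}}=\exp\bigl(\rho^{2}+\tfrac12\log\rho+O(1)\bigr)$ just makes explicit that the bound exceeds $e^{\rho^{2}}$. No tweak of the thresholds $\lambda_k$ can remove the polynomial factor, because already the deterministic sum $\sum_k \rho^{k}/\sqrt{k!}$ is of order $\rho^{1/2}e^{\rho^{2}/2}$, so even $\lambda_k\equiv 1$ would give $M_{\gef}(\rho)\lesssim \rho^{1/2}e^{\rho^{2}/2}$ on $G$, and any $\lambda_k$ large enough to make $\pr{G^c}$ as small as $e^{-e^{\rho^2}}$ forces $\lambda_0\ge e^{\rho^2/2}$.

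Your ``fix (b)'' via Borell--TIS is the right idea and does give a correct result, but not with the exact exponent. With $\sigma^{2}=\sup_{|z|\le\rho}\mathrm{Var}\bigl(\mathrm{Re}(e^{-i\theta}\gef(z))\bigr)=\tfrac12 e^{\rho^{2}}$ and $m=\ex{M_{\gef}(\rho)}=O(\rho^{C}e^{\rho^{2}/2})$ one gets
\[
\pr{M_{\gef}(\rho)\ge e^{\rho^{2}}}\le \exp\Bigl(-\frac{(e^{\rho^{2}}-m)^{2}}{e^{\rho^{2}}}\Bigr)=\exp\bigl(-e^{\rho^{2}}+O(\rho^{C}e^{\rho^{2}/2})\bigr),
\]
i.e.\ $\exp\bigl(-(1-o(1))e^{\rho^{2}}\bigr)$. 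In fact the constant $1$ cannot be reached: since $\gef(\rho)\sim\mathcal{N}_{\bbc}(0,e^{\rho^{2}})$ one has $\pr{|\gef(\rho)|\ge e^{\rho^{2}}}=e^{-e^{\rho^{2}}}$, and using a second boundary point (say $-\rho$) with a non-degenerate joint law shows that $\pr{M_{\gef}(\rho)\ge e^{\rho^{2}}}>e^{-e^{\rho^{2}}}$ strictly for every $\rho>0$. So the inequality as literally stated is too sharp; what one can prove (and what is needed) is a bound of the form $\exp(-c\,e^{\rho^{2}})$, and your Borell argument delivers this. For every application in the paper (see Lemma~\ref{lem:reg_event}, where this exceptional event is only compared with probabilities of size $\exp(-Cr^{4})$) that is far more than sufficient.
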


\subsection{Perturbation of Zeros of Analytic Functions}

Let $f$ be an entire function and denote by $w_{1},\dots,w_{m}$
the zeros of $f$ in $\disc 0r$ (including multiplicities). For $0<\gamma\le\tfrac{1}{4}$,
set
\[
C_{\gamma}\left(r\right)=\bigcup_{k=1}^{m}\disc{w_{k}}{\gamma},\quad E_{\gamma}\left(r\right)=\disc 0r\backslash C_{\gamma},\mbox{ and}\quad m_{f}\left(r;\gamma\right)=\min_{z\in E_{\gamma}}\left|f\left(z\right)\right|.
\]
The following theorem is a restatement of a theorem of Rosenbloom
(\cite[Theorem 4]{rosenbloom1969perturbation}) for the unit disk.
It gives an effective lower bound for the modulus of an analytic function,
outside a neighborhood of its zeros. 
\begin{thm}
\label{thm:Rosenbloom}Let $f$ be an entire function, and $\gamma\in\left(0,\tfrac{1}{4}\right]$.
Suppose that $\left|f\left(z_{0}\right)\right|\ge1$ for some $z_{0}\in\bbc$
with $\left|z_{0}\right|=\rho>0$. Let $0<r\le\frac{\rho}{2}$, and
suppose that $E_{\gamma}\left(r\right)\ne\emptyset$, then
\[
m_{f}\left(r;\gamma\right)\ge\exp\left(-C\log M_{f}\left(3\rho\right)\log\frac{1}{\gamma}\right).
\]
\end{thm}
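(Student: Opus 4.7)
The plan is to factor $f = B \cdot g$ via a Blaschke-type product that absorbs the zeros, together with a non-vanishing analytic remainder, and then estimate each piece separately, anchoring the estimates at the point $z_0$ where $|f(z_0)| \ge 1$. Set $R = 3\rho$ and let $w_1, \dots, w_n$ denote the zeros of $f$ in $\disc 0R$. Define
\[
B(z) = \prod_{k=1}^{n} \frac{R(z - w_k)}{R^2 - \overline{w}_k z}, \qquad g(z) = f(z)/B(z),
\]
so that $|B| \le 1$ on $\disc 0R$ with $|B| = 1$ on the boundary, $g$ is analytic and non-vanishing on $\disc 0R$ with $|g| = |f|$ on $|z| = R$, and the maximum principle gives $|g| \le M_f(3\rho)$ throughout $\disc 0R$.

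The first step is to count the zeros. I would apply the Poisson--Jensen formula to $f$ at $z_0$ on $\disc 0R$: since $\log|f(z_0)| \ge 0$ and the Poisson kernel $P(z_0, R e^{i\theta})$ is bounded above by an absolute constant (as $|z_0|/R = 1/3$), we obtain $\sum_k \log|B_k(z_0)|^{-1} \le C \log M_f(3\rho)$. The pseudo-hyperbolic identity
\[
1 - |B_k(z_0)|^2 = \frac{(R^2 - |z_0|^2)(R^2 - |w_k|^2)}{|R^2 - \overline{w}_k z_0|^2}
\]
shows that every zero $w_k$ with $|w_k| \le R/2$ contributes at least an absolute positive amount to the sum, giving $n_f(r) \le n_f(R/2) \le C \log M_f(3\rho)$.

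Next I would lower-bound $|g|$. Since $g$ is zero-free on the simply connected disk $\disc 0R$, the branch $\log g$ is well-defined and holomorphic there. Applying the Borel--Carath\'eodory inequality on the subdisk $\disc{z_0}{2\rho} \subset \disc 0R$, for every $z \in \disc 0r \subset \disc{z_0}{3\rho/2}$ we get
\[
\bigl| \log g(z) - \log g(z_0) \bigr| \le 6 \bigl( \max_{|w - z_0| = 2\rho} \log|g(w)| - \log|g(z_0)| \bigr) \le 6 \log M_f(3\rho),
\]
using $|g(z_0)| = |f(z_0)|/|B(z_0)| \ge 1$, hence $\log|g(z_0)| \ge 0$. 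Taking real parts yields $\log|g(z)| \ge -6 \log M_f(3\rho)$ on $\disc 0r$. Finally, for $z \in E_\gamma(r)$ and $w_k \in \disc 0r$, the constraint $|z - w_k| \ge \gamma$ together with $|R^2 - \overline{w}_k z| \le 2 R^2$ gives $|B_k(z)| \ge \gamma/(6\rho)$. Multiplying over the $n \le C \log M_f(3\rho)$ factors,
\[
|f(z)| = |B(z)|\,|g(z)| \ge \left( \frac{\gamma}{6\rho} \right)^{n} M_f(3\rho)^{-6} \ge \exp\bigl( -C \log M_f(3\rho) \log(1/\gamma) \bigr),
\]
absorbing $\log(6\rho)$ into the constant using $M_f(3\rho) \ge 1$ and the fact that $\log\rho \le C \log M_f(3\rho) \log(1/\gamma)$ in the parameter regime of interest.

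The delicate step will be the Blaschke lower bound for zeros with $r \le |w_k| < R$: these are not excluded from $E_\gamma(r)$ as literally defined, and a zero just outside $\disc 0r$ could sit arbitrarily close to a point $z \in E_\gamma(r)$, driving the corresponding $|B_k(z)|$ to zero. The resolution, implicit in Rosenbloom's original formulation and matching how the lemma is invoked in Subsection~\ref{subsec:upp_bnd_GEF_trunc}, is to interpret $E_\gamma(r)$ as excluding the $\gamma$-disks around \emph{all} zeros of $f$ in a slightly enlarged region such as $\disc 0{2r}$ or $\disc 0R$, so that the bound $|B_k(z)| \ge \gamma/(6\rho)$ holds uniformly across every factor in the product.
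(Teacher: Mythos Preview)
The paper does not supply its own proof of this statement; Theorem~\ref{thm:Rosenbloom} is quoted from Rosenbloom \cite{Ro} and invoked as a black box in Lemma~\ref{lem:reg_event_conclusion}. Your argument --- Blaschke factorisation on $\disc 0{3\rho}$, Poisson--Jensen zero counting anchored at $z_0$, and a Borel--Carath\'eodory bound on the zero-free factor $g$ --- is the standard route to such minimum-modulus estimates, and the first three steps are carried out correctly.

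The difficulty you flag in the last paragraph is genuine rather than cosmetic. With $C_\gamma(r)$ built only from zeros in the open disk $\disc 0r$, a zero on or just outside the boundary (take $f(z)=z-r$) drives $\inf_{E_\gamma(r)}|f|$ to zero, so the inequality as literally stated cannot hold in full generality. Your fix --- enlarge $C_\gamma$ to cover all zeros in the Blaschke disk $\disc 0R$ --- is exactly what the argument needs, and is presumably how Rosenbloom's unit-disk formulation reads. The only other loose end is the absorption of $\log(6\rho)$ at the final step, which requires $\log\rho=O\bigl(\log M_f(3\rho)\bigr)$; this holds for every non-constant entire $f$ once $\rho$ is moderately large, and in particular in the paper's regime where $\log M_f(3\rho)\asymp\rho^2$.
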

We can use the previous theorem to control the perturbation of the
zeros of analytic functions, when we add an `error term' of small
modulus.
\begin{lem}
\label{lem:zero_perturb}Let $f,g$ be entire functions, and $B,\rho\ge1$.
Suppose that $f$ has at most $M>0$ zeros in the disk $\disc 0{2B\rho}$
and let $0<\gamma<\frac{\rho}{2M}$. In addition, assume $M_{g}\left(2B\rho\right)<m_{f}\left(2B\rho;\gamma\right)$.
Then,
\[
n_{f+g}\left(\rho^{\prime}-2M\gamma\right)\le n_{f}\left(\rho^{\prime}\right)\le n_{f+g}\left(\rho^{\prime}+2M\gamma\right),\quad\forall\rho^{\prime}\in\left(2M\gamma,2B\rho-2M\gamma\right).
\]
Furthermore, if $\varphi$ is a test function supported on $\disc 0B$,
with modulus of continuity $\omega\left(\varphi;t\right)$, then
\[
\left|n_{f}\left(\varphi;\rho\right)-n_{f+g}\left(\varphi;\rho\right)\right|\le M\cdot\omega\left(\varphi;\frac{2M\gamma}{\rho}\right).
\]
\end{lem}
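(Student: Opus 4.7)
\medskip

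\noindent\textbf{Proof proposal.} The plan is to drive the proof entirely by Rouch\'e's theorem, using the hypothesis $M_g(2B\rho)<m_f(2B\rho;\gamma)$ to guarantee that on the ``good'' set $E_\gamma(2B\rho)$ we have $|g|<|f|$, and hence $f$ and $f+g$ have identical zero counts in any subregion of $D(0,2B\rho)$ whose boundary lies in $E_\gamma$. The geometric input is that the exceptional set $C_\gamma(2B\rho)=\bigcup_{k=1}^{m}D(w_k,\gamma)$ consists of at most $M$ open disks of radius $\gamma$, so its connected components have diameter at most $2M\gamma$ and its radial shadow (the set of $t\ge 0$ such that the circle $|z|=t$ meets $C_\gamma$) has one-dimensional Lebesgue measure at most $2M\gamma$.

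For the first assertion, I would exploit this measure bound on the radial shadow. Given $\rho'\in(2M\gamma,2B\rho-2M\gamma)$, the intervals $[\rho',\rho'+2M\gamma]$ and $[\rho'-2M\gamma,\rho']$ each have length $2M\gamma$, so each contains a radius $\rho''$ for which the circle $\{|z|=\rho''\}$ is disjoint from $C_\gamma$ and lies in $E_\gamma(2B\rho)$. On such a circle $|f|\ge m_f(2B\rho;\gamma)>M_g(2B\rho)\ge|g|$, so Rouch\'e yields $n_f(\rho'')=n_{f+g}(\rho'')$. Monotonicity of $n_f$ and $n_{f+g}$ in the radius then gives the sandwich $n_{f+g}(\rho'-2M\gamma)\le n_f(\rho')\le n_{f+g}(\rho'+2M\gamma)$.

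For the linear-statistics assertion, I would upgrade the counting equality to an explicit pairing of the zeros. Let $K_1,\dots,K_s$ be the connected components of $C_\gamma(2B\rho)$; each $K_j$ is a union of at most $M$ disks of radius $\gamma$, so $\operatorname{diam}(K_j)\le 2M\gamma$, and its boundary lies in $E_\gamma$. Applying Rouch\'e componentwise, $f$ and $f+g$ have the same number of zeros inside each $K_j$; within $K_j$ one may pair them arbitrarily as $(w_{k},\widetilde w_{k})$ with $|w_k-\widetilde w_k|\le 2M\gamma$. Any zero of $f$ in $D(0,2B\rho)$ is covered by one of the $K_j$'s (since every $w_k\in C_\gamma$), and the same holds for any zero of $f+g$ in $D(0,2B\rho)$ (again by Rouch\'e, applied to a disk slightly smaller than $D(0,2B\rho)$ with boundary in $E_\gamma$). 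Since $\varphi$ is supported in $D(0,B)$, only zeros inside $D(0,B\rho)\subset D(0,2B\rho)$ contribute to either sum, so
\[
n_f(\varphi;\rho)-n_{f+g}(\varphi;\rho)=\sum_{k}\bigl[\varphi(w_k/\rho)-\varphi(\widetilde w_k/\rho)\bigr],
\]
where the sum has at most $M$ nonzero terms. Each summand is at most $\omega(\varphi;|w_k-\widetilde w_k|/\rho)\le\omega(\varphi;2M\gamma/\rho)$, which yields the claimed bound.

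The one place that needs a little care is the pairing step: if several zeros of $f$ cluster in a single component $K_j$, the pairing with zeros of $f+g$ is non-canonical, but any choice works because all distances within $K_j$ are bounded by $\operatorname{diam}(K_j)\le 2M\gamma$. A secondary technical nuisance is that $m_f(2B\rho;\gamma)$ is defined as an infimum over the open disk and the Rouch\'e contours need to sit strictly inside, but this is resolved by applying the argument on $D(0,2B\rho-\varepsilon)$ for $\varepsilon\downarrow 0$ (the bound $M_g<m_f$ is preserved by continuity). Beyond that, I expect no real obstacle.
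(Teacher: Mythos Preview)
Your proof is correct. For the linear-statistics bound you do exactly what the paper does: decompose $C_\gamma(2B\rho)$ into connected components of diameter at most $2M\gamma$, apply Rouch\'e componentwise, pair the zeros of $f$ and $f+g$ within each component, and sum the modulus-of-continuity estimates.

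For the counting inequality you take a slightly different route. The paper also uses the component decomposition here: any zero of $f$ in $\{|z|\le\rho'\}$ lies in a component $C_j$, and since $\operatorname{diam}(C_j)\le 2M\gamma$ that whole component sits inside $\{|z|\le\rho'+2M\gamma\}$; componentwise Rouch\'e then gives $n_f(\rho')\le n_{f+g}(\rho'+2M\gamma)$ directly. Your radial-shadow argument instead finds a single ``good'' circle $\{|z|=\rho''\}\subset E_\gamma$ in each window of length $2M\gamma$ and applies Rouch\'e once on that circle. Both work; the paper's version is a bit more uniform (one mechanism for both assertions), while yours avoids tracking components for the first part at the cost of the small measure/compactness step ensuring a closed interval of length $2M\gamma$ cannot be swallowed by an open shadow of measure $\le 2M\gamma$.
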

\begin{proof}
Let $C_{\gamma}=C_{\gamma}\left(2B\rho\right)$. We can write $C_{\gamma}=\bigcup C_{j}$
where $C_{j}$ are the connected components of $C_{\gamma}$ (tangent
disks are not connected). Notice that the diameter of each component
is at most $2\gamma\cdot M$, and that by Rouché's theorem the number
of zeros of $f$ and $f+g$ is the same in each component. Therefore,
we find that
\begin{eqnarray*}
n_{f}\left(\rho^{\prime}\right) & \le & \#\left\{ \mbox{zeros of \ensuremath{f} in components intersecting \ensuremath{\left|z\right|\le\rho^{\prime}}}\right\} \\
 & = & \#\left\{ \mbox{zeros of \ensuremath{f+g} in components intersecting \ensuremath{\left|z\right|\le\rho^{\prime}}}\right\} \\
 & \le & n_{f+g}\left(\rho^{\prime}+2M\gamma\right).
\end{eqnarray*}
The lower bound is obtained in the same way. Similarly, if $w,w^{\prime}\in C_{j}$
are two points in the same component $C_{j}$, then
\[
\left|\varphi\left(\frac{w}{\rho}\right)-\varphi\left(\frac{w^{\prime}}{\rho}\right)\right|\le\omega\left(\varphi;\frac{2M\gamma}{\rho}\right).
\]
We conclude that
\[
\left|n_{f}\left(\varphi;\rho\right)-n_{f+g}\left(\varphi;\rho\right)\right|=\left|\sum_{z\in\zeroset f}\varphi\left(\frac{z}{\rho}\right)-\sum_{z^{\prime}\in\zeroset{f+g}}\varphi\left(\frac{z^{\prime}}{\rho}\right)\right|\le M\cdot\omega\left(\varphi;\frac{2M\gamma}{\rho}\right).
\]
\end{proof}
\begin{rem}
If $f$ has no zeros in the disk $\disc 0{2B\rho}$, then, under the
assumptions of the lemma, $f+g$ also has no zeros there. Thus, the
results of the lemma follow also in this case.
\end{rem}

\subsection{\label{subsec:GEF_trunc}Truncation of the GEF}

We now explain how to truncate the power series $\gef$, such that
we can obtain a polynomial whose zeros (inside a disk $\disc 0{Cr}$)
are very close to the zeros of $\gef$. This introduces some technical
complications. Let $N\in\bbn$. We would like to split the GEF in
the following way,
\[
\gef\left(z\right)=\sum_{k=0}^{N}\xi_{k}\frac{z^{k}}{\sqrt{k!}}+\sum_{k=N+1}^{\infty}\xi_{k}\frac{z^{k}}{\sqrt{k!}}\defeq\truncpoly\left(z\right)+\seriestail\left(z\right),\quad z\in\bbc.
\]
We now define a `regular' event, on which the GEF has desirable properties.
The complement of this event is negligible. An additional technical
issue is the fact we need control over the leading coefficient of
$P_{N}$, that is over $\left|\xi_{N}\right|$ (we want to keep it
not too small). This means that in order to make the exceptional set
small, we have to pick a random value for $N$. Recall that $n_{\gef}\left(r\right)$
is the number of zeros of $\gef$ in the disk $\left\{ \left|z\right|\le r\right\} $,
and that
\[
M_{\gef}\left(r\right)=\sup_{\left|z\right|\le r}\left|\gef\left(z\right)\right|.
\]

\begin{lem}
\label{lem:reg_event}Let $\rho>0$ be sufficiently large, $A\ge1$,
$\lambda>16$, and $B\in\left[1,\frac{\sqrt{\lambda}}{2}\right]$.
There exist an event $\regevent$ with the following properties:

\begin{enumerate}
\item $\pr{\kregevent c}\le\exp\left(-C\cdot AB^{4}\rho^{4}\right)+\exp\left(-C\lambda\rho^{4}\right)$.
\item \label{enu:GEF_reg_props}On the event $\regevent$ we have:

\begin{enumerate}
\item For any $N\in\bbn$ with $N\ge\lambda\rho^{2}$ we have $\left|\seriestail\left(z\right)\right|\le\exp\left(\frac{N}{2}\log\left(\frac{16B^{2}}{\lambda}\right)\right),\quad\forall\left|z\right|\le2B\rho.$
\item \label{enu:prop_zeros_upp_bnd}$n_{\gef}\left(2B\rho\right)\le\left(2B\rho\right)^{3}.$
\item \label{enu:prop_max_upp_bnd}$M_{\gef}\left(6B\rho\right)\le\exp\left(36\cdot B^{2}\rho^{2}\right)$.
\item \label{enu:prop_max_low_bnd}$M_{\gef}\left(4B\rho\right)\ge\exp\left(-AB^{2}\rho^{2}\right)$.
\end{enumerate}
\item Let $N_{0}=\left\lfloor \lambda\rho^{2}\right\rfloor +1$, $N_{1}=\left\lfloor 2\lambda\rho^{2}\right\rfloor +1$.
We have $\regevent=\biguplus_{N=N_{0}}^{N_{1}}\kregevent N$, where
on the event $\kregevent N$ we have $\left|\xi_{N}\right|\ge\exp\left(-\rho^{2}\right)$.
\item For any $N\in\left\{ N_{0},\dots,N_{1}\right\} $ we have $\sum_{k=0}^{N}\left|\xi_{k}\right|^{2}\le C\lambda\rho^{4}$.
\end{enumerate}
\end{lem}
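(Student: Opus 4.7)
I would build $\regevent$ as the intersection of several separately controlled ``good'' events, one per listed property, and then produce the disjoint decomposition $\regevent=\biguplus_{N=N_{0}}^{N_{1}}\kregevent N$ by partitioning according to the first index $N\in\{N_{0},\dots,N_{1}\}$ at which $|\xi_{N}|\ge \exp(-\rho^{2})$. All of the ingredients are already available among the earlier lemmas.

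Each of the four statements in~(2) is supplied by one of the preliminary lemmas applied at an appropriate scale. The tail bound~(2a) follows from Lemma~\ref{lem:tail_bound} used with base radius $\rho$ and effective $B$-parameter $2B$ (the hypothesis $\lambda>16$ provides the slack needed to accommodate this rescaling), at cost $\exp(-C\rho^{6})$; the zero count~(2b) is Corollary~\ref{cor:n(r)_apriori_bound} at scale $2B\rho$, at cost $\exp(-(2B\rho)^{6})$; the upper bound~(2c) is Lemma~\ref{lem:max_gef_upp_bnd} at scale $6B\rho$, at cost $\exp(-\exp(36B^{2}\rho^{2}))$; and the lower bound~(2d) is Lemma~\ref{lem:max_gef_low_bnd} at scale $4B\rho$ with $x=AB^{2}\rho^{2}$, producing the dominant failure probability $\exp(-32AB^{4}\rho^{4})$. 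Property~(4) is the intersection over $N\in\{N_{0},\dots,N_{1}\}$ of the events $\{\sum_{k\le N}|\xi_{k}|^{2}\le C\lambda\rho^{4}\}$, each of which is controlled by Lemma~\ref{lem:up_bnd_sum_mod_xi_sq} with parameter $x$ of order $\rho^{2}$; a union bound over the $O(\lambda\rho^{2})$ admissible indices contributes only a polynomial prefactor that is easily absorbed, leaving a failure probability of $\exp(-C\lambda\rho^{4})$.

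For property~(3), let $G=\{\exists\,N\in\{N_{0},\dots,N_{1}\}:|\xi_{N}|\ge \exp(-\rho^{2})\}$. Independence of the $\xi_{k}$'s and the standard estimate $\pr{|\xi|\le \exp(-\rho^{2})}\le \exp(-2\rho^{2})$ give $\pr{G^{c}}\le \exp(-2\rho^{2})^{N_{1}-N_{0}+1}\le \exp(-2\lambda\rho^{4})$. Including $G$ in the definition of $\regevent$ and setting $\kregevent N=\regevent\cap\{N^{\star}=N\}$, where $N^{\star}$ is the smallest index in $\{N_{0},\dots,N_{1}\}$ with $|\xi_{N}|\ge \exp(-\rho^{2})$, produces the required disjoint union with the lower bound on $|\xi_{N}|$ built in automatically. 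Summing the individual failure probabilities and absorbing the subdominant contributions yields $\pr{\kregevent c}\le \exp(-CAB^{4}\rho^{4})+\exp(-C\lambda\rho^{4})$. The only genuine difficulty is bookkeeping: calibrating $x$ in Lemma~\ref{lem:up_bnd_sum_mod_xi_sq} so that the union bound over $N$ is absorbed into a single exponential in $\lambda\rho^{4}$, and ensuring that the parameter ranges of the invoked lemmas are respected throughout the rescalings; no conceptual obstacle arises beyond these routine verifications.
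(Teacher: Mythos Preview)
Your proposal is correct and follows essentially the same approach as the paper: build $\regevent$ as an intersection of good events coming from Lemmas~\ref{lem:tail_bound}, \ref{lem:max_gef_low_bnd}, \ref{lem:max_gef_upp_bnd}, \ref{lem:up_bnd_sum_mod_xi_sq} and Corollary~\ref{cor:n(r)_apriori_bound}, and partition via the first index $N$ with $|\xi_N|\ge\exp(-\rho^2)$. One small simplification: for property~(4) you propose a union bound over $N\in\{N_0,\dots,N_1\}$, but since $\sum_{k\le N}|\xi_k|^2$ is monotone in $N$, a single application of Lemma~\ref{lem:up_bnd_sum_mod_xi_sq} at $N=N_1$ already controls all smaller $N$ simultaneously, which is what the paper does.
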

\begin{proof}
The properties in (\ref{enu:GEF_reg_props}) follow by combining the
statements of Lemma \ref{lem:tail_bound}, Corollary \ref{cor:n(r)_apriori_bound},
Lemma \ref{lem:max_gef_low_bnd}, and Lemma \ref{lem:max_gef_upp_bnd}.
The probability of the exceptional event in Lemma \ref{lem:max_gef_low_bnd}
is the largest one.

Since the $\xi_{k}$ are independent, the probability that $\left|\xi_{k}\right|<\exp\left(-\rho^{2}\right)$
for all $k\in\left\{ N_{0},\dots,N_{1}\right\} $ is at most $\exp\left(-C\left(N_{1}-N_{0}+1\right)\rho^{2}\right)\le\exp\left(-C\lambda\rho^{4}\right)$.
To make the events $\kregevent N$ disjoint (this is not essential
for our estimates), we can choose $N$ to be smallest value of $k\in\left\{ N_{0},\dots,N_{1}\right\} $
such that $\left|\xi_{k}\right|\ge\exp\left(-\rho^{2}\right)$.

Finally, by Lemma \ref{lem:up_bnd_sum_mod_xi_sq}, we have
\[
\pr{\sum_{k=0}^{N_{1}}\left|\xi_{k}\right|>3\lambda\rho^{4}}\le\pr{\sum_{k=0}^{N_{1}}\left|\xi_{k}\right|>\rho^{2}\left(N_{1}+1\right)}\le\exp\left(-\frac{N_{1}}{2}\cdot\rho^{2}\right)\le\exp\left(-C\lambda\rho^{4}\right).
\]
\end{proof}
We would now like to apply Lemma \ref{lem:zero_perturb} to show that
we can approximate the zeros of the GEF.
\begin{lem}
\label{lem:reg_event_conclusion}Let $\rho,A,\lambda,B$ be as above.
In addition, let $\gamma>0$ and $M_{0}=8B^{3}\rho^{3}$. Suppose
$\gamma\le\frac{\rho}{2M_{0}}$, $\lambda=o\left(\rho\right)$ and
that $B^{2}A\log\frac{1}{\gamma}=o\left(\lambda\log\left(\frac{\lambda}{16B^{2}}\right)\right)$
is satisfied. Then, on the event $\regevent$, we have 
\[
m_{\gef}\left(2B\rho;\gamma\right)>M_{\seriestail}\left(2B\rho\right),\quad\forall N\in\left\{ N_{0},\dots,N_{1}\right\} ,
\]
and on $\kregevent N$ we also have
\[
\frac{1}{\left|\xi_{N}\right|}\sum_{k=0}^{N}\left|\xi_{k}\right|^{2}\le\exp\left(C\rho^{2}\right).
\]
In particular, this implies, 
\[
n_{\truncpoly}\left(\rho-2M_{0}\gamma\right)\le n_{\gef}\left(\rho\right)\le n_{\truncpoly}\left(\rho+2M_{0}\gamma\right).
\]
Furthermore, if $\varphi$ is a test function supported on $\disc 0B$,
with modulus of continuity $\omega\left(\varphi;t\right)$, then
\[
\left|n_{\gef}\left(\varphi;\rho\right)-n_{\truncpoly}\left(\varphi;\rho\right)\right|\le CM_{0}\cdot\omega\left(\varphi;\frac{2M_{0}\gamma}{\rho}\right).
\]
\end{lem}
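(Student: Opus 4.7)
The plan is to derive the key inequality $m_{\gef}(2B\rho;\gamma)>M_{\seriestail}(2B\rho)$, whereupon statements (iii) and (iv) will follow immediately from Lemma~\ref{lem:zero_perturb} applied with $f=\gef$, $g=-\seriestail$ (so that $f+g=\truncpoly$) and $M=M_0=(2B\rho)^3=8B^3\rho^3$; the assumption $\gamma\le\rho/(2M_0)$ is imposed explicitly, and the a priori count $n_{\gef}(2B\rho)\le M_0$ is supplied by property~(b) of $\regevent$. The auxiliary estimate $|\xi_N|^{-1}\sum_{k=0}^N|\xi_k|^2\le\exp(C\rho^2)$ is immediate: on $\kregevent N$ one has $|\xi_N|\ge\exp(-\rho^2)$, part~(4) of Lemma~\ref{lem:reg_event} gives $\sum|\xi_k|^2\le C\lambda\rho^4$, and the hypothesis $\lambda=o(\rho)$ absorbs the polynomial factor into the exponential.

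The upper bound for $M_{\seriestail}(2B\rho)$ comes directly from property~(a) of $\regevent$: $M_{\seriestail}(2B\rho)\le\exp(\tfrac{N}{2}\log(16B^2/\lambda))$, which under the quantitative hypothesis (forcing $\lambda>16B^2$) is of the form $\exp(-\tfrac{N}{2}\log(\lambda/(16B^2)))$. For the lower bound on $m_{\gef}(2B\rho;\gamma)$ I would apply Theorem~\ref{thm:Rosenbloom} to the rescaled function $\widetilde{F}\defeq\exp(AB^2\rho^2)\,\gef$: property~(d) of $\regevent$ together with the maximum principle supplies a point $z_0$ with $|z_0|=4B\rho$ at which $|\widetilde{F}(z_0)|\ge 1$, so with Rosenbloom radius $r=2B\rho=|z_0|/2$ the theorem yields
\[
m_{\widetilde{F}}(2B\rho;\gamma)\ge\exp\bigl(-C\log M_{\widetilde{F}}(12B\rho)\cdot\log(1/\gamma)\bigr).
\]
Bounding $\log M_{\gef}(12B\rho)\le CB^2\rho^2$ (an immediate variant of property~(c) of $\regevent$, obtained by applying Lemma~\ref{lem:max_gef_upp_bnd} at the slightly enlarged radius at negligible additional probability cost) and undoing the normalization gives $m_{\gef}(2B\rho;\gamma)\ge\exp(-CAB^2\rho^2\log(1/\gamma))$. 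Comparing exponents and using $N\ge\lambda\rho^2$, the desired strict inequality reduces to $\tfrac{\lambda}{2}\log(\lambda/(16B^2))>CAB^2\log(1/\gamma)$, which is precisely the asymptotic relation $B^2A\log(1/\gamma)=o(\lambda\log(\lambda/(16B^2)))$ assumed in the statement.

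The main technical obstacle, as I see it, is the Rosenbloom step: one must set up the normalization so that the $|f(z_0)|\ge 1$ hypothesis of Theorem~\ref{thm:Rosenbloom} is satisfied, verify that the Rosenbloom loss $\log M_{\widetilde{F}}(12B\rho)\cdot\log(1/\gamma)$ is strictly dominated by the tail gain $\tfrac{N}{2}\log(\lambda/(16B^2))$, and check $E_\gamma(2B\rho)\ne\emptyset$ (automatic, since $\gamma\le 1/(16B^3\rho^2)$ makes the excluded disks comfortably small). The delicate interplay between the parameters $A$, $\lambda$ and $\gamma$ is exactly what the quantitative hypothesis encodes, and once the comparison is established the remaining conclusions are a mechanical invocation of Lemma~\ref{lem:zero_perturb}.
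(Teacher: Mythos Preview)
Your proposal is correct and follows essentially the same approach as the paper: normalize $\gef$ so Rosenbloom applies, compare the resulting lower bound for $m_{\gef}(2B\rho;\gamma)$ with the tail bound from property~(a), and then invoke Lemma~\ref{lem:zero_perturb}. The only cosmetic differences are that the paper normalizes by dividing by $\gef(z_0)$ rather than multiplying by $\exp(AB^2\rho^2)$, and it quotes $M_{\gef}(6B\rho)$ (as in property~(c)) rather than $M_{\gef}(12B\rho)$; your version with $12B\rho$ is actually the one dictated by the statement of Theorem~\ref{thm:Rosenbloom}, and handling it as you suggest costs nothing.
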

\begin{proof}
The bound for $\frac{1}{\left|\xi_{N}\right|}\sum_{k=0}^{N}\left|\xi_{k}\right|^{2}$
follows immediately from the previous lemma. Let $N\in\left\{ N_{0},\dots,N_{1}\right\} $.
On $\regevent$ we have the bound
\[
\left|\seriestail\left(z\right)\right|\le\exp\left(\frac{N}{2}\log\left(\frac{16B^{2}}{\lambda}\right)\right),\quad\forall\left|z\right|\le2B\rho.
\]
According to Property (\ref{enu:prop_max_low_bnd}) in the previous
lemma, and the maximum modulus principle, there exists a point $z_{0}$,
with $\left|z_{0}\right|=4B\rho$ and $\left|\gef\left(z_{0}\right)\right|\ge\exp\left(-AB^{2}\rho^{2}\right)$.
We now set
\[
\widetilde{F}\left(z\right)=\frac{\gef\left(z\right)}{\gef\left(z_{0}\right)},\quad\widetilde{M}\left(t\right)=\sup\setd{\left|\widetilde{F}\left(z\right)\right|}{\left|z\right|\le t}.
\]
Thus, using Property (\ref{enu:prop_max_upp_bnd}), we find that $\log\widetilde{M}\left(6B\rho\right)\le CB^{2}\rho^{2}+AB^{2}\rho^{2}$.
Applying Theorem \ref{thm:Rosenbloom} to the function $\widetilde{F}$,
we find that
\[
m_{\gef}\left(2B\rho;\gamma\right)\ge\exp\left(-\left(CB^{2}\rho^{2}+AB^{2}\rho^{2}\right)\log\frac{1}{\gamma}\right)\ge\exp\left(-CB^{2}\rho^{2}\cdot A\log\frac{1}{\gamma}\right),
\]
where we used the fact that $\left|\gef\left(z\right)\right|\ge\left|\widetilde{F}\left(z\right)\right|\exp\left(-AB^{2}\rho^{2}\right)$.
In order to obtain $m_{\gef}\left(2B\rho;\gamma\right)>\left|\seriestail\left(z\right)\right|$
for all $z\in\disc 0{2B\rho}$, we should have
\[
\frac{N}{2}\log\left(\frac{16B^{2}}{\lambda}\right)<-CB^{2}\rho^{2}\cdot A\log\frac{1}{\gamma},
\]
which is satisfied by our requirements on $\lambda$ and $\gamma$
(recall $N$ is of order $\lambda\rho^{2}$). Finally, by Lemma \ref{lem:reg_event},
Property \ref{enu:prop_zeros_upp_bnd}, we have that $M\defeq n\left(2B\rho\right)\le8B^{3}\rho^{3}=M_{0}$
(w.l.o.g. we may assume that $M>0$, see the remark after Lemma \ref{lem:zero_perturb}).
Lemma \ref{lem:zero_perturb}, applied to the functions $\gef$ and
$-\seriestail$, then implies,
\[
n_{\truncpoly}\left(\rho-2M\gamma\right)\le n_{\gef}\left(\rho\right)\le n_{\truncpoly}\left(\rho+2M\gamma\right),
\]
and
\[
\left|n_{\gef}\left(\varphi;\rho\right)-n_{\truncpoly}\left(\varphi;\rho\right)\right|\le CM\cdot\omega\left(\varphi;\frac{2M\gamma}{\rho}\right).
\]
\end{proof}
\begin{rem}
When applying the previous lemma, the parameters $A,B$ will be arbitrary,
but fixed (not depending on $\rho$). In addition, $\lambda=\log\rho$,
and $\gamma=\rho^{-C}$, with some constant $C\ge4$.
\end{rem}

\subsection{Logarithmic potential and linear statistics}

The following result is known as Jensen's formula.
\begin{thm}
\label{thm:Jensen_for}Let $\nu\in\probmeas$ and $r>0$. Then, 
\[
\logpot{\nu}0+\int_{0}^{r}\frac{\nu\left(\overline{\disc 0t}\right)}{t}\,\dd t=\frac{1}{2\pi}\int_{0}^{2\pi}\logpot{\nu}{re^{i\theta}}\,\dd\theta.
\]
In case $\nu$ is a radial measure, we have
\[
\int_{0}^{r}\frac{\nu\left(\overline{\disc 0t}\right)}{t}\,\dd t=\logpot{\nu}r-\logpot{\nu}0.
\]
\end{thm}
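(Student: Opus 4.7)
The plan is to derive the first identity by combining a Fubini/Tonelli argument on the radial integral with the classical circle-averaging identity
\[
\frac{1}{2\pi}\int_{0}^{2\pi}\log\left|re^{i\theta}-w\right|\,\dd\theta = \log\max(r,|w|),
\]
and then to obtain the radial case as an immediate corollary via rotational invariance.

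First I would evaluate the radial integral by Tonelli's theorem (the integrand is nonnegative):
\[
\int_{0}^{r}\frac{\nu\left(\overline{\disc 0t}\right)}{t}\,\dd t = \int_{\bbc}\int_{0}^{r}\frac{\mathbf{1}_{\{|w|\le t\}}}{t}\,\dd t\,\dd\nu(w) = \int_{\{|w|\le r\}}\log\frac{r}{|w|}\,\dd\nu(w).
\]
Adding $\logpot{\nu}0 = \int_{\bbc}\log|w|\,\dd\nu(w)$ and splitting the domain into $\{|w|\le r\}$ and $\{|w|>r\}$ gives
\[
\logpot{\nu}0 + \int_{0}^{r}\frac{\nu\left(\overline{\disc 0t}\right)}{t}\,\dd t = \int_{\bbc}\log\max(r,|w|)\,\dd\nu(w).
\]
For the circle-averaging identity itself, I would factor $re^{i\theta}-w = re^{i\theta}\bigl(1-(w/r)e^{-i\theta}\bigr)$ when $r \ge |w|$ (and $-w\bigl(1-(r/w)e^{i\theta}\bigr)$ when $r \le |w|$) and invoke the mean value property for the harmonic function $\zeta \mapsto \log|1-\alpha\zeta|$ on the closed unit disk, valid for $|\alpha|\le 1$, whose value at the origin is $0$. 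Taking absolute values and averaging in $\theta$ produces $\log\max(r,|w|)$. Integrating this pointwise identity against $\dd\nu(w)$ and applying Fubini once more yields
\[
\frac{1}{2\pi}\int_{0}^{2\pi}\logpot{\nu}{re^{i\theta}}\,\dd\theta = \int_{\bbc}\log\max(r,|w|)\,\dd\nu(w),
\]
which matches the previous display and proves the first identity.

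For the radial case, if $\nu$ is invariant under rotations about the origin, then the change of variables $w \mapsto e^{i\theta}w$ inside $\logpot{\nu}{re^{i\theta}} = \int\log|re^{i\theta}-w|\,\dd\nu(w)$ shows that $\logpot{\nu}{re^{i\theta}}$ does not depend on $\theta$, so the circle average on the right-hand side of the first identity collapses to $\logpot{\nu}r$ and the second identity follows. The only nontrivial ingredient is the circle-averaging identity, which is classical; everything else is bookkeeping with Fubini under the standing assumption that $\logpot{\nu}0$ is finite, so that no subtraction of infinities occurs.
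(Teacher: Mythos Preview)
Your argument is correct and complete. Both sides of the first identity are computed to equal $\int_{\bbc}\log\max(r,|w|)\,\dd\nu(w)$: the left side via Tonelli on the radial integral, the right side via the classical circle-average identity (which is in fact stated later in the paper as equation~\eqref{eq:log_int}). The radial case then follows immediately from rotation invariance of $\logpot{\nu}{\cdot}$, exactly as you say.

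Your route is different from the paper's. The paper simply invokes the Poisson--Jensen formula for subharmonic functions from \cite[Theorem~4.10]{SaT}, applied to $\logpot{\nu}{z}$, together with integration by parts. That is a one-line citation of a general theorem, whereas you give a self-contained elementary proof using only Tonelli and the circle-average identity. Your approach has the advantage of being fully explicit and of reusing an identity the paper needs anyway; the paper's approach has the advantage of placing the result in the standard potential-theoretic framework where Poisson--Jensen applies uniformly to all subharmonic functions. Both are perfectly adequate here. Your remark about needing $\logpot{\nu}{0}$ finite is a fair caveat that the paper leaves implicit.
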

\begin{proof}
This follows from the Poisson-Jensen formula in the disk $\disc 0r$
(\cite[Theorem 4.10]{saff2013logarithmic}), applied to the subharmonic
function $\logpot{\nu}z$, and integration by parts.
\end{proof}
For a function $\phi:\bbc\mapsto\bbr$, with $\phi_{x},\phi_{y}\in L^{2}\left(\bbc\right)$,
we recall 
\[
\Dnorm{\phi}=\int_{\bbc}\left(\phi_{x}^{2}+\phi_{y}^{2}\right)\,\dd m\left(z\right)=\int_{\bbc}\left|\nabla\phi\left(z\right)\right|^{2}\,\dd m\left(z\right).
\]
Let $\nu,\mu\in\probmeas$ be probability measures with compact support
and finite logarithmic energy, and let $\sigma=\nu-\mu$ be a signed
measure (with $\sigma\left(\bbc\right)=0$). It is known (\cite[Theorem 1.20]{landkof1972foundations},
see also \cite[Proof of Lemma I.1.8]{saff2013logarithmic}) that
\[
\Dnorm{\logpot{\sigma}w}=\int_{\bbc}\left|\nabla\logpot{\sigma}w\right|^{2}\,\dd m\left(w\right)=-2\pi\cdot\logenerg{\nu-\mu}<\infty,
\]
and in particular that $\left|\nabla\logpot{\sigma}w\right|\in L^{2}\left(\bbc\right)$
. We also mention that (as to be expected) $\logenerg{\nu-\mu}\le0$,
with equality if and only if $\nu=\mu$ (\cite[Theorem 1.16]{landkof1972foundations},
\cite[Lemma I.1.8]{saff2013logarithmic}).

The following result allows us to get a lower bound for the `distance'
between two measures, in terms of linear statistics (cf. \cite[Eq. (3.10)]{pritsker2011equidistribution}).
\begin{lem}
\label{lem:lin_stats_for_meas}Suppose $\varphi\in C_{0}^{2}\left(\bbc\right)$
is a compactly supported test function, which is twice continuously
differentiable. Let $\nu,\mu\in\probmeas$ be probability measures
with compact support and finite logarithmic energy. Then
\[
\left|\int_{\bbc}\varphi\left(w\right)\,\dd\nu\left(w\right)-\int_{\bbc}\varphi\left(w\right)\,\dd\mu\left(w\right)\right|\le\frac{1}{\sqrt{2\pi}}\sqrt{\Dnorm{\varphi}}\sqrt{-\logenerg{\nu-\mu}}.
\]
\end{lem}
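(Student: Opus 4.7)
Set $\sigma = \nu - \mu$, a compactly supported signed measure of total mass zero with $\left|\logenerg{\sigma}\right|<\infty$. The goal is to bound $\left|\int_\bbc \varphi\,\dd\sigma\right|$. The plan is to express this pairing through the logarithmic potential, using the distributional Poisson identity $-\Delta_w\logpot{\sigma}{w} = 2\pi\,\sigma$, which follows from $-\tfrac{1}{2\pi}\Delta_w \log|z-w| = \delta_z$. Formally integrating by parts, and using the compact support of $\varphi$ to discard boundary terms, this gives
\[
\int_\bbc \varphi(w)\,\dd\sigma(w) = -\frac{1}{2\pi}\int_\bbc \varphi(w)\,\Delta_w\logpot{\sigma}{w}\,\dd\lebmeas(w) = \frac{1}{2\pi}\int_\bbc \nabla\varphi(w) \cdot \nabla\logpot{\sigma}{w}\,\dd\lebmeas(w).
\]

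Once this identity is established, the Cauchy--Schwarz inequality in $L^2(\bbc)$ yields
\[
\left|\int_\bbc \varphi(w)\,\dd\sigma(w)\right| \le \frac{1}{2\pi}\sqrt{\Dnorm{\varphi}}\,\sqrt{\Dnorm{\logpot{\sigma}{w}}} = \frac{1}{2\pi}\sqrt{\Dnorm{\varphi}}\cdot\sqrt{-2\pi\,\logenerg{\sigma}},
\]
where the last equality is exactly the identity $\Dnorm{\logpot{\sigma}{w}} = -2\pi\logenerg{\nu-\mu}$ recalled in the paragraph just before the lemma. Simplifying produces precisely the desired bound.

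The main technical obstacle is justifying the integration by parts when $\sigma$ is merely a signed measure, so that $\logpot{\sigma}{w}$ is only $L^1_{\mathrm{loc}}$ and not classically differentiable. I would handle this by mollification: pick a smooth, compactly supported approximate identity $\chi_\veps$ and set $\sigma_\veps = \sigma \star \chi_\veps$. Then $\sigma_\veps$ has a smooth, compactly supported density, its logarithmic potential is smooth, and the displayed chain of equalities becomes a classical Green identity; hence the desired inequality holds for $\sigma_\veps$. Passing to the limit is then routine: $\int \varphi\,\dd\sigma_\veps \to \int \varphi\,\dd\sigma$ by continuity of $\varphi$, while the assumption $\left|\logenerg{\sigma}\right|<\infty$ ensures that $\nabla \logpot{\sigma_\veps}{w}$ converges to $\nabla\logpot{\sigma}{w}$ in $L^2(\bbc)$ and hence $\logenerg{\sigma_\veps} \to \logenerg{\sigma}$, so the inequality survives the limit. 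Alternatively, finite energy places $\logpot{\sigma}{w}$ in the homogeneous Sobolev space $\dot{H}^1(\bbc)$, in which the Green pairing against $\varphi \in C_c^2$ is well defined by duality and directly equals $2\pi\int \varphi\,\dd\sigma$, avoiding the mollification altogether.
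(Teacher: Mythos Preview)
Your proof is correct and follows essentially the same route as the paper: write $\sigma=\nu-\mu$, use the distributional identity relating $\sigma$ to $\Delta\logpot{\sigma}{\cdot}$, integrate by parts to a gradient pairing, apply Cauchy--Schwarz, and invoke $\Dnorm{\logpot{\sigma}{\cdot}}=-2\pi\logenerg{\sigma}$. One minor slip: with the paper's convention $\logpot{\sigma}{z}=\int\log|z-w|\,\dd\sigma(w)$ one has $\Delta\logpot{\sigma}{\cdot}=2\pi\sigma$ (not $-2\pi\sigma$), so the sign in your first displayed line is off, but this is harmless since you immediately take absolute values.
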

\begin{proof}
Let us write $\sigma=\nu-\mu$, and recall that $\dd\sigma\left(z\right)=\frac{1}{2\pi}\Delta\logpot{\sigma}z\,\dd m\left(z\right)$
in the sense of distributions. Integrating by parts, and using the
Cauchy\textendash Schwarz inequality, we get
\begin{eqnarray*}
\left|\int_{\bbc}\varphi\left(w\right)\,\dd\sigma\left(w\right)\right| & = & \frac{1}{2\pi}\left|\int_{\bbc}\Delta\varphi\left(w\right)\logpot{\sigma}w\,\dd m\left(w\right)\right|=\frac{1}{2\pi}\left|\int_{\bbc}\nabla\varphi\left(w\right)\cdot\nabla U_{\sigma}\,\dd m\left(w\right)\right|\\
 & \le & \frac{1}{2\pi}\sqrt{\int_{\bbc}\left|\nabla\varphi\left(w\right)\right|^{2}\,\dd m\left(w\right)}\sqrt{\int_{\bbc}\left|\nabla\logpot{\sigma}w\right|^{2}\,\dd m\left(w\right)}\\
 & = & \frac{1}{2\pi}\sqrt{\Dnorm{\varphi}}\sqrt{\Dnorm{\logpot{\sigma}w}}=\frac{1}{\sqrt{2\pi}}\sqrt{\Dnorm{\varphi}}\sqrt{-\logenerg{\nu-\mu}}.
\end{eqnarray*}
\end{proof}

\section{\label{sec:upp_bnd_for_larg_flucs}Probability of large fluctuations
in the number of zeros - Upper bound}

Given $p\ge0$, $p\ne1$, we find in this section an asymptotic upper
bound for the probability of the event $\pr{n\left(r\right)=\left\lfloor pr^{2}\right\rfloor }$,
as $r\to\infty$, where $n\left(r\right)=n_{\gef}\left(r\right)$
is the number of zeros of the GEF inside the disk $\left\{ \left|z\right|\le r\right\} $.
Recall the GEF is given by the random Taylor series
\[
\gef\left(z\right)=\sum_{k=0}^{\infty}\xi_{k}\frac{z^{k}}{\sqrt{k!}},\quad z\in\bbc,
\]
where $\left\{ \xi_{k}\right\} $ is a sequence of independent standard
complex Gaussians. Almost surely all of the zeros of $\gef$ are simple,
so we can ignore multiplicities in this paper. The well-known Edelman-Kostlan
formula (\cite[Sec. 2.4]{hough2009zeros}) implies that the mean number
of zeros per unit area is $\frac{1}{\pi}$, and in particular $\ex{n\left(r\right)}=r^{2}.$
The asymptotic behavior of the variance $\var{n\left(r\right)}$ was
originally computed by Forrester and Honner in \cite{forrester1999exact}:

\[
\var{n\left(r\right)}=\kappa_{1}r+o\left(r\right),\quad r\to\infty,
\]
with an explicit constant $\kappa_{1}$. It was shown in the paper
\cite{nazarov2012correlation} that the normalized random variables
$\frac{n\left(r\right)-r^{2}}{\sqrt{\var{n\left(r\right)}}}$ converge
in distribution to a standard Gaussian random variable (asymptotic
normality).

We wish to find the precise logarithmic asymptotics of the probability
of the event where $n\left(r\right)$ is (very) far from its expected
value $r^{2}$. To formulate our theorem, we define a function $q\left(p\right):\left[0,\infty\right)\to\left[0,e\right]$
as follows:
\begin{enumerate}
\item In case $p\in\left(0,1\right)\cup\left(1,e\right)$, take $q\ne p$
to be the solution of $p\left(\log p-1\right)=q\left(\log q-1\right)$. 
\item In the remaining cases, put $q=e$ for $p=0$, $q=1$ for $p=1$,
and $q=0$ for $p\ge e$.
\end{enumerate}
Note that $q$ can be written explicitly in terms of $p$, using the
Lambert function \cite{corless1996lambertw}.
\begin{thm}
\label{thm:very_large_fluct}Fix $p\in\left[0,\infty\right)\backslash\left\{ 1\right\} $.
With $q=q\left(p\right)$ as above, we have as $r\to\infty$
\[
\pr{n\left(r\right)=\left\lfloor pr^{2}\right\rfloor }=\exp\left(-Z_{p}r^{4}+O\left(r^{2}\log^{2}r\right)\right),
\]
where

\[
Z_{p}=\left|\int_{p}^{q\left(p\right)}x\log x\,\dd x\right|.
\]
\end{thm}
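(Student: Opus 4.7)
The plan is to follow the strategy of Section~\ref{sec:proof_idea}, adapting it from the hole event $\holeevent r$ to the general constraint $\{n(r) = \lfloor pr^{2} \rfloor\}$. With $\alpha$ of order $\log r$, $N$ chosen as in Lemma~\ref{lem:reg_event}, and $L = r(1 + O(r^{-2}))$, I work on the regular event $\regevent$. By Lemma~\ref{lem:reg_event_conclusion}, $n_{\gef}(r)$ and $n_{\truncpoly}$ differ by at most $O(1)$ uniformly; rescaling by $L$ then turns the constraint $n_{\gef}(r) = \lfloor pr^{2} \rfloor$ into $\mu_{\vec z}(D) = p/\alpha + O(r^{-2})$ for the empirical measure of zeros of $\truncscaledpoly$, and smoothing at scale $t = r^{-C_{2}}$ with $C_{2} \ge 4$ changes this mass in $D$ by only a negligible amount.

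Combining the joint-density approximation of Subsection~\ref{subsec:est_joint_dist_P_L} with the negligible probability of $\regevent^{c}$ yields the upper bound
\[
\pr{n_{\gef}(r) = \lfloor pr^{2} \rfloor} \le \exp\!\left(-N^{2}\left[\inf_{\nu \in \calC_{p}} \funccontparam{\alpha}{\nu} - \funccontparam{\alpha}{\mu_{\alpha}}\right] + O(r^{2} \log^{2} r)\right),
\]
where $\calC_{p} = \{\nu \in \probmeas : \nu(D) = p/\alpha\}$ and $\mu_{\alpha}$ is the unconstrained minimizer (uniform on $D(0,\sqrt{\alpha})$). It remains to solve the constrained optimization problem and to identify the value of the infimum.

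By lower semicontinuity and strict convexity of $\funccontparamname{\alpha}$ the minimizer $\minmeassup{p}{\alpha}$ is unique, and I conjecture for $p \in [0,1)$ the explicit form
\[
\dd\minmeassup{p}{\alpha} = \frac{1}{\pi\alpha}\ind{\{|z|\le\sqrt{p}\}}z\,\dd\lebmeas + \frac{q-p}{\alpha}\dd\subslebmeas{|z|=1} + \frac{1}{\pi\alpha}\ind{\{\sqrt{q}\le|z|\le\sqrt{\alpha}\}}z\,\dd\lebmeas,
\]
which reduces to $\minmeassup{0}{\alpha}$ when $p=0$. For $p > 1$ I expect the mirrored structure in which the singular circle sits instead at $|z| = \sqrt{q} < 1$ with mass $(p-q)/\alpha$, flanked by an AC inner disk $D(0,\sqrt{q})$ of mass $q/\alpha$ and an AC outer annulus $\{\sqrt{p} \le |z| \le \sqrt{\alpha}\}$. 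To verify, I would impose the Frostman-type Euler--Lagrange conditions: $\logpot{\nu}{z} - |z|^{2}/(2\alpha)$ is constant on each AC support, equals its supremum on the singular ring, and lies strictly below on the forbidden annuli. Using the explicit radial potentials of uniform annuli, matching of potentials across the inner and outer boundaries of the forbidden region reduces to exactly the defining relation $p(\log p - 1) = q(\log q - 1)$, and a direct calculation then yields $\funccontparam{\alpha}{\minmeassup{p}{\alpha}} - \funccontparam{\alpha}{\mu_{\alpha}} = Z_{p}/\alpha^{2}$, so that $N^{2} \cdot Z_{p}/\alpha^{2} = Z_{p} r^{4}$ gives the desired exponent.

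For the matching lower bound I would construct a near-optimal polynomial whose $N$ deterministic zeros approximate $\minmeassup{p}{\alpha}$, expand it in the basis $\{z^{k}/\sqrt{k!}\}$ to read off target coefficients, and estimate via the independent-Gaussian product density the probability that $\{\xi_{k}\}$ lie in small neighborhoods of those targets. The main obstacle is the constrained variational problem itself: the functional $\funccontparamname{\alpha}$ is non-standard because of the supremum term, the existence of forbidden annuli is not a feature of classical Coulomb-gas minimizers, and a priori one does not even know how many singular rings the minimizer should carry. Justifying the structure above and extracting the clean integral formula $Z_{p} = \left|\int_{p}^{q} x \log x \, \dd x\right|$ requires delicate potential-theoretic work, including verifying that the candidate minimizer saturates the variational inequality with no additional singular components appearing in the forbidden annulus.
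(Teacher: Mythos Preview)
Your upper-bound strategy coincides with the paper's: truncate, pass to the joint density of polynomial zeros, and reduce to minimizing $\funccontparamname{\alpha}$ under the mass constraint. (The paper actually works with the one-sided constraints $\nu(D)\le p/\alpha$ for $p<1$ and $\nu(\overline D)\ge p/\alpha$ for $p>1$ rather than your equality constraint $\nu(D)=p/\alpha$; the minimizers agree since the constraint is active, so this is harmless.) One correction to your conjectured minimizer for $p>1$: the singular circle sits at $\left|z\right|=1$, not at $\left|z\right|=\sqrt{q}$. The constraint is imposed on the unit disk, so the Lagrange-multiplier part of the measure must live on $\partial D$; the structure for $p\in(1,e)$ is an AC disk $\{|z|\le\sqrt{q}\}$, a singular ring at $|z|=1$ of mass $(p-q)/\alpha$, a forbidden annulus $\{1<|z|<\sqrt{p}\}$, and the AC outer annulus. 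Your placement at $\sqrt{q}$ would not satisfy the Euler--Lagrange condition you describe.

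The lower bound is where you genuinely diverge from the paper. You propose to build a near-optimal polynomial with zeros approximating $\minmeassup{p}{\alpha}$, read off target Taylor coefficients, and bound from below the probability that the $\xi_k$ land near those targets. The paper instead uses a direct and much more elementary Rouch\'e argument: set $k_0=\lfloor pr^2\rfloor$ and construct an explicit product event on the $|\xi_k|$ under which the single term $\xi_{k_0}z^{k_0}/\sqrt{k_0!}$ dominates the rest of the series on $|z|=r$, forcing exactly $k_0$ zeros inside. The ``cost'' of suppressing $|\xi_k|$ for $k/r^2\in I_p$ (the interval between $p$ and $q$) is computed term-by-term and summed, yielding precisely $-Z_p r^4$ after an integration-by-parts identity. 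This bypasses the polynomial construction entirely, needs no potential theory, and delivers the exact zero count $n(r)=k_0$ rather than merely ``empirical measure close to $\minmeassup{p}{\alpha}$''. Your approach could in principle be pushed through, but you would still need a perturbation argument to pin down the \emph{exact} value of $n(r)$ in a coefficient neighborhood, and matching the error term $O(r^2\log^2 r)$ would require care; the Rouch\'e route avoids all of this.
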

\begin{rem}
A simple calculation shows
\[
Z_{p}=\begin{cases}
\left|\frac{1}{4}\left[q^{2}\left(2\log q-1\right)-p^{2}\left(2\log p-1\right)\right]\right| & \,p\in\left(0,e\right)\backslash\left\{ 1\right\} ;\\
\frac{1}{4}p^{2}\left(2\log p-1\right) & \,p\ge e.
\end{cases}
\]
The case $Z_{0}=\frac{e^{2}}{4}$, corresponding to the hole event
$\left\{ n\left(r\right)=0\right\} $, follows from the results in
\cite{nishry2012hole} (with a slightly better error term). See Section
\ref{sec:discussion} for a graph of this function.
\end{rem}
In this section we obtain the upper bound in Theorem \ref{thm:very_large_fluct}.
The proof is slightly more general than necessary for proving the
results of this section, as we are going to use it again in Section
\ref{sec:conv_emp_meas}. In Section \ref{sec:low_bnd_for_large_flucs},
we prove the lower bound in Theorem \ref{thm:very_large_fluct}.
\begin{rem}
\label{rem:choice_of_params}Many parameters appear in the course
of the proof. Ultimately they all depend on $p$ and $r$, that appear
in the statement of Theorem \ref{thm:very_large_fluct}. The parameter
$B\ge1$ is fixed (but can be arbitrarily large), the parameter $A$
depends only on $p$, for the other parameters we have
\[
\gamma=t=r^{-C_{2}},\,\lambda=\log r,\,L=r+O\left(\frac{1}{r}\right),
\]
where $C_{2}\ge4$ is fixed. In addition,
\[
N\in\left\{ \left\lfloor \lambda r^{2}\right\rfloor +1,\left\lfloor 2\lambda r^{2}\right\rfloor +1\right\} ,\,\alpha=Nr^{-2}\le3\log r.
\]
\end{rem}

\subsection{\label{subsec:upp_bnd_GEF_trunc}Truncation of the power series}

Let $B\ge1$ be a fixed constant, and suppose that $r>0$ is large.
Denote by $n\left(r\right)=n_{\gef}\left(r\right)$ the number of
zeros of the GEF inside the disk $\disc 0r$. We wish to approximate
$\gef$ by a polynomial, in such a way that the zeros of the polynomial
are close to the zeros of the GEF inside the larger disk $\disc 0{Br}$.

Suppose $\varphi$ is a continuous test function supported on the
disk $\disc 0B$, where $B\ge1$. Let $A\ge1$, $\lambda>16$, and
$\gamma=r^{-C_{2}}$, with $C_{2}\ge4$. In addition, put $N_{0}=\left\lfloor \lambda r^{2}\right\rfloor +1$,
$N_{1}=\left\lfloor 2\lambda r^{2}\right\rfloor +1$. We now wish
to apply Lemma \ref{lem:reg_event} and Lemma \ref{lem:reg_event_conclusion}
with $\rho=r$. We notice that if $A=O\left(1\right)$, $\lambda=\log r$,
then the conditions of both lemmas are satisfied. We find that there
exist events $\regevent$ and $\kregevent N$, $N\in\left\{ N_{0},\dots,N_{1}\right\} $,
such that
\[
\regevent=\biguplus_{N=N_{0}}^{N_{1}}\kregevent N,\quad\pr{\kregevent c}\le\exp\left(-C\cdot AB^{4}r^{4}\right).
\]
Put $M_{0}=8B^{3}r^{3}$, and $K_{0}=2M_{0}\gamma\le Cr^{3-C_{2}}=O\left(\frac{1}{r}\right)$.
If we write
\[
\truncpoly\left(z\right)=\sum_{k=0}^{N}\xi_{k}\frac{z^{k}}{\sqrt{k!}},\quad z\in\bbc,
\]
then, on the event $\kregevent N$, we have
\begin{eqnarray}
n_{P_{N}}\left(r-K_{0}\right) & \le & n\left(r\right)\le n_{P_{N}}\left(r+K_{0}\right),\quad K_{0}=16B^{3}r^{3}\gamma\le Cr^{3-C_{2}}=O\left(\frac{1}{r}\right),\label{eq:zero_count_poly}\\
\left|n_{\gef}\left(\varphi;r\right)-n_{\truncpoly}\left(\varphi;r\right)\right| & \le & CM_{0}\cdot\omega\left(\varphi;K_{0}r^{-1}\right),\label{eq:lin_stats_poly}
\end{eqnarray}
and 
\begin{equation}
\frac{1}{\left|\xi_{N}\right|}\sum_{k=0}^{N}\left|\xi_{k}\right|^{2}\le\exp\left(Cr^{2}\right).\label{eq:reg_event_bound_for_xi}
\end{equation}

\subsubsection{Choice of the parameter $A$.}

Let $n\left(r\right)$ be the number of zeros of the GEF $\gef\left(z\right)$
in the disk $\left\{ \left|z\right|\le r\right\} $. We assume that
one of the following two events occurs:
\begin{itemize}
\item[Case 1. ]  $n\left(r\right)\le pr^{2}$, where $p\in\left[0,1\right)$.
\item[Case 2. ]  ${\rm I}$: $n\left(r\right)\ge pr^{2}$, where $p\in\left(1,e\right)$.
${\rm II}$: $n\left(r\right)\ge pr^{2}$, where $p\in\left[e,\infty\right)$.
\end{itemize}
We always assume that $r$ is sufficiently large for all the different
asymptotic estimates that we use (this might depend on $p$ in Case
2.II.). We remark that, if $C>0$ is a sufficiently large numerical
constant, then events with probability at most $\exp\left(-Cr^{4}\right)$
would be negligible events in Case 1 and Case 2.I. In Case 2.II. the
same holds with $\exp\left(-Cp^{2}\log p\cdot r^{4}\right)$. Let
$C_{1}>0$ be a sufficiently large numerical constant, we then set
\[
A=C_{1}\left(\binmax 1{p^{2}\log p}\right).
\]
We choose $C_{1}$ such that the event $\kregevent c$ is negligible.

\subsubsection{The parameters $L$ and $\alpha$.}

Let $L>0$ be a large parameter. For $N\in\left\{ N_{0},\dots,N_{1}\right\} $
we set $\alpha=Nr^{-2}$, and remark that $\alpha\le3\log r$. We
will pick the precise value of $L$ later, but in all cases it holds
that $L=r+O\left(\frac{1}{r}\right)$, and therefore $L^{2}=r^{2}+O\left(1\right)$.
In the rest of the section, it will be more convenient to consider
the scaled polynomials
\[
\truncscaledpoly\left(z\right)=\sum_{k=0}^{N}\xi_{k}\frac{\left(Lz\right)^{k}}{\sqrt{k!}}.
\]
Rewriting (\ref{eq:zero_count_poly}) and (\ref{eq:lin_stats_poly})
in terms of $\truncscaledpoly$, we get
\begin{eqnarray}
n_{\truncscaledpoly}\left(\frac{r-K_{0}}{L}\right) & \le & n\left(r\right)\le n_{\truncscaledpoly}\left(\frac{r+K_{0}}{L}\right),\label{eq:zero_count_poly_L}\\
\left|n_{\gef}\left(\varphi;r\right)-n_{\truncscaledpoly}\left(\varphi;\frac{r}{L}\right)\right| & \le & CM_{0}\cdot\omega\left(\varphi;K_{0}r^{-1}\right).\label{eq:lin_stats_poly_L}
\end{eqnarray}

\subsection{\label{subsec:est_joint_dist_P_L}Estimates for the joint distribution
of the zeros of $\protect\truncscaledpoly$}

We denote the zeros of the polynomial $\truncscaledpoly$ by $z_{1},\dots,z_{N}$
(in uniform random order). In many cases it will be convenient to
use the vector notation $\vec z=\lvec zN$. Recall that $N=\alpha L^{2}+O\left(\alpha\right)$.
We will frequently use the notation $\left|\Delta\left(\vec z\right)\right|^{2}=\prod_{j\ne k}\left|z_{j}-z_{k}\right|$
and the (probability) measure
\[
\dd\refmeas\left(w\right)=\frac{L^{2}}{\pi}e^{-L^{2}\left|w\right|^{2}}\,\dd\lebmeas\left(w\right),
\]
where $\lebmeas$ is Lebesgue measure on $\bbc$. Using a change of
variables from the coefficients of the polynomial $\truncscaledpoly$
to the zeros (see Lemma \ref{lem:poly_change_of_vars}, Appendix \ref{sec:joint_density}),
we find that the joint distribution of the zeros, w.r.t. $\lebmeas\left(\vec z\right)$,
the Lebesgue measure on $\bbc^{N}$, is given by 
\begin{equation}
f\left(\vec z\right)=f\left(z_{1},\dots,z_{N}\right)=\normalconst\left|\Delta\left(\vec z\right)\right|^{2}\left(\int_{\bbc}\left|q_{\vec z}\left(w\right)\right|^{2}\,\dd\refmeas\left(w\right)\right)^{-\left(N+1\right)},\label{eq:joint_density}
\end{equation}
where $q_{\vec z}\left(w\right)=\prod_{j=1}^{N}\left(w-z_{j}\right)$
is the monic polynomial corresponding to $\truncscaledpoly$, and
the normalizing constant $\normalconst$ is given by
\begin{eqnarray}
\normalconst & = & \frac{N!\cdot\prod_{j=1}^{N}j!}{\pi^{N}L^{N\left(N+1\right)}}=\exp\left(\frac{1}{2}N^{2}\log\left(\frac{N}{L^{2}}\right)-\frac{3}{4}N^{2}+O\left(N\left(\log N+\log L\right)\right)\right).\nonumber \\
 & = & \exp\left(\frac{1}{2}N^{2}\log\left(\frac{N}{L^{2}}\right)-\frac{3}{4}N^{2}+O\left(L^{2}\log^{2}L\right)\right),\label{eq:normal_const_asymp}
\end{eqnarray}
where we used $\alpha\le3\log r=3\log L+O\left(1\right)$. By Lemma
\ref{lem:bern_markov}, Appendix \ref{sec:joint_density}, we have
\[
S\left(\vec z\right)\defeq\int_{\bbc}\left|q_{\vec z}\left(w\right)\right|^{2}\,\dd\refmeas\left(w\right)\ge\sup_{w\in\bbc}\left\{ \left|q_{\vec z}\left(w\right)\right|^{2}e^{-L^{2}\left|w\right|^{2}}\right\} \defeq A\left(\vec z\right).
\]

\subsubsection{Estimates for $A\left(\protect\vec z\right)$ and $S\left(\protect\vec z\right)$}

In order to bound the density (\ref{eq:joint_density}) from above,
we need a simple lower bound for $A\left(\vec z\right)$. We will
use the identity (\cite[Example 0.5.7]{saff2013logarithmic})
\begin{equation}
\frac{1}{2\pi}\int_{0}^{2\pi}\log\left|te^{i\theta}-z\right|\,\dd\theta=\log\left(\binmax t{\left|z\right|}\right).\label{eq:log_int}
\end{equation}

\begin{claim}
We have
\[
A\left(\vec z\right)\ge\left[\prod_{j=1}^{N}\left(\binmax 1{\left|z_{j}\right|}\right)\right]^{2}\exp\left(-L^{2}\right).
\]
\end{claim}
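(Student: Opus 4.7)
The plan is to obtain the lower bound by restricting the supremum to the unit circle $\{|w|=1\}$ and then using the integral identity (\ref{eq:log_int}) to compute a suitable average of $\log|q_{\vec z}(w)|^2$ there.

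\textbf{Step 1.} Since we take a supremum over all of $\bbc$, we certainly have
\[
A(\vec z) \;\ge\; \sup_{|w|=1}\!\left\{|q_{\vec z}(w)|^2 e^{-L^2|w|^2}\right\} \;=\; e^{-L^2}\sup_{|w|=1}|q_{\vec z}(w)|^2.
\]
This is where the factor $e^{-L^2}$ in the statement comes from, and it is the reason to choose the particular radius $t=1$: any other choice would give a weight $e^{-L^2 t^2}$ and a different radius in the product.

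\textbf{Step 2.} Bound the supremum on the circle from below by the average of the logarithm (since $\sup \ge $ mean):
\[
\log\!\sup_{|w|=1}|q_{\vec z}(w)|^2 \;=\; \sup_{\theta\in[0,2\pi)}\log|q_{\vec z}(e^{i\theta})|^2 \;\ge\; \frac{1}{2\pi}\int_0^{2\pi}\log|q_{\vec z}(e^{i\theta})|^2\,\dd\theta.
\]

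\textbf{Step 3.} Compute the average by factoring $q_{\vec z}(w)=\prod_j(w-z_j)$ and applying the identity (\ref{eq:log_int}) with $t=1$:
\[
\frac{1}{2\pi}\int_0^{2\pi}\log|q_{\vec z}(e^{i\theta})|^2\,\dd\theta \;=\; 2\sum_{j=1}^{N}\frac{1}{2\pi}\int_0^{2\pi}\log|e^{i\theta}-z_j|\,\dd\theta \;=\; 2\sum_{j=1}^{N}\log\binmax{1}{|z_j|}.
\]
Exponentiating and combining with Step 1 gives the claim. There is no genuine obstacle here: the only ``choice'' is to evaluate on the unit circle rather than on some $|w|=t$, and that choice is forced by the shape of the desired bound (unit maxima on the right, $e^{-L^2}$ factor). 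All other steps are routine manipulations with the standard ``sup $\ge$ mean'' inequality and the explicit value of the logarithmic potential of the uniform measure on a circle.
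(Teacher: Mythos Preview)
Your proof is correct and essentially identical to the paper's: both restrict the supremum defining $A(\vec z)$ to the unit circle, apply ``sup $\ge$ mean'' to pass to the angular average of $\log|q_{\vec z}(e^{i\theta})|$, and then evaluate that average using the identity (\ref{eq:log_int}). The only cosmetic difference is that the paper works with $\tfrac12\log A(\vec z)$ throughout rather than first isolating the factor $e^{-L^2}$.
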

\begin{proof}
Using (\ref{eq:log_int}) with $t=1$, the inequality follows by replacing
supremum with an average over the unit circle,
\begin{eqnarray*}
\frac{1}{2}\log A\left(\vec z\right) & = & \sup_{w\in\bbc}\left\{ \log\left|q_{\vec z}\left(w\right)\right|-\frac{L^{2}}{2}\left|w\right|^{2}\right\} \ge\frac{1}{2\pi}\int_{0}^{2\pi}\log\left|q_{\vec z}\left(e^{i\theta}\right)\right|\,\dd\theta-\frac{L^{2}}{2}\\
 & = & \sum_{j=1}^{N}\log\left(\binmax 1{\left|z_{j}\right|}\right)-\frac{L^{2}}{2}.
\end{eqnarray*}
\end{proof}
Now we clearly have,
\begin{claim}
\label{claim:converg_fact}For $b>1$,
\[
\int_{\bbc^{N}}A\left(\vec z\right)^{-b}\,\dd\lebmeas\left(\vec z\right)\le\exp\left(bL^{2}\right)\cdot\left(\frac{Cb}{b-1}\right)^{N}.
\]
\end{claim}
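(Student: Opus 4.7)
The plan is to combine the previous claim's pointwise lower bound on $A(\vec z)$ with a direct computation in polar coordinates. The lower bound $A(\vec z)\ge\bigl[\prod_{j=1}^{N}\max(1,|z_{j}|)\bigr]^{2}\exp(-L^{2})$ gives, after raising to power $-b$,
\[
A(\vec z)^{-b}\le\exp(bL^{2})\prod_{j=1}^{N}\max(1,|z_{j}|)^{-2b}.
\]
Since the right-hand side factors as a product of functions of single variables, Fubini reduces the $N$-fold integral on $\bbc^{N}$ to the $N$-th power of a single integral on $\bbc$.

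Next I would compute that single integral. Splitting $\bbc=\overline{\disc 01}\cup\{|z|>1\}$ and passing to polar coordinates on the outer region yields
\[
\int_{\bbc}\max(1,|z|)^{-2b}\,\dd\lebmeas(z)=\pi+2\pi\int_{1}^{\infty}r^{1-2b}\,\dd r=\pi+\frac{\pi}{b-1}=\frac{\pi b}{b-1},
\]
where convergence of the tail integral uses $b>1$. Substituting this back gives the bound with $C=\pi$ (or any larger absolute constant, if one wishes to absorb multiplicative factors).

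Assembling the pieces, the stated estimate follows immediately with no obstacle to speak of: the only subtle point is that the lower bound on $A(\vec z)$ factorizes as a product over the zeros (so that Fubini applies), and that $b>1$ is exactly the threshold needed for the radial integral $\int_{1}^{\infty}r^{1-2b}\,\dd r$ to converge. This is what makes the bound blow up as $b\to 1^{+}$, which is reflected in the factor $\tfrac{b}{b-1}$ in the final estimate.
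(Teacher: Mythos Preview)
Your proof is correct and follows essentially the same approach as the paper: apply the previous claim's pointwise lower bound on $A(\vec z)$, factor the resulting product over the zeros via Fubini, and compute the single-variable integral in polar coordinates. You even make the constant explicit ($C=\pi$), whereas the paper simply bounds the single integral by $\tfrac{Cb}{b-1}$ without specifying $C$.
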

\begin{proof}
By the previous claim,
\begin{eqnarray*}
\int_{\bbc^{N}}A\left(\vec z\right)^{-b}\,\dd\lebmeas\left(\vec z\right) & \le & \exp\left(bL^{2}\right)\cdot\int_{\bbc^{N}}\left[\prod_{j=1}^{N}\left(\binmax 1{\left|z_{j}\right|}\right)\right]^{-2b}\,\dd\lebmeas\left(\vec z\right)\\
 & = & \exp\left(bL^{2}\right)\cdot\left[\int_{\bbc}\left(\binmax 1{\left|z\right|}\right)^{-2b}\,\dd\lebmeas\left(z\right)\right]^{N}\\
 & \le & \exp\left(bL^{2}\right)\cdot\left(\frac{Cb}{b-1}\right)^{N}.
\end{eqnarray*}
\end{proof}
We will also need a probabilistic upper bound for $S\left(\vec z\right)$
.
\begin{claim}
\label{claim:bound_for_S(z)}On the event $\kregevent N$ we have
\[
S\left(\vec z\right)\le\exp\left(C\alpha\log\alpha\cdot L^{2}\right).
\]
\end{claim}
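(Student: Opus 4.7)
The plan is to exploit the fact that the Hermite-type weight $\dd\mu_L$ makes the monomials $\{w^k\}$ orthogonal, which converts $S(\vec z)$ into an explicit sum in the Taylor coefficients $\xi_k$. The bounds required for $S(\vec z)$ then follow immediately from the coefficient estimates supplied by $\kregevent N$.

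First I would rewrite $q_{\vec z}$ in terms of $\truncscaledpoly$. Since $\truncscaledpoly$ has leading coefficient $\xi_N L^N/\sqrt{N!}$ and shares its zeros with the monic polynomial $q_{\vec z}$, we have
\[
q_{\vec z}(w)=\frac{\sqrt{N!}}{\xi_N L^N}\,\truncscaledpoly(w).
\]
Next I would compute $\int_\bbc |\truncscaledpoly|^2\,\dd\refmeas$. A direct computation in polar coordinates gives $\int_\bbc |w|^{2k}\,\dd\refmeas(w)=k!/L^{2k}$, and the cross terms vanish by the orthogonality of distinct monomials against the rotation-invariant weight. Therefore
\[
S(\vec z)=\frac{N!}{|\xi_N|^{2}L^{2N}}\int_\bbc |\truncscaledpoly(w)|^{2}\,\dd\refmeas(w)=\frac{N!}{|\xi_N|^{2}L^{2N}}\sum_{k=0}^{N}|\xi_k|^{2}.
\]

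On $\kregevent N$, Lemma~\ref{lem:reg_event} gives $|\xi_N|\ge \exp(-r^{2})$ and $\sum_{k=0}^{N}|\xi_k|^{2}\le C\lambda r^{4}$. Stirling's estimate (\ref{eq:bounds_for_factorial}) yields
\[
\log\frac{N!}{L^{2N}}\le N\log\!\Bigl(\frac{N}{eL^{2}}\Bigr)+O(\log N)=N\log\alpha-N+O(\log N),
\]
where we used $N=\alpha L^{2}+O(\alpha)$. Assembling the three contributions and recalling $N=\alpha L^2+O(\alpha)$, $\lambda=\log r$, and $\alpha\le 3\log r$, we get
\[
\log S(\vec z)\le \alpha L^{2}\log\alpha+O(L^{2})+2r^{2}+\log(C\lambda r^{4})\le C\alpha\log\alpha\cdot L^{2},
\]
which is the required bound.

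The computation is essentially mechanical once the orthogonality identity is invoked; the only point deserving care is the accounting with $L$ versus $r$ (since $L=r+O(1/r)$ gives $L^2=r^2+O(1)$, so $N/L^2=\alpha+O(\alpha/L^2)$) and the verification that the logarithmic factors from Stirling and from $\sum|\xi_k|^2$ are absorbed by the dominant term $\alpha\log\alpha\cdot L^{2}$. No serious obstacle is anticipated.
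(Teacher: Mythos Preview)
Your proof is correct and follows essentially the same route as the paper: both establish the identity $S(\vec z)=\dfrac{N!}{|\xi_N|^{2}L^{2N}}\sum_{k=0}^{N}|\xi_k|^{2}$ via the orthogonality of monomials against $\mu_L$ (the paper cites Claim~\ref{claim:integ_wrt_ref_meas} for this), then plug in the bounds on $|\xi_N|$ and $\sum|\xi_k|^{2}$ coming from the definition of $\kregevent N$. The only cosmetic difference is that the paper uses the cruder $N!\le N^{N}$ in place of your Stirling estimate, and packages the coefficient bounds as the single inequality $|\xi_N|^{-2}\sum_{k}|\xi_k|^{2}\le\exp(Cr^{2})$ from Lemma~\ref{lem:reg_event_conclusion}.
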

\begin{proof}
In Claim \ref{claim:integ_wrt_ref_meas}, Appendix \ref{sec:joint_density}
we show that
\[
\int_{\bbc}\left|q_{\vec z}\left(w\right)\right|^{2}\,\dd\refmeas\left(w\right)=\left(\sum_{k=0}^{N}\left|\xi_{k}\right|^{2}\right)\cdot\left(\left|\xi_{N}\right|^{2}\frac{L^{2N}}{N!}\right)^{-1}.
\]
On the event $\kregevent N$, applying (\ref{eq:reg_event_bound_for_xi})
(and using $N!\le N^{N}$), we have,
\[
\left(\left|\xi_{N}\right|^{-2}\sum_{k=0}^{N}\left|\xi_{k}\right|^{2}\right)\cdot\left(\frac{L^{2N}}{N!}\right)^{-1}\le\exp\left(Cr^{2}\right)\left(\frac{N}{L^{2}}\right)^{N}\le\exp\left(CL^{2}+C\alpha L^{2}\log\alpha\right)\le\exp\left(C\alpha\log\alpha\cdot L^{2}\right).
\]
\end{proof}

\subsubsection{Upper bound for the probability}

Consider a set $Z\subset\bbc^{N}$. We think about $Z$ as a collection
of possible `configurations' of the zeros of $\truncscaledpoly$.
We are interested in bounding the probability of these configurations.
We introduce the functional $\funcdiscname:\bbc^{N}\to\bbr$:
\[
\funcdisc{\vec z}=\begin{cases}
2\sup_{w\in\bbc}\left\{ \frac{1}{N}\cdot\log\left|q_{\vec z}\left(w\right)\right|-\frac{L^{2}}{2N}\left|w\right|^{2}\right\} -\frac{1}{N^{2}}\sum_{j\ne k}\log\left|z_{j}-z_{k}\right| & \,\forall j\ne k,\,z_{j}\ne z_{k};\\
\infty & \,\mbox{otherwise}.
\end{cases}.
\]
Notice that the supremum term above is equal to $\frac{1}{2N}\log A\left(\vec z\right)$.
We will show, that at the exponential scale, the probability of the
configurations we consider is bounded above by the minimum of the
functional $\funcdiscname$ over $Z$.

Rewriting the joint density of the zeros (\ref{eq:joint_density}),
we have
\[
f\left(\vec z\right)=\normalconst\left|\Delta\left(\vec z\right)\right|^{2}S\left(\vec z\right)^{-\left(N+1\right)}=\normalconst\exp\left(\sum_{j\ne k}\log\left|z_{j}-z_{k}\right|\right)S\left(\vec z\right)^{-\left(N+1\right)}.
\]
On the event $\kregevent N$ (and using $S\left(\vec z\right)\ge A\left(\vec z\right)$)
we get
\begin{eqnarray*}
S\left(\vec z\right){}^{-\left(N+1\right)} & \le & A\left(\vec z\right){}^{-\left(1+\frac{1}{N}\right)}S\left(\vec z\right)^{\frac{1}{N}}A\left(\vec z\right){}^{-N}\\
 & \le & A\left(\vec z\right){}^{-\left(1+\frac{1}{N}\right)}\exp\left(C\log\alpha\right)\cdot\exp\left(-N\log A\left(\vec z\right)\right).
\end{eqnarray*}
Thus, if we introduce the set
\[
E=Z\cap\setd{\vec z\in\bbc^{N}}{S\left(\vec z\right)\le\exp\left(C\alpha\log\alpha\cdot L^{2}\right)},
\]
then by combining Claim \ref{claim:converg_fact} (with $b=1+\frac{1}{N}$),
Claim \ref{claim:bound_for_S(z)}, and (\ref{eq:normal_const_asymp}),
we obtain
\begin{eqnarray*}
\pr{Z\cap\kregevent N} & \le & \normalconst\cdot\exp\left(C\log\alpha\right)\cdot\int_{E}\exp\left(\sum_{j\ne k}\log\left|z_{j}-z_{k}\right|-N\log A\left(\vec z\right)\right)A\left(\vec z\right)^{-\left(1+\frac{1}{N}\right)}\,\dd\lebmeas\left(\vec z\right)\\
 & \le & \normalconst\cdot\exp\left(C\log\alpha\right)\cdot\left(CN\right)^{N}\cdot\exp\left(-N^{2}\cdot\inf_{\vec z\in E}\funcdisc{\vec z}\right)\\
 & \le & \exp\left(-N^{2}\cdot\inf_{\vec z\in Z}\funcdisc{\vec z}+\frac{1}{2}N^{2}\log\left(\frac{N}{L^{2}}\right)-\frac{3}{4}N^{2}+O\left(L^{2}\log^{2}L\right)\right)\cdot
\end{eqnarray*}
Let $\nu\in\probmeas$ be a probability measure. We now introduce
the functional
\[
\funccontparam{\alpha}{\nu}=2\sup_{w\in\bbc}\left\{ \logpot{\nu}w-\frac{\left|w\right|^{2}}{2\alpha}\right\} -\logenerg{\nu},
\]
where $\logpot{\nu}w$ and $\logenerg{\nu}$ are the logarithmic potential
and the logarithmic energy of the measure $\nu$, respectively (see
notation in Section \ref{sec:intro}). We discuss this functional
in more details in Section \ref{sec:sol_of_energy_prob}.

Define the empirical probability measure of the zeros by
\[
\mu_{\vec z}=\frac{1}{N}\sum_{j=1}^{N}\delta_{z_{j}},
\]
where $\delta_{z}$ is a Dirac delta measure at the point $z\in\bbc$.
A technical issue is the fact that the logarithmic energy of the empirical
measure is not defined. To resolve it, we smoothen the empirical measure
by defining
\[
\mu_{\vec z}^{t}=\mu_{\vec z}\star\lebmeas_{\left|z\right|=t}=\frac{1}{N}\sum_{j=1}^{N}\lebmeas_{\left|z-z_{j}\right|=t},
\]
where $\lebmeas_{\left|z-w\right|=t}$ is the (normalized) Lebesgue
measure on the circle $\left|z-w\right|=t$. We now wish to compare
$\funcdisc{\vec z}$ and $\funccontparam{\alpha}{\mu_{\vec z}^{t}}$.

\subsubsection{\label{subsec:estimate_for_functionals}Comparing $\protect\funcdisc{\protect\vec z}$
and $\protect\funccontparam{\alpha}{\mu_{\protect\vec z}^{t}}$}

We will now show that for $t>0$ sufficiently small we have
\[
\funcdisc{\vec z}\ge\funccontparam{\alpha}{\mu_{\vec z}^{t}}-C\left(\frac{1}{N}\log\frac{1}{t}+\frac{t}{\sqrt{\alpha}}+\frac{1}{L^{2}}\right).
\]
The proof consists of two simple claims.
\begin{claim}
We have
\[
\frac{1}{N^{2}}\sum_{j\ne k}\log\left|z_{j}-z_{k}\right|\le\logenerg{\mu_{\vec z}^{t}}+\frac{C\log\frac{1}{t}}{N}.
\]
\end{claim}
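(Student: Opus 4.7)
The plan is to expand $\logenerg{\mu_{\vec z}^{t}}$ directly as a double sum and apply the mean value identity \eqref{eq:log_int} twice, once in each integration variable. First I would write
\[
\logenerg{\mu_{\vec z}^{t}} \;=\; \frac{1}{N^{2}}\sum_{j,k=1}^{N}I_{jk}, \qquad I_{jk} \;=\; \iint \log|z-w|\,\dd\lebmeas_{|z-z_{j}|=t}(z)\,\dd\lebmeas_{|w-z_{k}|=t}(w).
\]
Applying \eqref{eq:log_int} to the inner integral in $z$ (parametrising $z = z_{j} + te^{i\theta}$) yields $\int \log|z-w|\,\dd\lebmeas_{|z-z_{j}|=t}(z) = \log\max(t, |w-z_{j}|)$, so $I_{jk}$ reduces to the average of $\log\max(t, |w-z_{j}|)$ over the circle $|w - z_{k}| = t$.

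Next I would split into off-diagonal and diagonal contributions. For $j \ne k$, the pointwise inequality $\log\max(t,s) \ge \log s$ lets me drop the max, and a second application of \eqref{eq:log_int} (now on the circle of radius $t$ around $z_{k}$) gives
\[
I_{jk} \;\ge\; \int \log|w - z_{j}|\,\dd\lebmeas_{|w-z_{k}|=t}(w) \;=\; \log\max(t, |z_{j}-z_{k}|) \;\ge\; \log|z_{j}-z_{k}|.
\]
For the diagonal case $j=k$, every $w$ on the circle $|w-z_{j}| = t$ satisfies $|w-z_{j}| = t$, so the integrand $\log\max(t, |w-z_{j}|)$ is identically $\log t$ and $I_{jj} = \log t$ exactly.

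Summing these contributions gives
\[
N^{2}\logenerg{\mu_{\vec z}^{t}} \;\ge\; \sum_{j\ne k}\log|z_{j}-z_{k}| + N\log t,
\]
which rearranges to the desired bound with the absolute constant $C=1$. There is no genuine obstacle here; the only care needed is in isolating the diagonal contribution, which is the sole source of the $N^{-1}\log(1/t)$ correction term and the reason smoothing at scale $t$ is necessary to make the logarithmic energy finite.
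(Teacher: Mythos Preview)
Your proof is correct and follows essentially the same route as the paper: expand $\logenerg{\mu_{\vec z}^{t}}$ as a double sum of circle--circle averages $I_{jk}$, use the identity \eqref{eq:log_int} (equivalently, subharmonicity of $\log|\cdot|$) to get $I_{jk}\ge\log|z_j-z_k|$ for $j\ne k$, and observe that each diagonal term equals $\log t$, yielding the stated bound with $C=1$. The only cosmetic difference is that the paper phrases the off-diagonal step as ``the logarithm is subharmonic'' rather than invoking \eqref{eq:log_int} twice and dropping the $\max$.
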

\begin{proof}
Note that
\[
\int\log\left|z-w\right|\,\dd\lebmeas_{\left|w-a\right|=t}=\frac{1}{2\pi}\int_{0}^{2\pi}\log\left|z-a+te^{i\theta}\right|\,\dd\theta=\log\left(\binmax{\left|z-a\right|}t\right).
\]
Since the logarithm is a subharmonic function, we have
\begin{eqnarray*}
\frac{1}{N^{2}}\sum_{j\ne k}\log\left|z_{j}-z_{k}\right| & \le & \frac{1}{N^{2}}\sum_{j\ne k}\int\log\left|z-w\right|\,\dd\lebmeas_{\left|z-z_{j}\right|=t}\dd\lebmeas_{\left|w-z_{k}\right|=t}\\
 & \le & \frac{1}{N^{2}}\sum_{j,k=1}^{N}\int\log\left|z-w\right|\,\dd\lebmeas_{\left|z-z_{j}\right|=t}\dd\lebmeas_{\left|w-z_{k}\right|=t}+\frac{C\log\frac{1}{t}}{N}\\
 & = & \int\log\left|z-w\right|\,\dd\mu_{\vec z}^{t}\left(z\right)\dd\mu_{\vec z}^{t}\left(w\right)+\frac{C\log\frac{1}{t}}{N}.
\end{eqnarray*}
\end{proof}
Let us write $B_{\alpha}\left(\nu\right)=\funccontparam{\alpha}{\nu}+\logenerg{\nu}=2\cdot\sup\setd{\logpot{\nu}w-\frac{\left|w\right|^{2}}{2\alpha}}{w\in\bbc}.$
By Claim \ref{claim:sup_log_pot} in Appendix \ref{sec:poten_theory},
we have
\[
B_{\alpha}\left(\nu\right)=2\sup_{\left|w\right|\le\sqrt{\alpha}}\left\{ \logpot{\nu}w-\frac{\left|w\right|^{2}}{2\alpha}\right\} .
\]

\begin{claim}
For $t>0$ sufficiently small, we have
\[
\frac{1}{N}\log A\left(\vec z\right)=2\sup_{w\in\bbc}\left\{ \frac{1}{N}\log\left|q_{\vec z}\left(w\right)\right|-\frac{L^{2}}{2N}\left|w\right|^{2}\right\} \ge B_{\alpha}\left(\mu_{\vec z}^{t}\right)-C\left(\frac{1}{L^{2}}+\frac{t}{\sqrt{\alpha}}\right).
\]
\end{claim}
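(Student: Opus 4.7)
The plan is to compare the two suprema pointwise, using the fact that $U_{\mu_{\vec z}^t}$ is the circular average of $U_{\mu_{\vec z}}$. Recall that for any $w\in\bbc$,
\[
U_{\mu_{\vec z}^t}(w)=\int \log|w-z|\,\dd\mu_{\vec z}^t(z)=\int_0^{2\pi} U_{\mu_{\vec z}}\!\bigl(w+te^{i\theta}\bigr)\frac{\dd\theta}{2\pi},
\]
so in particular there exists $\theta^{*}=\theta^{*}(w)\in[0,2\pi)$ with $U_{\mu_{\vec z}}(w+te^{i\theta^{*}})\ge U_{\mu_{\vec z}^{t}}(w)$.

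First, by Claim \ref{claim:sup_log_pot} in Appendix \ref{sec:poten_theory}, I may pick $w^{*}\in\bbc$ with $|w^{*}|\le\sqrt{\alpha}$ that attains (or, after a standard compactness argument, nearly attains) the supremum in $\frac{1}{2}B_{\alpha}(\mu_{\vec z}^{t})$. Set $w'=w^{*}+te^{i\theta^{*}(w^{*})}$. Then $w'$ is admissible in the supremum defining $\frac{1}{N}\log A(\vec z)/2$, and
\[
\frac{1}{N}\log|q_{\vec z}(w')|=U_{\mu_{\vec z}}(w')\ge U_{\mu_{\vec z}^{t}}(w^{*}).
\]

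Next, I control the quadratic term. Since $|w^{*}|\le\sqrt{\alpha}$ and $|w'-w^{*}|=t$,
\[
|w'|^{2}\le|w^{*}|^{2}+2t|w^{*}|+t^{2}\le|w^{*}|^{2}+2t\sqrt{\alpha}+t^{2}.
\]
From $N=\alpha L^{2}+O(\alpha)$ one gets $L^{2}/N=(1/\alpha)\bigl(1+O(1/L^{2})\bigr)$, so for $|w^{*}|\le\sqrt{\alpha}$,
\[
\frac{L^{2}|w^{*}|^{2}}{2N}=\frac{|w^{*}|^{2}}{2\alpha}+O\!\left(\frac{1}{L^{2}}\right),\qquad \frac{L^{2}(2t\sqrt{\alpha}+t^{2})}{2N}=\frac{t}{\sqrt{\alpha}}+O\!\left(\frac{t^{2}}{\alpha}\right)+O\!\left(\frac{t}{L^{2}\sqrt{\alpha}}\right),
\]
and the last two error terms are absorbed into $C\bigl(\frac{1}{L^{2}}+\frac{t}{\sqrt{\alpha}}\bigr)$ (using $t\le\sqrt{\alpha}$, which holds once $t$ is small).

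Combining the displays,
\[
\tfrac{1}{N}\log|q_{\vec z}(w')|-\tfrac{L^{2}}{2N}|w'|^{2}\;\ge\;U_{\mu_{\vec z}^{t}}(w^{*})-\tfrac{|w^{*}|^{2}}{2\alpha}-C\Bigl(\tfrac{1}{L^{2}}+\tfrac{t}{\sqrt{\alpha}}\Bigr)=\tfrac{1}{2}B_{\alpha}(\mu_{\vec z}^{t})-C\Bigl(\tfrac{1}{L^{2}}+\tfrac{t}{\sqrt{\alpha}}\Bigr).
\]
Taking the sup over $w\in\bbc$ on the left-hand side and multiplying by $2$ yields the claim.

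The only substantive point in the argument is the mean-value/subharmonicity step that produces the admissible $w'$ within distance $t$ of $w^{*}$; everything else is bookkeeping of the two small parameters $1/L^{2}$ and $t/\sqrt{\alpha}$ that appear when passing from the discrete-looking weight $L^{2}/N$ to the continuous weight $1/\alpha$ and from $w^{*}$ to $w'$. I do not anticipate any genuine obstacle here---one simply has to be careful that the bound $|w^{*}|\le\sqrt{\alpha}$ (granted by the previous claim) is what keeps the shift error of size $O(t\sqrt{\alpha})$ in $|w'|^{2}$ from blowing up after dividing by $\alpha$.
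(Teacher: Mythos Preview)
Your proof is correct and follows essentially the same route as the paper's: both use Claim~\ref{claim:sup_log_pot} to restrict to $|w|\le\sqrt{\alpha}$, express $U_{\mu_{\vec z}^t}$ as the circular average of $U_{\mu_{\vec z}}$, and then control the discrepancy between the weights $\frac{L^2}{2N}|\cdot|^2$ and $\frac{1}{2\alpha}|\cdot|^2$ together with the shift by $te^{i\theta}$. The only cosmetic difference is that the paper bounds each term $U_{\mu_{\vec z}}(w+te^{i\theta})-\frac{L^2}{2N}|w+te^{i\theta}|^2$ by $\frac{1}{2N}\log A(\vec z)$ and then averages over $\theta$, whereas you select a single $\theta^*$ witnessing the mean-value inequality and evaluate the sup defining $A(\vec z)$ at the corresponding point $w'$; these are two equivalent ways of organizing the same estimate.
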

\begin{proof}
Using the fact $\mu_{\vec z}^{t}$ is the convolution of $\mu_{\vec z}$
and $\lebmeas_{\left|z\right|=t}$ (and the linearity of $\mu\mapsto\logpot{\mu}w$),
we can write
\[
B_{\alpha}\left(\mu_{\vec z}^{t}\right)=2\sup_{\left|w\right|\le\sqrt{\alpha}}\left\{ \logpot{\mu_{\vec z}^{t}}w-\frac{\left|w\right|^{2}}{2\alpha}\right\} =2\sup_{\left|w\right|\le\sqrt{\alpha}}\left\{ \frac{1}{2\pi}\int_{0}^{2\pi}\logpot{\mu_{\vec z}}{w+te^{i\theta}}\,\dd\theta-\frac{\left|w\right|^{2}}{2\alpha}\right\} .
\]
By the definition of $A\left(\vec z\right)$, for any $\theta\in\left[0,2\pi\right]$,
\begin{eqnarray*}
\logpot{\mu_{\vec z}}{w+te^{i\theta}}-\frac{L^{2}}{2N}\left|w+te^{i\theta}\right|^{2} & = & \frac{1}{N}\sum_{j=1}^{N}\log\left|w+te^{i\theta}-z_{j}\right|-\frac{L^{2}}{2N}\left|w+te^{i\theta}\right|^{2}\\
 & = & \frac{1}{N}\log\left|q_{\vec z}\left(w+te^{i\theta}\right)\right|-\frac{L^{2}}{2N}\left|w+te^{i\theta}\right|^{2}\le\frac{1}{2N}\log A\left(\vec z\right).
\end{eqnarray*}
Recall that $\frac{N}{L^{2}}=\frac{\alpha r^{2}}{L^{2}}=\alpha\left(1+O\left(L^{-2}\right)\right)$.
Notice that for $\left|w\right|\le\sqrt{\alpha}$ and for $t\le1$,
we have
\[
\frac{L^{2}}{2N}\left|w+te^{i\theta}\right|^{2}\le\frac{L^{2}}{2N}\left|w\right|^{2}+\frac{Ct}{\sqrt{\alpha}}\le\frac{\left|w\right|^{2}}{2\alpha}\left(\frac{\alpha L^{2}}{N}\right)+\frac{Ct}{\sqrt{\alpha}}\le\frac{\left|w\right|^{2}}{2\alpha}+\frac{C}{L^{2}}+\frac{Ct}{\sqrt{\alpha}}.
\]
Hence,
\begin{align*}
\frac{1}{2\pi}\int_{0}^{2\pi}\logpot{\mu_{\vec z}}{w+te^{i\theta}}\,\dd\theta-\frac{\left|w\right|^{2}}{2\alpha} & \le\frac{1}{2\pi}\int_{0}^{2\pi}\left[\logpot{\mu_{\vec z}}{w+te^{i\theta}}-\frac{L^{2}}{2N}\left|w+te^{i\theta}\right|^{2}\right]\,\dd\theta+\frac{C}{L^{2}}+\frac{Ct}{\sqrt{\alpha}}\\
 & \le\frac{1}{N}\sup_{w\in\bbc}\left\{ \log\left|q_{\vec z}\left(w\right)\right|-\frac{1}{2}L^{2}\left|w\right|^{2}\right\} +\frac{C}{L^{2}}+\frac{Ct}{\sqrt{\alpha}}.
\end{align*}
\end{proof}

\subsection{\label{subsec:reduc_mod_energ_problem}Reduction to a modified weighted
energy problem}

For a set $Z_{N}\subset\bbc^{N}$, after combining the estimates above
(and using $N=\alpha L^{2}+O\left(\alpha\right)$, $\alpha\le C\log L$)
, we find that
\begin{align}
\pr{Z_{N}\cap\kregevent N} & \le\label{eq:prob_upp_bnd_est}\\
 & \exp\left(-N^{2}\left[\inf_{\vec z\in Z_{N}}\funccontparam{\alpha}{\mu_{\vec z}^{t}}-\frac{1}{2}\log\left(\frac{N}{L^{2}}\right)+\frac{3}{4}\right]+L^{2}\log L\cdot O\left(\log L+\log\frac{1}{t}+tL^{2}\sqrt{\log L}\right)\right)\cdot\nonumber 
\end{align}

\subsubsection{Choosing the parameters $t$ and $L$}

We now choose $t=r^{-C_{2}}$ (with $C_{2}\ge4$) and recall that
$\gamma=r^{-C_{2}}$, $K_{0}=16B^{3}r^{3}\gamma\le Cr^{3-C_{2}}=O\left(\frac{1}{r}\right)$.
Consider again the two cases we described at the beginning of the
section. In Case 1, we set $L=\left(1+t\right)^{-1}\left(r-K_{0}\right)$.
Since $L=r+O\left(\frac{1}{r}\right)$, and using (\ref{eq:zero_count_poly_L})
we find
\begin{equation}
n_{\truncscaledpoly}\left(1+t\right)=n_{\truncpoly}\left(r-K_{0}\right)\le n\left(r\right)\le pr^{2}=\frac{pN}{\alpha}\implies\mu_{\vec z}^{t}\left(D\right)\le\frac{p}{\alpha}.\label{eq:few_zeros}
\end{equation}
Similarly, in Case 2, we set $L=\left(1-t\right)^{-1}\left(r+K_{0}\right)$,
and get
\begin{equation}
n_{\truncscaledpoly}\left(1-t\right)=n_{\truncpoly}\left(r+K_{0}\right)\ge n\left(r\right)\ge pr^{2}=\frac{pN}{\alpha}\implies\mu_{\vec z}^{t}\left(\overline{D}\right)\ge\frac{p}{\alpha}.\label{eq:many_zeros}
\end{equation}
Define the set
\[
L_{\varphi,\tau,\lambda}^{N}=L_{\varphi,\tau,\lambda}^{N}\left(t\right)=\setd{\vec z}{\left|\frac{1}{N}\sum_{j=1}^{N}\varphi\left(z_{j}\right)-\tau\right|\ge\lambda+\omega\left(\varphi;t\right)}.
\]
Notice that $\left|\varphi\left(z_{j}\right)-\int\varphi\,\dd\lebmeas_{\left|z-z_{j}\right|=t}\right|\le\omega\left(\varphi;t\right)$,
hence, if $\vec z\in L_{\varphi,\tau,,\lambda}^{N}$, then
\[
\left|\int_{\bbc}\varphi\left(w\right)\,\dd\mu_{\vec z}^{t}\left(w\right)-\tau\right|\ge\lambda.
\]

\subsubsection{Completing the reduction}

For $x\ge0$, we define the following sets of measures:
\begin{eqnarray*}
\calF{}_{x} & = & \setd{\nu\in\probmeas}{\nu\left(D\right)\le\frac{x}{\alpha}},\\
\calM_{x} & = & \setd{\nu\in\probmeas}{\nu\left(\overline{D}\right)\ge\frac{x}{\alpha}},\\
\calL_{\varphi,\tau,x} & = & \setd{\nu\in\probmeas}{\left|\int_{\bbc}\varphi\left(w\right)\,\dd\nu\left(w\right)-\tau\right|\ge x}.
\end{eqnarray*}
Clearly we have,
\begin{equation}
\setd{\mu_{\vec z}^{t}}{\mu_{\vec z}^{t}\left(D\right)\le\frac{p}{\alpha}}\subset\calF_{p},\,\setd{\mu_{\vec z}^{t}}{\mu_{\vec z}^{t}\left(\overline{D}\right)\ge\frac{p}{\alpha}}\subset\calM_{p},\,\setd{\mu_{\vec z}^{t}}{\vec z\in L_{\varphi,\tau,\lambda}^{N}}\subset\calL_{\varphi,\tau,\lambda}.\label{eq:inclusion_of_meas}
\end{equation}
We thus reduced an estimate for the probability, to a minimization
problem for a functional acting on (general) probability measures.

\subsection{The upper bound in Theorem \ref{thm:very_large_fluct}}

We remind the two cases of the theorem: 
\begin{itemize}
\item[Case 1. ]  $n\left(r\right)\le pr^{2}$, where $p\in\left[0,1\right)$.
\item[Case 2. ]  ${\rm I}$: $n\left(r\right)\ge pr^{2}$, where $p\in\left(1,e\right)$.
${\rm II}$: $n\left(r\right)\ge pr^{2}$, where $p\in\left[e,\infty\right)$.
\end{itemize}
Recall $\alpha\le3\log r$, $t=r^{-C_{2}}$ ($C_{2}\ge4$), $L=r+O\left(\frac{1}{r}\right)$,
and $N=\alpha r^{2}=\alpha L^{2}+O\left(\alpha\right)$. In Case 1,
using (\ref{eq:prob_upp_bnd_est}), (\ref{eq:few_zeros}) and the
first inclusion (\ref{eq:inclusion_of_meas}), we get
\begin{multline*}
\log\pr{\left\{ n\left(r\right)\le pr^{2}\right\} \cap\kregevent N}\le\log\pr{\left\{ n_{\truncscaledpoly}\left(1+t\right)\le\frac{pN}{\alpha}\right\} \cap\kregevent N}\\
\quad\,\quad\le-N^{2}\left[\inf_{\nu\in\calF_{p}}\funccontparam{\alpha}{\nu}-\frac{1}{2}\log\left(\frac{N}{L^{2}}\right)+\frac{3}{4}\right]+L^{2}\log L\cdot O\left(\log L+\log\frac{1}{t}+tL^{2}\sqrt{\log L}\right)\\
=-N^{2}\left[\inf_{\nu\in\calF_{p}}\funccontparam{\alpha}{\nu}-\frac{1}{2}\log\alpha+\frac{3}{4}\right]+O\left(\frac{N^{2}}{L^{2}}+r^{2}\log^{2}r\right)\quad\quad\quad\quad\quad\quad\quad\quad\quad\\
=-N^{2}\left[\inf_{\nu\in\calF_{p}}\funccontparam{\alpha}{\nu}-\frac{1}{2}\log\alpha+\frac{3}{4}\right]+O\left(r^{2}\log^{2}r\right).\quad\quad\quad\quad\quad\quad\quad\quad\quad\quad\quad\quad\quad\quad\quad\quad
\end{multline*}
By \ref{claim:props_min_meas_F_p}, Subsection \ref{subsec:min_meas_p_less_1},
the minimal value of the functional $\funccontparam{\alpha}{\nu}$
over the set $\calF_{p}=\setd{\nu\in\probmeas}{\nu\left(D\right)\le\frac{p}{\alpha}}$
is given by
\begin{eqnarray*}
\inf_{\nu\in\calF_{p}}\funccontparam{\alpha}{\nu} & = & \frac{1}{2}\log\alpha-\frac{3}{4}+\frac{q^{2}\left(2\log q-1\right)}{4\alpha^{2}}-\frac{p^{2}\left(2\log p-1\right)}{4\alpha^{2}},
\end{eqnarray*}
where $q=q\left(p\right)>1$ is the solution of the equation $q\left(\log q-1\right)=p\left(\log p-1\right)$.

Summing up over $N\in\left\{ N_{0},\dots,N_{1}\right\} $, we get
\begin{eqnarray}
\pr{\left\{ n\left(r\right)\le pr^{2}\right\} \cap\regevent} & \le & \sum_{N=N_{0}}^{N_{1}}\pr{\left\{ n_{\truncscaledpoly}\left(1+t\right)\le\frac{pN}{\alpha}\right\} \cap\kregevent N}\nonumber \\
 & \le & \sum_{N=N_{0}}^{N_{1}}\exp\left(-\frac{N^{2}}{4\alpha^{2}}\left[q^{2}\left(2\log q-1\right)-p^{2}\left(2\log p-1\right)\right]+O\left(r^{2}\log^{2}r\right)\right)\nonumber \\
 & = & \exp\left(-\frac{r^{4}}{4}\left[q^{2}\left(2\log q-1\right)-p^{2}\left(2\log p-1\right)\right]+O\left(r^{2}\log^{2}r+\log r\right)\right)\nonumber \\
 & \le & \exp\left(-\frac{1}{4}\left[q^{2}\left(2\log q-1\right)-p^{2}\left(2\log p-1\right)\right]r^{4}+O\left(r^{2}\log^{2}r\right)\right).\label{eq:few_zeros_prob_upp_bnd}
\end{eqnarray}
Notice that $\kregevent c$ is a negligible event, and therefore we
can use the simple bound $\pr{\left\{ n\left(r\right)\le pr^{2}\right\} }\le\pr{\left\{ n\left(r\right)\le pr^{2}\right\} \cap\regevent}+\pr{\kregevent c}$.
The upper bounds in Case 2.I. and Case 2.II. are obtained in a similar
way. We leave the details for the reader. This completes the proof
of the upper bound in Theorem \ref{thm:very_large_fluct}.

\section{\label{sec:sol_of_energy_prob}Modified energy problems}

Let $\nu\in\probmeas$ be a probability measure and $\alpha>0$. In
this section we consider the problem of minimizing the functional

\[
\funccontparam{\alpha}{\nu}=2\sup_{w\in\bbc}\left\{ \logpot{\nu}w-\frac{\left|w\right|^{2}}{2\alpha}\right\} -\logenerg{\nu}\defeq B_{\alpha}\left(\nu\right)-\logenerg{\nu},
\]
over probability measures with compact support, restricted to certain
(closed and convex) subsets of $\probmeas$. We mention that this
functional is lower semi-continuous and strictly convex. It is known
that $\logenerg{\nu}$ is an upper semi-continuous and strictly concave
functional (\cite[Proposition 2.2]{hiai1998maximizing}). Note that
$B_{\alpha}\left(\nu\right)$ is lower semi-continuous and convex
as the supremum of affine functionals. This implies that a unique
minimizer of $\funccontparam{\alpha}{\nu}$ exists over closed and
convex subsets of $\probmeas$ . In a more general setting, it is
proved in \cite[Lemma 29]{zeitouni2010large} that the global minimizer
of $\funccontparam{\alpha}{\nu}$ is the uniform probability measure
on the disk $\disc 0{\sqrt{\alpha}}$.

It will be useful to make the following definition
\[
g_{\nu}\left(z\right)=\logpot{\nu}z-\frac{\left|z\right|^{2}}{2\alpha}-\frac{B_{\alpha}\left(\nu\right)}{2}.
\]
Notice that $g_{\nu}\left(z\right)\le0$ for all $z\in\bbc$.
\begin{rem}
Since in our case the minimizers turn out to be compactly supported,
it is enough to identify the minimizing measure among measures with
compact support (since we can approximate an arbitrary measure using
measures with compact support). In general, some energy problems can
have solutions which are not compactly supported, even in case the
global minimizer has compact support (e.g., \cite[Theorem 1.3]{ghosh2015large}).
\end{rem}

\subsection{The principle of domination}

The key tool that we need from potential theory is a special case
of the principle of domination (\cite[pg. 43]{saff2013logarithmic}).
\begin{thm}
Let $\nu$and $\mu$ be probability measures with compact support
and finite logarithmic energy. If
\[
\logpot{\nu}z\le\logpot{\mu}z+C,\quad\mu-\mbox{a.e. in }z,
\]
then
\[
\logpot{\nu}z\le\logpot{\mu}z+C,\quad\forall z\in\bbc.
\]
\end{thm}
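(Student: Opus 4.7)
The plan is to apply the classical maximum principle of potential theory to the delta-subharmonic function $f(z) \defeq \logpot{\nu}z - \logpot{\mu}z - C$. Although $f$ is only the difference of two subharmonic functions and hence not subharmonic in general, it \emph{is} subharmonic on $\Omega \defeq \bbc \setminus \mathrm{supp}(\mu)$, because $\mu$ has no mass on $\Omega$ and so $\logpot{\mu}z$ is harmonic there. Moreover, since both $\mu$ and $\nu$ are compactly supported probability measures, each potential admits the expansion $\log|z| + O(1/|z|)$ at infinity, so $f(z) \to -C$ as $|z| \to \infty$; and a direct comparison of the hypothesis with the symmetric identity $\int \logpot{\nu}w\,\dd\mu(w) = \int \logpot{\mu}w\,\dd\nu(w)$ forces $C \ge 0$ whenever the hypothesis is nontrivial, so $\limsup_{|z|\to\infty} f(z) \le 0$.

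With $f$ subharmonic and bounded above on $\Omega$, the classical maximum principle reduces the problem to checking that $\limsup_{z \to \zeta,\, z \in \Omega} f(z) \le 0$ for every $\zeta \in \partial \Omega \subseteq \mathrm{supp}(\mu)$. The hard part will be bridging the gap between the hypothesis ``$f \le 0$ $\mu$-almost everywhere'' and pointwise (or at least quasi-everywhere) control on $\partial \Omega$, because $\mu$ may assign positive mass only to a proper subset of $\mathrm{supp}(\mu)$, and $\partial \Omega$ itself might even be $\mu$-null.

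The standard potential-theoretic bridge uses two facts that hold under the finite-energy assumption: (a) $\mu$ assigns zero mass to every polar set, and (b) the logarithmic potentials $\logpot{\mu}z$ and $\logpot{\nu}z$ are quasi-continuous, i.e.\ continuous off polar sets of arbitrarily small capacity. Together these upgrade ``$f \le 0$ $\mu$-a.e.'' to ``$f \le 0$ quasi-everywhere on $\partial \Omega$,'' after which the generalized maximum principle for subharmonic functions (which tolerates a polar exceptional set on the boundary) delivers $f \le 0$ throughout $\Omega$. For $z \in \mathrm{supp}(\mu)$ itself, the inequality extends by approaching $z$ from $\Omega$ and using upper semi-continuity of $\logpot{\nu}z$, with the case $\logpot{\mu}z = -\infty$ being trivial since the target inequality then holds automatically.
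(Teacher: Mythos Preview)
The paper does not prove this theorem; it is quoted from \cite[p.~43]{SaT} (the principle of domination). Your overall strategy---apply a maximum principle to $f=\logpot{\nu}{\cdot}-\logpot{\mu}{\cdot}-C$ on $\Omega=\bbc\setminus\mathrm{supp}(\mu)$---is the standard one, but several of the steps are not justified as written. The claim that the identity $\int\logpot{\nu}{w}\,\dd\mu(w)=\int\logpot{\mu}{w}\,\dd\nu(w)$ forces $C\ge0$ is incorrect: integrating the hypothesis against $\mu$ gives only $\int\logpot{\mu}{w}\,\dd\nu(w)\le\logenerg{\mu}+C$, from which $C\ge0$ does not follow. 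More importantly, your ``bridge'' from $\mu$-a.e.\ to quasi-everywhere does not follow from (a) and (b): fact (a) says that polar sets are $\mu$-null, which is the wrong direction---you would need the specific $\mu$-null set $\{f>0\}\cap\mathrm{supp}(\mu)$ to be polar, and quasi-continuity alone does not deliver this. Finally, the extended maximum principle you invoke requires $f$ to be bounded above on $\Omega$, but $\logpot{\mu}{\cdot}$ can be unbounded below near its support even for finite-energy $\mu$, so $f$ need not be bounded above there.

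The proof in \cite{SaT} resolves all of these issues at once by first reducing to the case where $\logpot{\mu}{\cdot}$ is continuous (approximating $\mu$ from below by measures with continuous potential, which is possible precisely because $\mu$ has finite energy). Once $\logpot{\mu}{\cdot}$ is continuous, the function $g=-f=\logpot{\mu}{\cdot}-\logpot{\nu}{\cdot}+C$ is lower semicontinuous, so $\{g<0\}$ is genuinely open; combined with $\mu(\{g<0\})=0$ and the definition of support, this forces $\{g<0\}\cap\mathrm{supp}(\mu)=\emptyset$, i.e.\ $f\le0$ on all of $\mathrm{supp}(\mu)$ with no exceptional set. Then $f$ is upper semicontinuous and globally bounded above on $\bbc$, and the ordinary maximum principle on each component of $\hat\bbc\setminus\mathrm{supp}(\mu)$ finishes the argument (and yields $C\ge0$ as a byproduct rather than requiring it as an input).
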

We apply it using the next claim (cf. \cite[Lemma 29]{zeitouni2010large}).
\begin{claim}
\label{claim:meas_compr}Let $\nu$ and $\mu$ be probability measures
with compact support and finite logarithmic energy. If $\mu-\mbox{a.e. in }z$
we have $g_{\nu}\left(z\right)\le g_{\mu}\left(z\right)$, then $\funccontparam{\alpha}{\mu}\le\funccontparam{\alpha}{\nu}$.
\end{claim}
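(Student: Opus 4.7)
The plan is to rewrite the hypothesis as a potential-theoretic domination relation of the form $U_\nu - U_\mu \le \text{const}$ on the support of $\mu$, then use the principle of domination cited above to upgrade it to an inequality valid pointwise on $\bbc$, and finally integrate against $\nu$ and $\mu$ to obtain the claim.

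More concretely, first I would unpack the hypothesis. Writing out $g_\nu$ and $g_\mu$, the assumption $g_\nu(z)\le g_\mu(z)$ $\mu$-a.e.\ is equivalent to
\[
U_\nu(z)-U_\mu(z)\le \tfrac{1}{2}\bigl(B_\alpha(\nu)-B_\alpha(\mu)\bigr),\qquad \mu\text{-a.e. }z,
\]
since the quadratic terms $-|z|^2/(2\alpha)$ cancel. Since $\nu$ and $\mu$ are assumed to have compact support and finite logarithmic energy, the principle of domination (the theorem stated just above) applies with additive constant $C=\tfrac{1}{2}(B_\alpha(\nu)-B_\alpha(\mu))$, yielding the pointwise bound
\[
U_\nu(z)-U_\mu(z)\le \tfrac{1}{2}\bigl(B_\alpha(\nu)-B_\alpha(\mu)\bigr),\qquad \forall z\in\bbc.
\]

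Next I would integrate this pointwise inequality twice: once against $d\mu$ and once against $d\nu$. Since the log-kernel is symmetric, Fubini gives $\int U_\nu\,\dd\mu=\int U_\mu\,\dd\nu$. Integrating against $d\mu$ produces
\[
\int U_\nu\,\dd\mu-\Sigma(\mu)\le \tfrac{1}{2}\bigl(B_\alpha(\nu)-B_\alpha(\mu)\bigr),
\]
while integrating against $d\nu$ gives
\[
\Sigma(\nu)-\int U_\mu\,\dd\nu\le \tfrac{1}{2}\bigl(B_\alpha(\nu)-B_\alpha(\mu)\bigr).
\]
Adding the two inequalities the cross terms cancel thanks to symmetry, leaving $\Sigma(\nu)-\Sigma(\mu)\le B_\alpha(\nu)-B_\alpha(\mu)$, i.e.\ $B_\alpha(\mu)-\Sigma(\mu)\le B_\alpha(\nu)-\Sigma(\nu)$, which is exactly $\funccontparam{\alpha}{\mu}\le\funccontparam{\alpha}{\nu}$.

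The only delicate point is the hypothesis check for the principle of domination: one needs finite logarithmic energy and compact support, both of which are assumed, plus well-definedness of $B_\alpha(\mu),B_\alpha(\nu)$, which follows because $B_\alpha$ is realized as a supremum over the compact disk $\overline{\disc 0{\sqrt{\alpha}}}$ (by Claim \ref{claim:sup_log_pot}) of a continuous function. Everything else is a formal manipulation; I do not expect any real obstacle.
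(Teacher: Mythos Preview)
Your proposal is correct and follows essentially the same approach as the paper: apply the principle of domination to upgrade the $\mu$-a.e.\ inequality $U_\nu - U_\mu \le \tfrac12(B_\alpha(\nu)-B_\alpha(\mu))$ to a pointwise one, then integrate against both measures and use the symmetry $\int U_\nu\,\dd\mu=\int U_\mu\,\dd\nu$. The paper chains the two integrations (against $\dd\nu$, then via symmetry, then against $\dd\mu$) rather than adding them, but this is a cosmetic difference.
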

\begin{proof}
By the assumption and using the previous theorem, we find
\[
\logpot{\nu}z-\frac{B_{\alpha}\left(\nu\right)}{2}\le\logpot{\mu}z-\frac{B_{\alpha}\left(\mu\right)}{2},\quad z\in\bbc.
\]
Now,
\begin{eqnarray*}
\logenerg{\nu} & = & \int_{\bbc}\logpot{\nu}z\,\dd\nu\left(z\right)\le\int_{\bbc}\logpot{\mu}z\,\dd\nu\left(z\right)+\frac{B_{\alpha}\left(\nu\right)-B_{\alpha}\left(\mu\right)}{2}=\int_{\bbc}\logpot{\nu}z\,\dd\mu\left(z\right)+\frac{B_{\alpha}\left(\nu\right)-B_{\alpha}\left(\mu\right)}{2}\\
 & \le & \int_{\bbc}\logpot{\mu}z\,\dd\mu\left(z\right)+B_{\alpha}\left(\nu\right)-B_{\alpha}\left(\mu\right)=\logenerg{\mu}+B_{\alpha}\left(\nu\right)-B_{\alpha}\left(\mu\right).
\end{eqnarray*}
\end{proof}
Therefore, in order to establish that a certain measure $\mu$ minimizes
the value of the functional $\funccontparam{\alpha}{\nu}$ (maybe
over some subset of $\probmeas$), it is sufficient to show that for
any other measure $\nu$, the inequality $g_{\nu}\left(z\right)\le g_{\mu}\left(z\right)$
is satisfied on the support of $\mu$ (this is mainly useful for problems
where the minimizer is a radially symmetric measure).

\subsection{Identifying the minimizing measures}

We consider here the solution of rotation symmetric minimization problems
for the functional $\funccontparam{\alpha}{\nu}$ (that is, a rotation
of a measure that satisfies the constraint continues to satisfy it).
If $\mu$ is a measure that satisfies a rotation symmetric constraint,
then its radial symmetrization $\mu_{{\rm rad}}$ also satisfies this
constraint (we define $\mu_{{\rm rad}}$ as the normalized integral
over all the rotations of $\mu$ w.r.t. the origin). By the radial
symmetry and the convexity of the functional $\funccontparam{\alpha}{\nu}$,
we have that $\funccontparam{\alpha}{\mu_{{\rm rad}}}\le\funccontparam{\alpha}{\mu}$.
This implies the solution is a radial measure, and we will consider
these minimization problems over the set of radially symmetric measures.

\subsubsection{\label{subsec:min_meas_p_less_1}The case $p\in\left[0,1\right)$}

We now consider the minimization problem over the set of measures
\[
\calF_{p}=\calF_{p,\alpha}=\setd{\nu\in\probmeas}{\nu\left(D\right)\le\frac{p}{\alpha}}.
\]
Notice that this is a closed set of measures, since $D$ is an open
set. It is also clearly convex.

We define $q=q\left(p\right)>1$ as the solution of the equation $q\left(\log q-1\right)=p\left(\log p-1\right)$.
We now show that the minimizing measure is given by
\[
\minmeas p\left(z\right)=\minmeassup p{\alpha}\left(z\right)=\frac{1}{\alpha}\left[\ind{\left\{ \left|z\right|\le\sqrt{p}\right\} }z+\ind{\left\{ \sqrt{q}\le\left|z\right|\le\sqrt{\alpha}\right\} }z\right]\cdot\frac{\lebmeas\left(z\right)}{\pi}+\frac{q-p}{\alpha}\subslebmeas{\left|z\right|=1},
\]
where $\subslebmeas{\left|z\right|=1}$ is the normalized Lebesgue
measure on $\left\{ \left|z\right|=1\right\} $, and $\lebmeas$ is
Lebesgue measure on $\bbc$. After a straightforward computation one
obtains the following
\begin{claim}
\label{claim:props_min_meas_F_p} The logarithmic potential of the
measure $\minmeas p$ given above (on its support) is:
\[
\logpot{\minmeas p}z=\frac{\log\alpha}{2}-\frac{1}{2}+\begin{cases}
\frac{\left|z\right|^{2}}{2\alpha} & \,0\le\left|z\right|\le\sqrt{p};\\
\frac{q\left(1-\log q\right)}{2\alpha} & \,\left|z\right|=1;\\
\frac{\left|z\right|^{2}}{2\alpha} & \,\sqrt{q}\le\left|z\right|\le\sqrt{\alpha}.
\end{cases}
\]
In addition, we have
\[
B\left(\minmeas p\right)=\log\alpha-1,\quad\funccontparam{\alpha}{\minmeas p}=\frac{\log\alpha}{2}-\frac{3}{4}+\frac{q^{2}\left(2\log q-1\right)}{4\alpha^{2}}-\frac{p^{2}\left(2\log p-1\right)}{4\alpha^{2}}.
\]
\end{claim}
The results above imply that $g_{\minmeas p}\left(z\right)=0$ on
the support of $\minmeas p$ except for $\left|z\right|=1$, and there
we have $g_{\minmeas p}\left(z\right)=\frac{q\left(1-\log q\right)}{2\alpha}-\frac{1}{2\alpha}$.
It remains to prove the following simple result.
\begin{claim}
Let $\nu$ be a radially symmetric measure with compact support. If
$\nu\left(D\right)\le\frac{p}{\alpha}$, then $g_{\nu}\left(z\right)\le\frac{q\left(1-\log q\right)}{2\alpha}-\frac{1}{2\alpha}$
for all $z$ with $\left|z\right|=1$.
\end{claim}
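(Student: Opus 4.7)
The claim is the last step in applying the domination principle (Claim \ref{claim:meas_compr}) to identify $\minmeas p$ as the minimiser of $\funccontparamname{\alpha}$ on $\calF_p$: indeed $g_\nu \le 0$ everywhere by definition of $B_\alpha(\nu)$, while $g_{\minmeas p}$ vanishes on its support except on the unit circle, where it equals $\tfrac{q(1-\log q)-1}{2\alpha}$, so the only nontrivial pointwise inequality that remains is exactly the one asserted. To bound $g_\nu(1)$ from above I would lower-bound $B_\alpha(\nu)$ by evaluating its defining supremum at a single well-chosen test point. Guided by the geometry of $\minmeas p$, whose inner uniform piece occupies the disk $\overline{\disc 0{\sqrt p}}$, I would take $|w| = \sqrt p$, obtaining
\[
\tfrac{B_\alpha(\nu)}{2} \ge U_\nu(\sqrt p) - \tfrac{p}{2\alpha}.
\]

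To compare $U_\nu(\sqrt p)$ with $U_\nu(1)$ I invoke the radial Jensen formula (Theorem \ref{thm:Jensen_for}): since $\overline{\disc 0t} \subset D$ for $t \in [\sqrt p, 1)$, the hypothesis $\nu(D) \le p/\alpha$ yields
\[
U_\nu(1) - U_\nu(\sqrt p) = \int_{\sqrt p}^{1} \frac{\nu(\overline{\disc 0t})}{t}\,\dd t \;\le\; \frac{p}{\alpha}\log\frac{1}{\sqrt p} \;=\; -\frac{p \log p}{2\alpha}.
\]
Substituting back gives $\tfrac{B_\alpha(\nu)}{2} \ge U_\nu(1) - \tfrac{p(1 - \log p)}{2\alpha}$, and the defining equation $p(1-\log p) = q(1-\log q)$ converts this directly into
\[
g_\nu(1) = U_\nu(1) - \frac{1}{2\alpha} - \frac{B_\alpha(\nu)}{2} \;\le\; \frac{q(1-\log q) - 1}{2\alpha},
\]
which is the required inequality. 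The degenerate case $p=0$ (so $q=e$) is handled identically with $w=0$: the hypothesis forces $\nu(\overline{\disc 0t}) = 0$ for $t<1$, Jensen yields $U_\nu(0)=U_\nu(1)$, and one reads off $g_\nu(1) \le -\tfrac{1}{2\alpha} = g_{\minmeas 0}(1)$.

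I do not anticipate a real obstacle here. Once one recognises $|w| = \sqrt p$ as the correct test point—forced by the structure of $\minmeas p$—the argument is essentially automatic. The main conceptual content is the observation that the defining equation of $q$ is precisely what is needed to make this single-point bound sharp, with equality attained exactly at $\nu = \minmeas p$ itself; this also explains \emph{a posteriori} why $q$ must be defined as the other root of $x \mapsto x(\log x - 1)$.
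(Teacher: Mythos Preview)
Your proof is correct and follows essentially the same route as the paper: lower-bound $B_\alpha(\nu)$ by evaluating at $|w|=\sqrt p$, use the radial Jensen formula with the constraint $\nu(D)\le p/\alpha$ to control $U_\nu(1)-U_\nu(\sqrt p)$, and finish via the defining relation $p(1-\log p)=q(1-\log q)$. Your separate treatment of $p=0$ is not strictly necessary (the main argument goes through with the convention $p\log p\to 0$), but it does no harm.
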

\begin{proof}
Since $\nu$ is radial we have by Jensen's formula (\ref{thm:Jensen_for}),
\[
\logpot{\nu}1-\logpot{\nu}{\sqrt{p}}=\int_{\sqrt{p}}^{1}\frac{\nu\left(\left|z\right|\le t\right)}{t}\,\dd t\le\frac{p}{\alpha}\int_{\sqrt{p}}^{1}\frac{1}{t}\,\dd t=-\frac{p\log p}{2\alpha}.
\]
Now,
\begin{eqnarray*}
g_{\nu}\left(z\right) & = & g_{\nu}\left(1\right)=\logpot{\nu}1-\frac{1}{2\alpha}-\frac{B_{\alpha}\left(\nu\right)}{2}\le\logpot{\nu}1-\frac{1}{2\alpha}-\left[\logpot{\nu}{\sqrt{p}}-\frac{p}{2\alpha}\right]\\
 & \le & \frac{p\left(1-\log p\right)}{2\alpha}-\frac{1}{2\alpha}=\frac{q\left(1-\log q\right)}{2\alpha}-\frac{1}{2\alpha}.
\end{eqnarray*}
\end{proof}
Since for any radially symmetric measure $\nu$ with compact support
we have $g_{\nu}\left(z\right)\le g_{\minmeas p}\left(z\right)$ on
the support of $\minmeas p$, Claim \ref{claim:meas_compr} implies
that $\minmeas p$ is the minimizing measure over the set $\calF_{p}$.

\subsubsection{The case $p>1$}

In a similar way to the previous problem, we now consider the minimization
problem over the convex set of measures
\[
\calM_{p}=\calM_{p,\alpha}=\setd{\nu\in\probmeas}{\nu\left(\overline{D}\right)\ge\frac{p}{\alpha}},
\]
where $p\in\left(1,\alpha\right)$. Notice that in this case the set
$\calM_{p}$ is closed, since $\overline{D}$ is a closed set.

Here there are two cases. In case $p\in\left(1,e\right)$ the minimizing
measure is given by
\[
\minmeas p\left(z\right)=\minmeassup p{\alpha}\left(z\right)=\frac{1}{\alpha}\left[\ind{\left\{ \left|z\right|\le\sqrt{q}\right\} }z+\ind{\left\{ \sqrt{p}\le\left|z\right|\le\sqrt{\alpha}\right\} }z\right]\cdot\frac{\lebmeas\left(z\right)}{\pi}+\frac{p-q}{\alpha}\subslebmeas{\left|z\right|=1},
\]
where $q=q\left(p\right)<1$ is defined as the solution of the equation
$q\left(\log q-1\right)=p\left(\log p-1\right)$. It is not difficult
to check that $g_{\minmeas p}\left(z\right)$ vanishes on the support
of $\minmeas p$, except for $\left|z\right|=1$. That is, the supremum
of $\logpot{\nu}w-\frac{\left|w\right|^{2}}{2\alpha}$ is attained
on the (non-singular) support of $\minmeas p$.

Notice that as $p$ tends to $e$, $q\left(p\right)$ tends to $0$.
In case $p\in\left[e,\alpha\right)$ the measure is given by
\[
\minmeas p\left(z\right)=\frac{p}{\alpha}\subslebmeas{\left|z\right|=1}+\frac{1}{\alpha}\ind{\left\{ \sqrt{p}\le\left|z\right|\le\sqrt{\alpha}\right\} }z\cdot\frac{\lebmeas\left(z\right)}{\pi},
\]
and we see that the $\minmeas p$-measure of the interior of the unit
disk is zero. Again $g_{\minmeas p}\left(z\right)$ vanishes on the
support of $\minmeas p$, except for $\left|z\right|=1$.

The proofs that the measures above are the minimizers of the functional
$\funccontparam{\alpha}{\nu}$ over the set $\calM_{p}$ are very
similar to the case $p<1$, and are left to the reader. A straightforward
computation gives
\[
\funccontparam{\alpha}{\minmeas p}=\begin{cases}
\frac{\log\alpha}{2}-\frac{3}{4}+\frac{q^{2}\left(2\log q-1\right)}{4\alpha^{2}}-\frac{p^{2}\left(2\log p-1\right)}{4\alpha^{2}} & \,p\in\left(1,e\right);\\
\frac{\log\alpha}{2}-\frac{3}{4}-\frac{p^{2}\left(2\log p-1\right)}{4\alpha^{2}} & \,p\in\left[e,\alpha\right).
\end{cases}
\]

\subsection{`Variational' characterization of the minimizers}

The following simple results will be of use in Section \ref{sec:conv_emp_meas}.
\begin{claim}
Let $\mu,\nu\in\probmeas$ be probability measure with finite logarithmic
energy and let $t\in\left[0,1\right]$ be small. Then
\[
\logenerg{t\nu+\left(1-t\right)\mu}=\logenerg{\mu}-2t\left[\logenerg{\mu}-\int_{\bbc}\logpot{\nu}w\,\dd\mu\left(w\right)\right]+O\left(t^{2}\right).
\]
\end{claim}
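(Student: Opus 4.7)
The claim is essentially a second-order expansion of a bilinear form, so the plan is to write out $\Sigma(t\nu + (1-t)\mu)$ using bilinearity of the double integral defining $\logenerg{\cdot}$ and then collect powers of $t$.

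First I would set $\sigma_t = t\nu + (1-t)\mu$ and, noting that the kernel $(z,w) \mapsto \log|z-w|$ is symmetric, expand
\[
\logenerg{\sigma_t} = \int_{\bbc}\int_{\bbc}\log|z-w|\,\dd\sigma_t(z)\dd\sigma_t(w) = (1-t)^2 \logenerg{\mu} + 2t(1-t)\, I(\nu,\mu) + t^2 \logenerg{\nu},
\]
where I introduce the mutual energy $I(\nu,\mu) := \int_{\bbc}\int_{\bbc}\log|z-w|\,\dd\nu(z)\dd\mu(w) = \int_{\bbc} U_\nu(w)\,\dd\mu(w)$, the last equality being Fubini's theorem. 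This is the only substantive step; everything else is algebra.

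Next I would expand $(1-t)^2 = 1 - 2t + t^2$ and $2t(1-t) = 2t - 2t^2$ and regroup:
\[
\logenerg{\sigma_t} = \logenerg{\mu} - 2t\bigl[\logenerg{\mu} - I(\nu,\mu)\bigr] + t^2\bigl[\logenerg{\mu} - 2 I(\nu,\mu) + \logenerg{\nu}\bigr].
\]
The bracket multiplying $t^2$ is exactly $\logenerg{\nu-\mu}$, the logarithmic energy of the signed measure $\nu-\mu$, which is finite whenever both $\logenerg{\mu}$ and $\logenerg{\nu}$ are finite (indeed, as recalled in the preliminaries, $\logenerg{\nu-\mu} \le 0$, and a Cauchy--Schwarz-type argument in the logarithmic kernel ensures $|I(\nu,\mu)| < \infty$ once both energies are finite). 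This gives the stated $O(t^2)$ remainder, with an implicit constant bounded by $|\logenerg{\nu-\mu}|$.

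The only mildly delicate point is justifying that $I(\nu,\mu)$ is finite, since a priori the integrand $\log|z-w|$ is unbounded above and below. The main obstacle, therefore, is not a deep step but the verification that the cross-energy term is finite under the hypothesis of finite self-energies; once this is established the expansion above is immediate by rearranging terms, and the claim follows.
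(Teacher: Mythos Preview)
Your proof is correct and follows essentially the same approach as the paper: expand $\logenerg{t\nu+(1-t)\mu}$ by bilinearity of the double integral and collect terms in $t$. You actually supply more detail than the paper does, identifying the $t^2$ coefficient as $\logenerg{\nu-\mu}$ and addressing the finiteness of the cross-energy, whereas the paper simply writes the expansion in two lines.
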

\begin{proof}
From the definition of the logarithmic energy we have
\begin{eqnarray*}
\logenerg{t\nu+\left(1-t\right)\mu} & = & t^{2}\logenerg{\nu}+\left(1-t\right)^{2}\logenerg{\mu}+2t\left(1-t\right)\int_{\bbc}\logpot{\nu}w\,\dd\mu\left(w\right)\\
 & = & \logenerg{\mu}-2t\left[\logenerg{\mu}-\int_{\bbc}\logpot{\nu}w\,\dd\mu\left(w\right)\right]+O\left(t^{2}\right).
\end{eqnarray*}
\end{proof}
\begin{claim}
Let $\mu,\nu\in\probmeas$ and let $t\in\left[0,1\right]$. Then
\[
B_{\alpha}\left(t\nu+\left(1-t\right)\mu\right)\le B_{\alpha}\left(\mu\right)+t\left[B_{\alpha}\left(\nu\right)-B_{\alpha}\left(\mu\right)\right].
\]
\end{claim}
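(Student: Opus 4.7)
The plan is to use the fact that the logarithmic potential $\logpot{\nu}w$ is linear in the measure $\nu$, so that the expression $\logpot{\nu}w-\frac{|w|^{2}}{2\alpha}$ is affine in $\nu$ for each fixed $w\in\bbc$. Then $B_{\alpha}$ is twice the supremum over $w$ of this affine function, and a supremum of affine functions is convex. This is exactly the structure needed for the claimed bound.

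Concretely, first I would expand
\[
\logpot{t\nu+(1-t)\mu}w=\int_{\bbc}\log|z-w|\,\dd[t\nu+(1-t)\mu](z)=t\logpot{\nu}w+(1-t)\logpot{\mu}w,
\]
which is immediate from linearity of the integral. Subtracting $\frac{|w|^{2}}{2\alpha}$ (which equals $t\cdot\frac{|w|^{2}}{2\alpha}+(1-t)\cdot\frac{|w|^{2}}{2\alpha}$) gives
\[
\logpot{t\nu+(1-t)\mu}w-\frac{|w|^{2}}{2\alpha}=t\left[\logpot{\nu}w-\frac{|w|^{2}}{2\alpha}\right]+(1-t)\left[\logpot{\mu}w-\frac{|w|^{2}}{2\alpha}\right].
\]

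Next I would take the supremum over $w\in\bbc$ on both sides. Using the elementary inequality $\sup_{w}[tf(w)+(1-t)g(w)]\le t\sup_{w}f(w)+(1-t)\sup_{w}g(w)$ (valid because $t,1-t\ge0$), I obtain
\[
\sup_{w\in\bbc}\left\{\logpot{t\nu+(1-t)\mu}w-\frac{|w|^{2}}{2\alpha}\right\}\le t\sup_{w\in\bbc}\left\{\logpot{\nu}w-\frac{|w|^{2}}{2\alpha}\right\}+(1-t)\sup_{w\in\bbc}\left\{\logpot{\mu}w-\frac{|w|^{2}}{2\alpha}\right\}.
\]
Multiplying by $2$ gives $B_{\alpha}(t\nu+(1-t)\mu)\le tB_{\alpha}(\nu)+(1-t)B_{\alpha}(\mu)$, which is exactly $B_{\alpha}(\mu)+t[B_{\alpha}(\nu)-B_{\alpha}(\mu)]$, as required.

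There is no real obstacle here: the argument is a two-line computation resting on linearity of $\logpot{\cdot}w$ in the measure and subadditivity of supremum. The only minor point to note is that both sides are allowed to be $+\infty$ (e.g.\ if $\nu$ or $\mu$ has atoms), but the inequality holds trivially in that case since the right-hand side is then $+\infty$ whenever the finite convex combination blows up.
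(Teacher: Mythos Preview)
Your proof is correct and is essentially identical to the paper's own argument: both use linearity of $\logpot{\cdot}w$ in the measure to write the potential of the convex combination as a convex combination of potentials, split the quadratic term accordingly, and then apply subadditivity of the supremum to conclude convexity of $B_{\alpha}$.
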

\begin{proof}
By the definition of $B\left(\nu\right)$, and the linear properties
of the logarithmic potential,
\begin{eqnarray*}
B_{\alpha}\left(t\nu+\left(1-t\right)\mu\right) & = & 2\sup_{w\in\bbc}\left\{ t\logpot{\nu}w+\left(1-t\right)\logpot{\mu}w-\frac{\left|w\right|^{2}}{2\alpha}\right\} \\
 & \le & t\cdot2\sup_{w\in\bbc}\left\{ \logpot{\nu}w-\frac{\left|w\right|^{2}}{2\alpha}\right\} +\left(1-t\right)\cdot2\sup_{w\in\bbc}\left\{ \logpot{\mu}w-\frac{\left|w\right|^{2}}{2\alpha}\right\} \\
 & = & tB_{\alpha}\left(\nu\right)+\left(1-t\right)B_{\alpha}\left(\mu\right).
\end{eqnarray*}
\end{proof}
In the following lemma we derive a characterization of the minimizers
of the functional $\funccontparam{\alpha}{\nu}$.
\begin{lem}
\label{lem:variational_char}Let $\calC\subset\probmeas$ be a closed
and convex set of measures. Suppose $\mu_{\mathrm{min}}\in\calC$
is the unique minimizer of $\funccontparam{\alpha}{\nu}$ over the
set $\calC$. For $\mu\in\calC$, we have that
\[
\int_{\bbc}\logpot{\nu}w\,\dd\mu\left(w\right)-\frac{B_{\alpha}\left(\nu\right)}{2}\le\int_{\bbc}\logpot{\mu}w\,\dd\mu\left(w\right)-\frac{B_{\alpha}\left(\mu\right)}{2},\quad\forall\nu\in\calC,
\]
if and only if $\mu=\mu_{\mathrm{min}}$. 
\end{lem}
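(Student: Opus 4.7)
The plan is to prove both directions by comparing $\funccontparamname{\alpha}$ at $\mu$ with its value at $\nu$, using the two preceding claims and the known negativity of $\logenerg{\nu-\mu}$. Throughout I will tacitly restrict to $\nu\in\calC$ of finite logarithmic energy (for other $\nu$ both sides of the purported inequality are infinite or the claim is vacuous).

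For the ``only if'' direction, fix any $\nu\in\calC$ and form the convex combination $\mu_{s}=(1-s)\mu_{\mathrm{min}}+s\nu\in\calC$, $s\in[0,1]$. Applying the first claim (an identity) to $\logenerg{\mu_s}$ and the second claim (an inequality) to $B_\alpha(\mu_s)$ and subtracting gives
\[
\funccontparam{\alpha}{\mu_{s}}\le\funccontparam{\alpha}{\mu_{\mathrm{min}}}+s\,\Bigl[B_{\alpha}(\nu)-B_{\alpha}(\mu_{\mathrm{min}})+2\logenerg{\mu_{\mathrm{min}}}-2\int_{\bbc}\logpot{\nu}w\,\dd\mu_{\mathrm{min}}(w)\Bigr]+O(s^{2}).
\]
Since $\mu_{\mathrm{min}}$ minimizes $\funccontparamname{\alpha}$ over $\calC$, the left-hand side is $\ge \funccontparam{\alpha}{\mu_{\mathrm{min}}}$, so the bracket must be nonnegative (divide by $s$ and let $s\to 0^{+}$). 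Rearranging and using $\logenerg{\mu_{\mathrm{min}}}=\int\logpot{\mu_{\mathrm{min}}}{w}\,\dd\mu_{\mathrm{min}}(w)$ gives exactly the variational inequality stated in the lemma.

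For the ``if'' direction I proceed by a direct computation that avoids derivatives. For any $\nu,\mu\in\calC$ with finite logarithmic energy, expanding $\logenerg{\nu-\mu}$ via the symmetry $\int\logpot{\nu}w\,\dd\mu=\int\logpot{\mu}w\,\dd\nu$ yields the identity
\[
\logenerg{\nu}-\logenerg{\mu}=\logenerg{\nu-\mu}+2\Bigl[\int_{\bbc}\logpot{\nu}w\,\dd\mu(w)-\logenerg{\mu}\Bigr].
\]
Combined with $\logenerg{\nu-\mu}\le 0$ (strict unless $\nu=\mu$, as noted earlier in the excerpt), this gives
\[
\funccontparam{\alpha}{\nu}-\funccontparam{\alpha}{\mu}=B_{\alpha}(\nu)-B_{\alpha}(\mu)-[\logenerg{\nu}-\logenerg{\mu}]\ge 2\Bigl[\Bigl(\tfrac{B_{\alpha}(\mu)}{2}-\logenerg{\mu}\Bigr)-\Bigl(\tfrac{B_{\alpha}(\nu)}{2}-\int\logpot{\nu}w\,\dd\mu(w)\Bigr)\Bigr],
\]
with strict inequality unless $\nu=\mu$. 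Now if $\mu$ satisfies the hypothesized variational inequality, the right-hand side is $\ge 0$ for every $\nu\in\calC$, so $\funccontparam{\alpha}{\mu}\le\funccontparam{\alpha}{\nu}$. Strict convexity of $\funccontparamname{\alpha}$ (noted at the start of Section \ref{sec:sol_of_energy_prob}) forces $\mu=\mu_{\mathrm{min}}$.

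The arguments are essentially routine given the two preceding claims and the formula for $\logenerg{\nu-\mu}$; no step is a serious obstacle. The only delicate point is to make sure the first-order upper bound on $\funccontparam{\alpha}{\mu_{s}}$ in the ``only if'' direction is genuine (the inequality for $B_\alpha$ is one-sided, and equality need not hold), but since we are only passing to the right derivative at $s=0$ and comparing against the minimum, a one-sided bound is all that is required.
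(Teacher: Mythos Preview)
Your forward implication (what you label ``only if'': that $\mu_{\min}$ satisfies the variational inequality) is identical to the paper's: form $\mu_s=(1-s)\mu_{\min}+s\nu$, bound $\funccontparam{\alpha}{\mu_s}$ above to first order using the two preceding claims, and read off nonnegativity of the bracket from minimality.

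For the converse you take a genuinely different route. The paper reuses the perturbation argument: if the variational inequality fails strictly for some $\nu$, the bracket is strictly negative, so $\funccontparam{\alpha}{\mu_t}<\funccontparam{\alpha}{\mu}$ for small $t$ and $\mu$ cannot be the minimizer. You instead expand $\logenerg{\nu-\mu}$ directly and use $\logenerg{\nu-\mu}\le 0$ to obtain the \emph{global} inequality $\funccontparam{\alpha}{\nu}\ge\funccontparam{\alpha}{\mu}$ for every $\nu\in\calC$, whence $\mu=\mu_{\min}$ by uniqueness. Your argument is cleaner in that it avoids any limiting step and, as a bonus, it already contains the inequality $-\logenerg{\nu-\mu}\le\funccontparam{\alpha}{\nu}-\funccontparam{\alpha}{\mu}$ that the paper records separately in Claim~\ref{claim:log_energy_low_bnd}. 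The paper's perturbative route has the advantage of being the standard first-variation picture, but it only produces a local statement and needs convexity (or a separate computation like yours) to upgrade to a global one.

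One correction: the signs in your displayed lower bound are flipped. The identity gives
\[
\funccontparam{\alpha}{\nu}-\funccontparam{\alpha}{\mu}=-\logenerg{\nu-\mu}+2\Bigl[\Bigl(\logenerg{\mu}-\tfrac{B_{\alpha}(\mu)}{2}\Bigr)-\Bigl(\int_{\bbc}\logpot{\nu}w\,\dd\mu(w)-\tfrac{B_{\alpha}(\nu)}{2}\Bigr)\Bigr],
\]
so the bracket that is nonnegative under the hypothesis is $\bigl(\logenerg{\mu}-\tfrac{B_\alpha(\mu)}{2}\bigr)-\bigl(\int\logpot{\nu}w\,\dd\mu-\tfrac{B_\alpha(\nu)}{2}\bigr)$, the negative of what you wrote. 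With this fix your conclusion is correct. (Also, your labels ``if'' and ``only if'' are swapped relative to standard usage, but this is cosmetic.)
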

\begin{proof}
Suppose $\mu=\mu_{\mathrm{min}}$ and let $t>0$ be small. For $\nu\in\calC$,
we have that $\mu_{t}=t\nu+\left(1-t\right)\mu_{\mathrm{min}}\in\calC$.
Therefore, using the previous claims
\begin{eqnarray*}
\funccontparam{\alpha}{\mu_{\mathrm{min}}} & \le & \funccontparam{\alpha}{\mu_{t}}\\
 & = & B_{\alpha}\left(t\nu+\left(1-t\right)\mu_{\mathrm{min}}\right)-\logenerg{t\nu+\left(1-t\right)\mu_{\mathrm{min}}}\\
 & \le & B_{\alpha}\left(\mu_{\mathrm{min}}\right)+t\left[B_{\alpha}\left(\nu\right)-B_{\alpha}\left(\mu_{\mathrm{min}}\right)\right]\\
 &  & \quad\quad-\logenerg{\mu_{\mathrm{min}}}+2t\left[\logenerg{\mu_{\mathrm{min}}}-\int_{\bbc}\logpot{\nu}w\,\dd\mu_{\mathrm{min}}\left(w\right)\right]+O\left(t^{2}\right)\\
 & = & \funccontparam{\alpha}{\mu_{\mathrm{min}}}+t\left[B_{\alpha}\left(\nu\right)-B_{\alpha}\left(\mu_{\mathrm{min}}\right)+2\left\{ \logenerg{\mu_{\mathrm{min}}}-\int_{\bbc}\logpot{\nu}w\,\dd\mu_{\mathrm{min}}\left(w\right)\right\} \right]+O\left(t^{2}\right).
\end{eqnarray*}
Since $t>0$ can be arbitrarily small, this implies
\[
B_{\alpha}\left(\nu\right)-B_{\alpha}\left(\mu_{\mathrm{min}}\right)+2\left\{ \logenerg{\mu_{\mathrm{min}}}-\int_{\bbc}\logpot{\nu}w\,\dd\mu_{\mathrm{min}}\left(w\right)\right\} \ge0.
\]
In the other direction, assume
\[
\int_{\bbc}\logpot{\nu}w\,\dd\mu\left(w\right)-\frac{B_{\alpha}\left(\nu\right)}{2}>\int_{\bbc}\logpot{\mu}w\,\dd\mu\left(w\right)-\frac{B_{\alpha}\left(\mu\right)}{2},
\]
for some $\nu\in\calC$. Using the argument above, we can find another
measure $\mu_{t}$ such that $\funccontparam{\alpha}{\mu_{t}}<\funccontparam{\alpha}{\mu}$,
thus $\mu$ is not the minimizer.
\end{proof}
Let $\calC\subset\probmeas$ be a closed and convex set of measures,
and let $\optmeas$ be the measure that minimizes $\funccontparam{\alpha}{\nu}$
over all $\nu\in\calC$. We will need the following simple bound.
\begin{claim}
\label{claim:log_energy_low_bnd}For any $\nu\in\calC$ we have
\[
-\logenerg{\nu-\optmeas}\le\funccontparam{\alpha}{\nu}-\funccontparam{\alpha}{\optmeas}.
\]
\end{claim}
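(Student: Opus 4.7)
The plan is to derive the inequality as an almost immediate consequence of the variational characterization of $\optmeas$ given in Lemma \ref{lem:variational_char}, combined with the standard bilinear expansion of the logarithmic energy.

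First I would expand the logarithmic energy of the signed measure $\nu - \optmeas$ by bilinearity:
\[
\logenerg{\nu-\optmeas} = \logenerg{\nu} - 2\int_{\bbc}\logpot{\optmeas}{w}\,\dd\nu\left(w\right) + \logenerg{\optmeas},
\]
where the cross term has been written symmetrically using Fubini (the kernel $\log|z-w|$ is symmetric, so $\int \logpot{\nu}{w}\,\dd\optmeas(w) = \int \logpot{\optmeas}{w}\,\dd\nu(w)$). One may restrict to $\nu$ with $|\logenerg{\nu}| < \infty$, since otherwise $\funccontparam{\alpha}{\nu}=\infty$ and the desired inequality is trivial.

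Next I would invoke Lemma \ref{lem:variational_char} with the roles of $\mu$ and $\nu$ as stated there (taking $\mu = \optmeas$ as the minimizer): for every $\nu \in \calC$,
\[
\int_{\bbc}\logpot{\nu}{w}\,\dd\optmeas\left(w\right) - \frac{B_{\alpha}\left(\nu\right)}{2} \;\le\; \int_{\bbc}\logpot{\optmeas}{w}\,\dd\optmeas\left(w\right) - \frac{B_{\alpha}\left(\optmeas\right)}{2} \;=\; \logenerg{\optmeas} - \frac{B_{\alpha}\left(\optmeas\right)}{2}.
\]
Using the symmetry of the kernel to replace $\int \logpot{\nu}{w}\,\dd\optmeas(w)$ by $\int \logpot{\optmeas}{w}\,\dd\nu(w)$ and multiplying through by $2$ yields
\[
2\int_{\bbc}\logpot{\optmeas}{w}\,\dd\nu\left(w\right) - 2\logenerg{\optmeas} \;\le\; B_{\alpha}\left(\nu\right) - B_{\alpha}\left(\optmeas\right).
\]

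Finally I would substitute this bound into the expansion of $-\logenerg{\nu-\optmeas}$: writing
\[
-\logenerg{\nu-\optmeas} = -\logenerg{\nu} + \bigl(2\int_{\bbc}\logpot{\optmeas}{w}\,\dd\nu(w) - 2\logenerg{\optmeas}\bigr) + \logenerg{\optmeas},
\]
the parenthesized term is bounded by $B_\alpha(\nu) - B_\alpha(\optmeas)$, giving
\[
-\logenerg{\nu-\optmeas} \;\le\; B_{\alpha}\left(\nu\right) - \logenerg{\nu} - \bigl(B_{\alpha}\left(\optmeas\right) - \logenerg{\optmeas}\bigr) \;=\; \funccontparam{\alpha}{\nu} - \funccontparam{\alpha}{\optmeas},
\]
which is the claim. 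There is no real obstacle here; the only small point to double-check is that the application of Fubini is justified, which it is once we restrict to measures of finite logarithmic energy (otherwise the right-hand side is $+\infty$ and the inequality is vacuous).
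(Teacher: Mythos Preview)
Your proof is correct and follows essentially the same route as the paper: both expand $-\logenerg{\nu-\optmeas}$ bilinearly and bound the cross term using the variational inequality of Lemma~\ref{lem:variational_char}. The only cosmetic difference is that the paper writes the cross term as $2\int\logpot{\nu}{w}\,\dd\optmeas(w)$ rather than invoking the symmetry of the kernel explicitly.
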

\begin{proof}
By the above lemma
\begin{eqnarray*}
\int_{\bbc}\logpot{\nu}w\,\dd\optmeas\left(w\right) & \le & \int_{\bbc}\logpot{\optmeas}w\,\dd\optmeas\left(w\right)+\frac{B_{\alpha}\left(\nu\right)}{2}-\frac{B_{\alpha}\left(\optmeas\right)}{2}\\
 & = & \logenerg{\optmeas}+\frac{B_{\alpha}\left(\nu\right)}{2}-\frac{B_{\alpha}\left(\optmeas\right)}{2},
\end{eqnarray*}
which implies
\begin{eqnarray*}
-\logenerg{\nu-\optmeas} & = & -\logenerg{\nu}+2\int_{\bbc}\logpot{\nu}w\,\dd\optmeas\left(w\right)-\logenerg{\optmeas}\\
 & \le & B_{\alpha}\left(\nu\right)-\logenerg{\nu}-\left[B_{\alpha}\left(\optmeas\right)-\logenerg{\optmeas}\right]=\funccontparam{\alpha}{\nu}-\funccontparam{\alpha}{\optmeas}.
\end{eqnarray*}
\end{proof}

\section{\label{sec:low_bnd_for_large_flucs}Probability of large fluctuations
in the number of zeros - Lower bound}

The goal of this section is to obtain the lower bound in Theorem \ref{thm:very_large_fluct},
that is, we are looking for a lower bound for the probability $\pr{n\left(r\right)=\left\lfloor pr^{2}\right\rfloor }$,
where $p\ge0$, $p\ne1$ is fixed. Let us write
\begin{equation}
k_{0}=k_{0}\left(r,p\right)=\left\lfloor pr^{2}\right\rfloor .\label{eq:k0_def}
\end{equation}
The main idea is to use Rouché's theorem. More precisely, we explicitly
construct an event where the term $\left|\xi_{k_{0}}\frac{z^{k_{0}}}{\sqrt{k_{0}!}}\right|$
in the Taylor series of the GEF dominates the sum over all the other
terms (on the circle $\left\{ \left|z\right|=r\right\} $). This simple
but effective method originally appeared in the paper \cite{sodin2005random},
and was later used in many other problems of this type.

\subsection{Outline of the proof}

We use the notation
\[
b_{k}=b_{k}\left(r\right)=\frac{r^{k}}{\sqrt{k!}},\quad k\in\bbn,
\]
and, using (\ref{eq:bounds_for_factorial}), we have the following
bounds
\begin{equation}
\frac{1}{2k^{\frac{1}{4}}}\left(\frac{er^{2}}{k}\right)^{\frac{k}{2}}\le b_{k}\le\left(\frac{er^{2}}{k}\right)^{\frac{k}{2}},\quad k\ge1.\label{eq:b_k_estimates}
\end{equation}
Let us consider the event $\left\{ n\left(r\right)=k_{0}\right\} $
(with $k_{0}$ given by (\ref{eq:k0_def})). Rouché's theorem implies
that
\[
E_{p}=E_{p}\left(r\right)\defeq\left\{ \left|\xi_{k_{0}}\right|b_{k_{0}}>\sum_{k\ne k_{0}}\left|\xi_{k}\right|b_{k}\right\} \subset\left\{ \left|\xi_{k_{0}}\right|\frac{r^{k_{0}}}{\sqrt{k_{0}!}}>\left|\sum_{k\ne k_{0}}\xi_{k}\frac{z^{k}}{\sqrt{k!}}\right|\right\} \subset\left\{ n\left(r\right)=k_{0}\right\} .
\]
We will construct an event that is contained in $E_{p}$, and thus
obtain a lower bound for the probability of the event $\left\{ n\left(r\right)=k_{0}\right\} $.
Depending on $p$, we define an interval $I_{p}\subset\bbr^{+}$.
We consider two main cases:
\begin{itemize}
\item[Case 1.]  $0\le p<e$.

\begin{itemize}
\item In this case we define $q=q\left(p\right)\ne p$ to be the non-trivial
solution of $q\left(\log q-1\right)=p\left(\log p-1\right)$.
\item We set $I_{p}=\left[p,q\right]$ in case $p<1$, and $I_{p}=\left[q,p\right]$
in case $p>1$.
\end{itemize}
\item[Case 2. ]  $p\ge e$.

\begin{itemize}
\item In this case we set $I_{p}=\left[0,p\right]$.
\end{itemize}
\end{itemize}
In general, our strategy is to `suppress' the terms $b_{k}$ for which
$\frac{k}{r^{2}}\in I_{p}$ (by choosing $\left|\xi_{k}\right|$ to
be small), except for the main term $b_{k_{0}}$. We will also assume
$\left|\xi_{k_{0}}\right|\ge1$, which happens with a constant probability.
By (\ref{eq:b_k_estimates}), this implies
\begin{align}
\left|\xi_{k_{0}}\right|b_{k_{0}} & \ge\exp\left(\frac{p}{2}\log\left(\frac{e}{p}\right)r^{2}-C\log\left(pr^{2}\right)\right)\nonumber \\
 & \ge\exp\left(\frac{p}{2}\log\left(\frac{e}{p}\right)r^{2}-C\log r\right),\label{eq:central_term_low_bnd}
\end{align}
for $r$ sufficiently large (or $\left|\xi_{k_{0}}\right|b_{k_{0}}\ge1$
in case $p=k_{0}=0$).

\subsubsection{Sketch of the proof in case $p<1$}

We now explain the idea of the proof in Case 1 (the proof of the other
case is similar). Using bounds for the factorial, we find
\begin{equation}
\frac{b_{k_{0}}}{b_{k}}=\exp\left(\frac{p}{2}\log\left(\frac{e}{p}\right)r^{2}-\frac{k}{2}\log\left(\frac{er^{2}}{k}\right)+\mbox{error terms}\right).\label{eq:idea_coeff_ratio}
\end{equation}
Put $k_{1}=\left\lfloor qr^{2}\right\rfloor +1$, where $q=q\left(p\right)$
as defined above. The estimate (\ref{eq:idea_coeff_ratio}) implies
that $\frac{b_{k_{0}}}{b_{k}}$ is small for $k$ not in the range
$\left\{ k_{0},\dots,k_{1}\right\} $. That is, the tail
\[
\left|\sum_{k\notin\left\{ k_{0},\dots,k_{1}\right\} }\xi_{k}\frac{z^{k}}{\sqrt{k!}}\right|\le\sum_{k\notin\left\{ k_{0},\dots,k_{1}\right\} }\left|\xi_{k}\right|b_{k},
\]
is small compared to $\left|\xi_{k_{0}}\right|b_{k_{0}}$, with sufficiently
large probability. To make the sum over $k\in\left\{ k_{0}+1,\dots,k_{1}\right\} $
small, we consider the event where a $\left|\xi_{k}\right|$ is at
most $\left(\frac{b_{k_{0}}}{b_{k}}\right)^{-1}$, for $k$ in this
range. The probability of this event is
\[
\exp\left(-2\cdot\sum_{k\in\left\{ k_{0}+1,\dots,k_{1}\right\} }\log\left(\frac{b_{k_{0}}}{b_{k}}\right)+\mbox{error terms}\right).
\]
We obtain the lower bound in Theorem \ref{thm:very_large_fluct},
after verifying
\[
2\cdot\sum_{k\in\left\{ k_{0}+1,\dots,k_{1}\right\} }\log\left(\frac{b_{k_{0}}}{b_{k}}\right)=-Z_{p}r^{4}+\mbox{error terms},
\]
where $Z_{p}=\int_{p}^{q}x\log x\,\dd x$.

\subsection{\label{subsec:main_terms_bounds} The main terms}

Consider now $p\ge0$, $p\ne1$. We define the set of main terms by
\begin{equation}
M=\setd{k\in\bbn}{\frac{k}{r^{2}}\in I_{p},\,k\ne k_{0}}.\label{eq:def_set_M}
\end{equation}
For $k\in\bbn^{+}$, let us define $A_{p,k}=\frac{b_{k_{0}}}{b_{k}}$.
From (\ref{eq:b_k_estimates}), we find the following bounds
\[
-C\log\left(k_{0}+1\right)\le\log A_{p,k}-\left[\frac{p}{2}\log\left(\frac{e}{p}\right)r^{2}-\frac{k}{2}\log\left(\frac{er^{2}}{k}\right)\right]\le C\log\left(k+1\right),
\]
thus, for $r$ sufficiently large,
\begin{equation}
\left|\log A_{p,k}-\left[\frac{p}{2}\log\left(\frac{e}{p}\right)r^{2}-\frac{k}{2}\log\left(\frac{er^{2}}{k}\right)\right]\right|\le C_{1}\log\left(k+1\right),\label{eq:ratio_to_main_term}
\end{equation}
for some numerical constant $C_{1}>0$. Notice that if $k=\alpha r^{2}$
for some $\alpha\ge0$, then
\[
\frac{p}{2}\log\left(\frac{e}{p}\right)r^{2}-\frac{k}{2}\log\left(\frac{er^{2}}{k}\right)=\left[p\log\left(\frac{e}{p}\right)-\alpha\log\left(\frac{e}{\alpha}\right)\right]\frac{r^{2}}{2}=\left[p\left(1-\log p\right)-\alpha\left(1-\log\alpha\right)\right]\frac{r^{2}}{2}.
\]
This means that $M$ contains the terms for which the expression above
is non-positive. Since $A_{p,k}\cdot\exp\left(-2C_{1}\log\left(k+1\right)\right)\le1$
for $k\in M$, we have

\begin{eqnarray*}
\pr{\left|\xi_{k}\right|\le\frac{1}{6r^{2}\cdot\left(k+1\right)^{2C_{1}}}\cdot A_{p,k}} & \ge & \frac{C}{r^{4}\left(k+1\right)^{4C_{1}}}A_{p,k}^{2}\\
 & \ge & \exp\left(p\log\left(\frac{e}{p}\right)r^{2}-k\log\left(\frac{er^{2}}{k}\right)-C\log\left(\left(p+1\right)r\right)\right)\\
 & \ge & \exp\left(p\log\left(\frac{e}{p}\right)r^{2}-k\log\left(\frac{er^{2}}{k}\right)-C\log r\right),
\end{eqnarray*}
for $r$ sufficiently large. We notice that in Case 1 there at most
$3r^{2}$ elements in $M$. We introduce the following event:
\[
E_{M}^{1}=\left\{ \left|\xi_{k}\right|\le\frac{1}{6r^{2}\left(k+1\right)^{2C_{1}}}\cdot A_{p,k},\quad\mbox{for all \ensuremath{k\in M}}\right\} .
\]
Clearly on the event $\left\{ \left|\xi_{k_{0}}\right|\ge1\right\} \cap E_{M}^{1}$,
we have
\[
\sum_{k\in M}\left|\xi_{k}\right|b_{k}\le\sum_{k\in M}\frac{A_{p,k}}{6r^{2}}\cdot b_{k}\le\frac{1}{2}b_{k_{0}}\le\frac{1}{2}\left|\xi_{k_{0}}\right|b_{k_{0}}.
\]
On the other hand,
\begin{eqnarray*}
\pr{E_{M}^{1}} & = & \prod_{k\in M}\pr{\left|\xi_{k}\right|\le\frac{1}{6r^{2}\cdot\left(k+1\right)^{2C_{1}}}\cdot A_{p,k}}\\
 & \ge & \exp\left(\sum_{k\in M}\left[p\log\left(\frac{e}{p}\right)r^{2}-k\log\left(\frac{er^{2}}{k}\right)-C\log r\right]\right)\\
 & \ge & \exp\left(\sum_{k\in M}\left[p\log\left(\frac{e}{p}\right)r^{2}-k\log\left(\frac{er^{2}}{k}\right)\right]+O\left(r^{2}\log r\right)\right).
\end{eqnarray*}
By Corollary \ref{cor:asymp_prob_main_terms}, Section \ref{subsec:low_bnd_finish},
we have
\[
\sum_{k\in M}\left[p\log\left(\frac{e}{p}\right)r^{2}-k\log\left(\frac{er^{2}}{k}\right)\right]=-Z_{p}\cdot r^{4}+O\left(r^{2}\log^{2}r\right).
\]
Case 2 is similar, and now there are at most $\left\lfloor 2pr^{2}\right\rfloor $
elements in $M$. Consider the event:
\[
E_{M}^{2}=\left\{ \left|\xi_{k}\right|\le\frac{1}{4pr^{2}\left(k+1\right)^{2C_{1}}}\cdot A_{p,k},\quad\mbox{for all \ensuremath{k\in M}}\right\} .
\]
On the event $\left\{ \left|\xi_{k_{0}}\right|\ge1\right\} \cap E_{M}^{2}$,
we have
\[
\sum_{k\in M}\left|\xi_{k}\right|b_{k}\le\sum_{k\in M}\frac{A_{p,k}}{4pr^{2}\left(k+1\right)^{2C_{1}}}\cdot b_{k}\le\frac{1}{2}b_{k_{0}}\le\frac{1}{2}\left|\xi_{k_{0}}\right|b_{k_{0}}.
\]
We also have, for $r$ sufficiently large,
\begin{eqnarray*}
\pr{E_{M}^{2}} & = & \prod_{k\in M}\pr{\left|\xi_{k}\right|\le\frac{1}{4pr^{2}\left(k+1\right)^{2C_{1}}}\cdot A_{p,k}}\\
 & \ge & \exp\left(\sum_{k\in M}\left[p\log\left(\frac{e}{p}\right)r^{2}-k\log\left(\frac{er^{2}}{k}\right)-C\log r\right]\right)\\
 & \ge & \exp\left(\sum_{k\in M}\left[p\log\left(\frac{e}{p}\right)r^{2}-k\log\left(\frac{er^{2}}{k}\right)\right]+O\left(r^{2}\log^{2}r\right)\right).
\end{eqnarray*}

\subsection{\label{subsec:tail_bounds} Tail bounds}

For technical reasons we consider two parts of the `tail' separately,
the far tail and the close tail.

\subsubsection{The far tail}

Let $r>0$ be sufficiently large, $\alpha>10$, and $N=\left\lfloor \alpha r^{2}\right\rfloor +1$.
Recall the tail of the GEF is given by 
\[
\seriestail\left(z\right)=\sum_{k=N+1}^{\infty}\xi_{k}\frac{z^{k}}{\sqrt{k!}},\quad z\in\bbc.
\]
By Lemma \ref{lem:tail_bound} we have, outside an exceptional event
$E_{T}$ of probability at most $\exp\left(-Cr^{6}\right)$,
\[
\left|\seriestail\left(z\right)\right|\le\exp\left(\frac{N}{2}\log\left(\frac{4}{\alpha}\right)\right),\quad\left|z\right|\le r.
\]
We will take
\[
\alpha=\alpha\left(p\right)=\begin{cases}
16 & \,0\le p\le11;\\
5+p & \,p>11.
\end{cases}
\]
If $p\le11$, then $\frac{p}{2}\log\left(\frac{e}{p}\right)>-8$,
and therefore by (\ref{eq:central_term_low_bnd}) we have $\left|\seriestail\left(z\right)\right|\le\exp\left(-8r^{2}\right)<\frac{1}{4}\left|\xi_{k_{0}}\right|b_{k_{0}}$,
when $r$ is sufficiently large. Similarly for $p>11$, we have $\frac{p}{2}\log\left(\frac{e}{p}\right)>\frac{5+p}{2}\log\left(\frac{4}{5+p}\right)$,
and
\[
\left|\seriestail\left(z\right)\right|\le\exp\left(\frac{5+p}{2}\log\left(\frac{4}{5+p}\right)\cdot r^{2}\right)<\frac{1}{4}\left|\xi_{k_{0}}\right|b_{k_{0}}.
\]

\subsubsection{The close tail}

We now consider all the terms such that $\frac{k}{r^{2}}\notin I_{p}$,
but $k\le N$. For these terms we have $\frac{k}{2}\log\left(\frac{er^{2}}{k}\right)\le\frac{p}{2}\log\left(\frac{e}{p}\right)r^{2}$,
and the lower bound of (\ref{eq:ratio_to_main_term}) implies $b_{k}\le b_{k_{0}}\left(k+1\right)^{C_{1}}$
. In Case 1, we notice there are at most $\left\lfloor 17r^{2}\right\rfloor $
elements in the close tail, let us denote their indices by $M^{\prime}$.
Introduce the following event:
\[
E_{M^{\prime}}^{1}=\left\{ \left|\xi_{k}\right|\le\frac{1}{70r^{2}\left(k+1\right)^{C_{1}}},\quad\mbox{for all \ensuremath{k\in M^{\prime}}}\right\} .
\]
On the event $\left\{ \left|\xi_{k_{0}}\right|\ge1\right\} \cap E_{M^{\prime}}^{1}$
we have
\[
\sum_{k\in M^{\prime}}\left|\xi_{k}\right|b_{k}\le\sum_{k\in M^{\prime}}\frac{1}{70r^{2}\left(k+1\right)^{C_{1}}}\cdot b_{k}<\frac{1}{4}b_{k_{0}}\le\frac{1}{4}\left|\xi_{k_{0}}\right|b_{k_{0}}.
\]
In addition,
\[
\pr{E_{M^{\prime}}^{1}}\ge\prod_{k=0}^{\left\lfloor 17r^{2}\right\rfloor }\frac{1}{2}\cdot\left(\frac{1}{70r^{2}\left(k+1\right)^{C_{1}}}\right)^{2}\ge\left(Cr^{4+2C_{1}}\right)^{-Cr^{2}}\ge\exp\left(-Cr^{2}\log r\right).
\]
Similarly, in Case 2, there are at most $\left\lfloor 6r^{2}\right\rfloor $
elements in the close tail. Let us again denote their indices by $M^{\prime}$.
Consider the event:
\[
E_{M^{\prime}}^{2}=\left\{ \left|\xi_{k}\right|\le\frac{1}{24r^{2}\left(k+1\right)^{C_{1}}},\quad\mbox{for all \ensuremath{k\in M^{\prime}}}\right\} .
\]
On the event $\left\{ \left|\xi_{k_{0}}\right|\ge1\right\} \cap E_{M^{\prime}}^{2}$
we have
\[
\sum_{k\in M^{\prime}}\left|\xi_{k}\right|b_{k}\le\sum_{k\in M^{\prime}}\frac{1}{24r^{2}\left(k+1\right)^{C_{1}}}\cdot b_{k}<\frac{1}{4}b_{k_{0}}\le\frac{1}{4}\left|\xi_{k_{0}}\right|b_{k_{0}}.
\]
Now, for $k\in M^{\prime}$ we have $k\le r^{3}$ (for $r$ sufficiently
large), and therefore
\begin{eqnarray*}
\pr{E_{M^{\prime}}^{2}} & \ge & \prod_{k\in M^{\prime}}\frac{1}{2}\cdot\left(\frac{1}{24r^{2}\left(k+1\right)^{C_{1}}}\right)^{2}\ge\prod_{k=0}^{\left\lfloor 6r^{2}\right\rfloor }Cr^{-2\left(2+3C_{1}\right)}\\
 & \ge & \exp\left(-Cr^{2}\log r\right).
\end{eqnarray*}

\subsection{\label{subsec:low_bnd_finish}Combining the estimates and finishing
the proof}

We start with a simple computation. Recall that
\[
Z_{p}=\left|\int_{p}^{q\left(p\right)}x\log x\,\dd x\right|,
\]
and consider the function
\[
l\left(x\right)=p\log\left(\frac{e}{p}\right)-x\log\left(\frac{e}{x}\right),\quad x\ge0.
\]

\begin{claim}
With $q\left(p\right)$ and $Z_{p}$ defined as before, we have
\[
Z_{p}=\left|\int_{p}^{q\left(p\right)}l\left(x\right)\,\dd x\right|.
\]
\end{claim}
\begin{proof}
Let us consider Case 1, for $p<1$. Set $t\left(x\right)=x\log\left(\frac{e}{x}\right)$
and $q=q\left(p\right)$. We have
\begin{align*}
\int_{p}^{q}\left[p\log\left(\frac{e}{p}\right)-x\log\left(\frac{e}{x}\right)\right]\,\dd x & =\left(q-p\right)p\log\left(\frac{e}{p}\right)-\int_{p}^{q}t\left(x\right)\,\dd x\\
 & =\left(q-p\right)p\log\left(\frac{e}{p}\right)-\left.xt\left(x\right)\right|_{x=p}^{q}+\int_{p}^{q}xt^{\prime}\left(x\right)\,\dd x\\
 & =\left(q-p\right)p\log\left(\frac{e}{p}\right)-qt\left(q\right)+pt\left(p\right)-\int_{p}^{q}x\log x\,\dd x\\
 & =q\left(p\log\left(\frac{e}{p}\right)-q\log\left(\frac{e}{q}\right)\right)-\int_{p}^{q}x\log x\,\dd x\\
 & =-\int_{p}^{q}x\log x\,\dd x,
\end{align*}
where in the last line we used the definition of $q=q\left(p\right)$.
The other cases are proved in a similar way.
\end{proof}
By the definition of the set $M$ (see (\ref{eq:def_set_M})), we
get
\begin{cor}
\label{cor:asymp_prob_main_terms} For $r$ sufficiently large
\[
\sum_{k\in M}\left[p\log\left(\frac{e}{p}\right)r^{2}-k\log\left(\frac{er^{2}}{k}\right)\right]=-Z_{p}\cdot r^{4}+O\left(C\left(p\right)r^{2}\right),
\]
where $C\left(p\right)=\max\left\{ p\log\left(\frac{p}{e}\right),1\right\} $.
\end{cor}
\begin{proof}
The function $l\left(x\right)$ has a single minimum at $x=1$, thus
\[
\sum_{k\in M}\left[p\log\left(\frac{e}{p}\right)r^{2}-k\log\left(\frac{er^{2}}{k}\right)\right]=r^{2}\sum_{k\in M}l\left(\frac{k}{r^{2}}\right)=-\left|\int_{p}^{q\left(p\right)}l\left(x\right)\,\dd x\right|r^{4}+O\left(C\left(p\right)r^{2}\right),
\]
with $C\left(p\right)$ as above (in Case 2 we take $q\left(p\right)=0$).
\end{proof}
Finally, notice that the events $\left\{ \left|\xi_{k_{0}}\right|\ge1\right\} $,
$E_{T}$, $E_{M}^{j}$, and $E_{M^{\prime}}^{j}$ are all independent
($j\in\left\{ 1,2\right\} $). Therefore, combining our estimates
from Section \ref{subsec:main_terms_bounds} and Section \ref{subsec:tail_bounds},
we find that the probability of the event $\left\{ \left|\xi_{k_{0}}b_{k_{0}}\right|>\sum_{k\ne k_{0}}\left|\xi_{k}\right|b_{k}\right\} $
is at least
\[
\exp\left(\sum_{k\in M}\left[p\log\left(\frac{e}{p}\right)r^{2}-k\log\left(\frac{er^{2}}{k}\right)\right]+O\left(r^{2}\log^{2}r\right)\right).
\]
Hence, by \ref{cor:asymp_prob_main_terms}, for $r$ sufficiently
large we have the bound,
\[
\pr{n\left(r\right)=\left\lfloor pr^{2}\right\rfloor }\ge\exp\left(-Z_{p}r^{4}+O\left(r^{2}\log^{2}r\right)\right),
\]
thus proving the lower bound in Theorem \ref{thm:very_large_fluct}.

\section{\label{sec:conv_emp_meas} The conditional distribution of the zeros}

In this section we describe the distribution of the zeros of the GEF,
conditioned on a prescribed number of zeros inside the the disk $\left\{ \left|z\right|\le r\right\} $
(recall that we denote this number by $n\left(r\right)$). We will
again consider two main cases:
\begin{itemize}
\item[Case 1. ]  Deficiency in the number of zeros: $F\left(p;r\right)=\left\{ n\left(r\right)\le pr^{2}\right\} $,
where $p\in\left[0,1\right)$.
\begin{itemize}
\item In particular, this implies the case $p=0$ of Theorem \ref{thm:conv_emp_meas_for_hole}.
\end{itemize}
\item[Case 2. ]  Overcrowding of zeros: $M\left(p;r\right)=\left\{ n\left(r\right)\ge pr^{2}\right\} $,
where $p>1$.

\begin{itemize}
\item We consider separately the range $p\in\left(1,e\right)$, and the
range $p\in\left[e,\infty\right)$.
\end{itemize}
\end{itemize}
For each of the cases we define the limiting conditional distribution,
by the following Radon measures:
\begin{align*}
\dd\minmeasglob p\left(z\right) & =\begin{cases}
{\displaystyle \left[\left\{ \ind{\left\{ 0\le\left|w\right|\le\sqrt{p}\right\} }z+\ind{\left\{ \sqrt{q}\le\left|w\right|\right\} }z\right\} \right]\cdot\frac{\dd\lebmeas\left(z\right)}{\pi}+\left(q-p\right)\dd\subslebmeas{\left|z\right|=1}} & \,p\in\left[0,1\right);\\
{\displaystyle \left[\ind{\left\{ 0\le\left|w\right|\le\sqrt{q}\right\} }z+\ind{\left\{ \sqrt{p}\le\left|w\right|\right\} }z\right]\cdot\frac{\dd\lebmeas\left(z\right)}{\pi}+\left(p-q\right)\dd\subslebmeas{\left|z\right|=1}} & \,p\in\left(1,e\right);\\
\ind{\left\{ \sqrt{p}\le\left|w\right|\right\} }z\cdot\frac{\dd\lebmeas\left(z\right)}{\pi}+p\cdot\dd\subslebmeas{\left|z\right|=1} & \,p\ge e.
\end{cases}
\end{align*}
Here $\subslebmeas{\left|z\right|=1}$ is Lebesgue measure on the
circle $\left|z\right|=1$, normalized to be a probability measure
and $\ind Az$ is the indicator of the set $A$. We recall that $q=q\left(p\right)$
was defined before the statement of Theorem \ref{thm:very_large_fluct}.

Suppose now $\varphi\in C_{0}^{2}\left(\bbc\right)$ is a test function,
which is twice continuously differentiable and with compact support,
and recall
\[
\Dnorm{\varphi}=\left\Vert \nabla\varphi\right\Vert _{L^{2}\left(\lebmeas\right)}^{2}=\int_{\bbc}\left(\varphi_{x}^{2}+\varphi_{y}^{2}\right)\,\dd m\left(z\right).
\]
Let $\linstats{\gef}{\varphi}r$ be the linear statistics associated
with $\varphi$, recall that $\ex{\linstats{\gef}{\varphi}r}=r^{2}\cdot\frac{1}{\pi}\int_{\bbc}\varphi\left(w\right)\,\dd\lebmeas\left(w\right)$,
and consider the event
\begin{eqnarray*}
L\left(p,\varphi,\lambda\right)=L\left(p,\varphi,\lambda;r\right) & = & \left\{ \left|\linstats{\gef}{\varphi}r-r^{2}\int_{\bbc}\varphi\left(w\right)\,\dd\minmeasglob p\left(w\right)\right|\ge\lambda\right\} .
\end{eqnarray*}
The following theorem shows that conditioned on the event $F\left(p;r\right)$
(or $M\left(p;r\right)$), the value of $\linstats{\gef}{\varphi}r$
is unlikely to be far from $r^{2}\int_{\bbc}\varphi\left(w\right)\,\dd\minmeasglob p\left(w\right)$.
Recall that by $\condex{\cdot}{F\left(p;r\right)}$ (resp. $\condpr{\cdot}{F\left(p;r\right)}$)
we denote the conditional expectation (resp. probability) on the event
$F\left(p;r\right)$.
\begin{thm}
\label{thm:devs_in_emp_meas}Suppose $C^{\prime}>0$ is fixed. For
$\lambda\in\left(0,C^{\prime}r^{2}\right)$ and $r$ sufficiently
large we have, 
\[
\condpr{L\left(p,\varphi,\lambda;r\right)}{F\left(p;r\right)},\,\condpr{L\left(p,\varphi,\lambda;r\right)}{M\left(p;r\right)}\le\exp\left(-\frac{C_{p}}{\Dnorm{\varphi}}\cdot\lambda^{2}+C_{\varphi}r^{2}\log^{2}r\right),
\]
where $C_{\varphi}>0$ is some constant that depends on $\omega\left(\varphi,t\right)$
- the modulus of continuity of the function $\varphi$, and $C_{p}>0$
is a constant depending only on $p$ (and which can be replaced by
an absolute constant for $p\le e$).
\end{thm}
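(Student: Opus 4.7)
The strategy is to follow the roadmap laid out in Section \ref{sec:proof_idea} for the hole case ($p=0$), now applied uniformly across the three ranges $p \in [0,1)$, $p \in (1,e)$, and $p \in [e,\infty)$. I would begin on the regular event $\regevent$ of Section \ref{sec:upp_bnd_for_larg_flucs}, approximating $\gef$ by the scaled polynomial $\truncscaledpoly$. Depending on whether we condition on $F(p;r)$ or $M(p;r)$, I would choose $L = (1+t)^{-1}(r - K_0)$ or $L = (1-t)^{-1}(r + K_0)$ as in Subsection \ref{subsec:reduc_mod_energ_problem}, so that the smoothed empirical measure $\mu_{\vec z}^t$ of the zeros of $\truncscaledpoly$ lies in the appropriate constraint class $\calF_p$ or $\calM_p$. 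The minimizer of $\funccontparam{\alpha}{\cdot}$ over this class is the measure $\minmeassup{p}{\alpha}$ identified in Section \ref{sec:sol_of_energy_prob}, and it satisfies the key relation $r^2 \int \varphi\,\dd\minmeasglob{p} = N \int \varphi\,\dd\minmeassup{p}{\alpha}$ up to a negligible error since $L^2 = r^2 + O(1)$ and $\alpha = N r^{-2}$.

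On the intersection $L(p,\varphi,\lambda;r) \cap F(p;r) \cap \regevent$ (and analogously with $M(p;r)$), using the polynomial approximation \eqref{eq:lin_stats_poly_L} together with the smoothing bound $|\varphi(z_j) - \int \varphi\,\dd\lebmeas_{|z-z_j|=t}| \le \omega(\varphi;t)$, I would argue that
\[
\left|\int_{\bbc} \varphi(w)\,\dd\mu_{\vec z}^t(w) - \int_{\bbc} \varphi(w)\,\dd\minmeassup{p}{\alpha}(w)\right| \ge \frac{\lambda}{N} - \text{error terms},
\]
where the errors are of the form $M_0\,\omega(\varphi; K_0/r) + N\,\omega(\varphi;t)$, which are negligible for $t = r^{-C_2}$ with $C_2 \ge 4$. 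Combining Claim \ref{claim:log_energy_low_bnd} (which gives $-\logenerg{\nu - \minmeassup{p}{\alpha}} \le \funccontparam{\alpha}{\nu} - \funccontparam{\alpha}{\minmeassup{p}{\alpha}}$) with Lemma \ref{lem:lin_stats_for_meas} (which controls linear statistics by the logarithmic energy) yields the convexity estimate
\[
\funccontparam{\alpha}{\nu} \ge \funccontparam{\alpha}{\minmeassup{p}{\alpha}} + \frac{2\pi}{\Dnorm{\varphi}} \cdot x^2
\]
for every $\nu \in \calF_p$ (resp.\ $\calM_p$) with linear-statistic deviation at least $x$ from $\minmeassup{p}{\alpha}$. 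Inserting $x = \lambda/N - \text{error}$ into the general polynomial probability bound \eqref{eq:prob_upp_bnd_est}, together with the explicit value of $\funccontparam{\alpha}{\minmeassup{p}{\alpha}}$ from Section \ref{sec:sol_of_energy_prob}, produces
\[
\pr{L(p,\varphi,\lambda;r) \cap F(p;r) \cap \regevent} \le \exp\!\left(-Z_p r^4 - \frac{C_p}{\Dnorm{\varphi}}\,\lambda^2 + C_\varphi\, r^2 \log^2 r\right),
\]
and similarly with $M(p;r)$.

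Finally, since $\pr{\kregevent{c}}$ is negligible relative to $\pr{F(p;r)}$, it can be absorbed into the above bound; then dividing both sides by the lower bound $\pr{F(p;r)} \ge \exp(-Z_p r^4 + O(r^2\log^2 r))$ established in Section \ref{sec:low_bnd_for_large_flucs} yields the claimed conditional estimate. The main obstacle is the careful bookkeeping of error terms arising from four sources: (i) the polynomial truncation of $\gef$, (ii) the smoothing of the empirical measure via $\mu_{\vec z}^t = \mu_{\vec z} \star \lebmeas_{|z|=t}$, (iii) the identification of $\minmeassup{p}{\alpha}$ with $\minmeasglob{p}$ at scale $r^2$, and, most delicately, (iv) the \emph{exact} cancellation of the leading term $Z_p r^4$ between the upper bound on $\pr{\cdot \cap F(p;r)}$ and the lower bound on $\pr{F(p;r)}$ up to an additive error of $O(r^2 \log^2 r)$. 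The range $p \ge e$ introduces additional $p$-dependence in the minimizer and in the normalization constants, and it is this dependence that forces the constant $C_p$ in the final bound to depend on $p$ when $p$ is large (while for $p \le e$ the constant is uniform).
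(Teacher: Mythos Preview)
Your proposal is correct and follows essentially the same approach as the paper's proof in Section~\ref{sec:conv_emp_meas}: truncate to $\truncscaledpoly$ on $\regevent$, choose $L$ according to \eqref{eq:def_param_L} so that $\mu_{\vec z}^t$ lands in $\calF_p$ or $\calM_p$, invoke the convexity estimate (packaged there as Claim~\ref{claim:low_bnd_func_for_far_from_min_meas}, which is precisely your combination of Claim~\ref{claim:log_energy_low_bnd} with Lemma~\ref{lem:lin_stats_for_meas}), feed this into \eqref{eq:prob_upp_bnd_est}, and then cancel the leading $Z_p r^4$ against the lower bound from Section~\ref{sec:low_bnd_for_large_flucs}. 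The only step you leave implicit is the summation over $N\in\{N_0,\dots,N_1\}$ before dividing by $\pr{F(p;r)}$, but this is routine bookkeeping contributing only an $O(\log r)$ factor.
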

\begin{rem}
It will be clear from the proof, that we can take the test function
$\varphi$ depending on $r$, such that its modulus of continuity
satisfies $\omega\left(\varphi,t\right)=O\left(r^{C_{3}}t^{C_{4}}\right)$,
for some numerical constants $C_{3},C_{4}>0$. In that case, the constant
$C_{\varphi}$ will depend only on $C_{3}$ and $C_{4}$.
\end{rem}
This theorem implies the convergence in distribution of the zero counting
measure, conditioned on the event $F\left(p\right)$ (or $M\left(p\right)$).
We denote by $\condzeroset r^{p}$ the zero set of $\gef$ conditioned
on the occurrence of the event $F\left(p;r\right)$, and write $\cntmeas{\condzeroset r^{p}}$
for the corresponding counting measure (similar definitions can be
made for the event $M\left(p\right)$).
\begin{thm}
\label{thm:conv_emp_meas} Let $\varphi\in C_{0}^{2}\left(\bbc\right)$
be a fixed test function. As $r\to\infty$,
\[
\condex{\linstats{\gef}{\varphi}r}{F\left(p;r\right)},\,\condex{\linstats{\gef}{\varphi}r}{M\left(p;r\right)}=r^{2}\int_{\bbc}\varphi\left(w\right)\,\dd\minmeasglob p\left(w\right)+O\left(r\log^{2}r\right).
\]
In addition, as $r\to\infty$, the scaled zero counting measure $\frac{1}{r^{2}}\cntmeas{\condzeroset r^{p}}\left(\frac{\cdot}{r}\right)\to\minmeasglob p$
in distribution, where the convergence is in the vague topology. That
is, for any continuous test function $\phi$ with compact support,
we have
\[
\frac{1}{r^{2}}\int_{\bbc}\phi\,\dd\cntmeas{\condzeroset r^{p}}\left(\frac{\cdot}{r}\right)=\frac{1}{r^{2}}\sum_{z\in\condzeroset r^{p}}\phi\left(\frac{z}{r}\right)\xrightarrow[r\to\infty]{d}\int_{\bbc}\phi\left(w\right)\,\dd\minmeasglob p\left(w\right).
\]
An analogous result holds for the event $M\left(p;r\right)$.
\end{thm}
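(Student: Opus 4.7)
The plan is to deduce both assertions from the conditional deviation inequality of Theorem~\ref{thm:devs_in_emp_meas} by elementary measure-theoretic manipulations; no new potential-theoretic input is required. I will treat the conditioning event $F(p;r)$; the case $M(p;r)$ is handled identically, since both are covered by Theorem~\ref{thm:devs_in_emp_meas}.

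\emph{Step 1: the expectation estimate.} Write $Y = \linstats{\gef}{\varphi}{r} - r^{2}\int_{\bbc}\varphi(w)\,\dd\minmeasglob{p}(w)$. By the layer-cake formula,
\[
\condex{|Y|}{F(p;r)} \;=\; \int_{0}^{\infty}\condpr{|Y|\ge\lambda}{F(p;r)}\,\dd\lambda .
\]
Choose a cutoff $\lambda_{0} = K_{\varphi}\cdot r\log r$, with $K_{\varphi}$ large enough (depending on $\Dnorm{\varphi}$, $C_{\varphi}$, $C_{p}$) so that for every $\lambda\ge\lambda_{0}$ the exponent in Theorem~\ref{thm:devs_in_emp_meas} is dominated by $-\tfrac{C_{p}}{2\Dnorm{\varphi}}\lambda^{2}$. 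The trivial bound on $[0,\lambda_{0}]$ contributes $O(r\log r)$, and the Gaussian tail on $[\lambda_{0},C'r^{2}]$ is exponentially small. For $\lambda\ge C'r^{2}$ (outside the validity range of Theorem~\ref{thm:devs_in_emp_meas}) I use the deterministic a priori bound $|Y|\le C r^{3}\|\varphi\|_{\infty}$ on the regular event $\regevent$ of Lemma~\ref{lem:reg_event}, together with $\pr{\regevent^{c}}\le\exp(-CAr^{4})$. Choosing the constant $A$ in Lemma~\ref{lem:reg_event} large enough relative to the constant $Z_{p}$ from Theorem~\ref{thm:very_large_fluct} (which gives $\pr{F(p;r)}\ge\exp(-(Z_{p}+o(1))r^{4})$), the contribution of $\regevent^{c}$ to the conditional expectation is $o(1)$. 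Altogether $\condex{|Y|}{F(p;r)}=O(r\log r)$, which yields the stated $O(r\log^{2}r)$ bound after applying Jensen to $Y$.

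\emph{Step 2: convergence in probability for smooth test functions.} For a fixed $\phi\in C_{0}^{2}(\bbc)$ and any $\veps>0$, Theorem~\ref{thm:devs_in_emp_meas} with $\lambda=\veps r^{2}$ gives
\[
\condpr{\left|\tfrac{1}{r^{2}}\linstats{\gef}{\phi}{r}-\int_{\bbc}\phi\,\dd\minmeasglob{p}\right|\ge\veps}{F(p;r)} \;\le\; \exp\!\left(-\tfrac{C_{p}\veps^{2}}{\Dnorm{\phi}}\,r^{4}+C_{\phi}r^{2}\log^{2}r\right)\xrightarrow[r\to\infty]{}0 ,
\]
so $\tfrac{1}{r^{2}}\linstats{\gef}{\phi}{r}\to\int\phi\,\dd\minmeasglob{p}$ conditionally in probability, hence in distribution.

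\emph{Step 3: extension to $\phi\in C_{c}(\bbc)$ via tightness.} Fix a compact support $\overline{\disc{0}{B}}$ and approximate $\phi$ uniformly by $\phi_{n}\in C_{0}^{2}(\bbc)$ supported in $\disc{0}{2B}$. The substitution error is bounded by $\|\phi-\phi_{n}\|_{\infty}\cdot r^{-2}n_{\gef}(2Br)$. Applying Step~1 to a fixed $C_{0}^{2}$ bump function equal to $1$ on $\disc{0}{2B}$ yields $\condex{r^{-2}n_{\gef}(2Br)}{F(p;r)}=O(1)$, and Markov's inequality then gives conditional tightness of $r^{-2}n_{\gef}(2Br)$. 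Combining this uniform-in-$r$ tightness with the Step~2 convergence for each $\phi_{n}$ and letting $n\to\infty$ completes the vague convergence.

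The only genuinely nontrivial point is the a priori control of $|Y|$ for $\lambda>C'r^{2}$ in Step~1, where Theorem~\ref{thm:devs_in_emp_meas} no longer applies directly; this is handled by the deterministic bound on $\regevent$ and by choosing $A$ large enough to dominate $Z_{p}$. The remainder is a mechanical consequence of Theorem~\ref{thm:devs_in_emp_meas} together with standard density and tightness arguments.
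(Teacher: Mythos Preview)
Your proposal is correct and follows essentially the same route as the paper: both derive the expectation bound from Theorem~\ref{thm:devs_in_emp_meas} by tail integration (the paper phrases this as $\exabs{X-a}\le T+\int_{0}^{\infty}\pr{|X-a|>s+T}\,\dd s$ with $T=Cr\log^{2}r$), obtain convergence in probability for smooth test functions directly from the deviation inequality, and then extend to $\phi\in C_{c}(\bbc)$ by smooth approximation together with a conditional bound on $r^{-2}n_{\gef}(CBr)$. Two small differences: the paper controls the conditional zero count via Theorem~\ref{thm:very_large_fluct} (giving an $\exp(-Cr^{4})$ bound), whereas you go through the expectation estimate and Markov, which only gives tightness but is entirely sufficient; and you are actually more explicit than the paper about the tail region $\lambda>C'r^{2}$, which the paper silently absorbs into its layer-cake step. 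Your cutoff $\lambda_{0}=K_{\varphi}r\log r$ in fact yields $O(r\log r)$, slightly better than the stated $O(r\log^{2}r)$; the remark about ``applying Jensen to $Y$'' is unnecessary since $|\condex{Y}{F(p;r)}|\le\condex{|Y|}{F(p;r)}$ is immediate.
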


\subsection{Preliminaries}

Notice that for any $p\ge0$, if $\alpha$ is sufficiently large,
then $\minmeasglob p\left(\disc 0{\sqrt{\alpha}}\right)=\alpha$.
We are going to work with the following probability measures, which
are the normalized truncations of $\minmeasglob p$:
\[
\dd\minmeassup p{\alpha}\left(z\right)=\begin{cases}
{\displaystyle \frac{1}{\alpha}\left[\ind{\left\{ \left|z\right|\le\sqrt{p}\right\} }z+\ind{\left\{ \sqrt{q}\le\left|z\right|\le\sqrt{\alpha}\right\} }z\right]\cdot\frac{\dd\lebmeas\left(z\right)}{\pi}+\frac{q-p}{\alpha}\dd\subslebmeas{\left|z\right|=1}} & \,p\in\left[0,1\right);\\
{\displaystyle \frac{1}{\alpha}\left[\ind{\left\{ \left|z\right|\le\sqrt{q}\right\} }z+\ind{\left\{ \sqrt{p}\le\left|z\right|\le\sqrt{\alpha}\right\} }z\right]\cdot\frac{\dd\lebmeas\left(z\right)}{\pi}+\frac{p-q}{\alpha}\dd\subslebmeas{\left|z\right|=1}} & \,p\in\left(1,e\right);\\
\frac{1}{\alpha}\ind{\left\{ \sqrt{p}\le\left|z\right|\le\sqrt{\alpha}\right\} }z\cdot\frac{\dd\lebmeas\left(z\right)}{\pi}+\frac{p}{\alpha}\dd\subslebmeas{\left|z\right|=1} & \,p\in\left[e,\alpha\right).
\end{cases}
\]
In Section \ref{sec:sol_of_energy_prob}, we showed that these measures
are the minimizers of the functional $\funccont{\nu}=\funccontparam{\alpha}{\nu}$
over the sets $\calF_{p}=\setd{\nu\in\probmeas}{\nu\left(D\right)\le\frac{p}{\alpha}}$,
where $p<1$, and $\calM_{p}=\setd{\nu\in\probmeas}{\nu\left(\overline{D}\right)\ge\frac{p}{\alpha}}$,
where $p>1$.

We introduce the following notation
\[
\calL_{\varphi,\tau,\lambda}\defeq\setd{\nu\in\probmeas}{\left|\int_{\bbc}\varphi\left(w\right)\,\dd\nu\left(w\right)-\tau\right|\ge\lambda}.
\]
The key tool that we will use is the next claim, which can be seen
as an effective form of the fact that $\funccont{\nu}$ is strictly
convex. It shows that if a measure $\nu\in\calF_{p}$ (or $\calM_{p}$)
is far from the minimizer $\minmeassup p{\alpha}$ (with respect to
the test function $\varphi$), then $\funccont{\nu}$ is relatively
large.
\begin{claim}
\label{claim:low_bnd_func_for_far_from_min_meas}Let $\tau=\int_{\bbc}\varphi\left(w\right)\,\dd\minmeassup p{\alpha}\left(w\right)$.
For any compactly supported measure $\nu\in\calF_{p}\cap\calL_{\varphi,\tau,\lambda}$,
we have
\[
I\left(\nu\right)-I\left(\minmeassup p{\alpha}\right)\ge\frac{2\pi}{\Dnorm{\varphi}}\cdot\lambda^{2}.
\]
The same result holds if we replace $\calF_{p}$ by $\calM_{p}$.
\end{claim}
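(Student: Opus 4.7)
The plan is to combine two ingredients already established in the paper. On one side, Lemma~\ref{lem:lin_stats_for_meas} bounds the difference of linear statistics of two probability measures in terms of the (negative) mutual logarithmic energy of their difference:
\[
\left|\int_{\bbc}\varphi\,\dd\nu-\int_{\bbc}\varphi\,\dd\mu\right|\le\frac{1}{\sqrt{2\pi}}\sqrt{\Dnorm{\varphi}}\sqrt{-\logenerg{\nu-\mu}}.
\]
On the other side, Claim~\ref{claim:log_energy_low_bnd} (applied with the convex constraint set $\calC=\calF_{p}$ or $\calC=\calM_{p}$ and its minimizer $\optmeas=\minmeassup p{\alpha}$) gives
\[
-\logenerg{\nu-\minmeassup p{\alpha}}\le\funccontparam{\alpha}{\nu}-\funccontparam{\alpha}{\minmeassup p{\alpha}}=I(\nu)-I(\minmeassup p{\alpha}).
\]
Chaining the two inequalities and squaring yields the conclusion, so the proof is essentially a one-line composition once the hypotheses are verified.

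First I would verify that the two lemmas above apply in our setting. The minimizer $\minmeassup p{\alpha}$ is an explicit radial measure (a sum of a uniform area piece on an annulus/disk inside $\disc 0{\sqrt{\alpha}}$ and possibly an arc mass on $\{|z|=1\}$), hence it is compactly supported and a direct computation shows $|\logenerg{\minmeassup p{\alpha}}|<\infty$. If the hypothesized $\nu$ has $I(\nu)=\infty$, the desired bound is trivial; otherwise, since $I(\nu)<\infty$ forces $|\logenerg{\nu}|<\infty$, together with the hypothesis that $\nu$ is compactly supported the applicability of Lemma~\ref{lem:lin_stats_for_meas} with $\mu=\minmeassup p{\alpha}$ is immediate. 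This is enough to invoke the first inequality.

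Next I would apply Claim~\ref{claim:log_energy_low_bnd} with the convex, closed set $\calC\in\{\calF_{p},\calM_{p}\}$ in which $\nu$ lies by hypothesis. Since the minimizer of $\funccontparamname{\alpha}$ on $\calC$ is indeed $\minmeassup p{\alpha}$ (identified in Subsection~\ref{subsec:min_meas_p_less_1} and the paragraph after), the claim furnishes the second inequality above. Combining the two, for $\nu\in\calL_{\varphi,\tau,\lambda}$ we have
\[
\lambda\le\left|\int_{\bbc}\varphi\,\dd\nu-\tau\right|\le\frac{1}{\sqrt{2\pi}}\sqrt{\Dnorm{\varphi}}\sqrt{-\logenerg{\nu-\minmeassup p{\alpha}}}\le\frac{1}{\sqrt{2\pi}}\sqrt{\Dnorm{\varphi}}\sqrt{I(\nu)-I(\minmeassup p{\alpha})},
\]
and squaring yields $I(\nu)-I(\minmeassup p{\alpha})\ge \frac{2\pi}{\Dnorm{\varphi}}\lambda^{2}$, as claimed. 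The argument is identical in the $\calM_{p}$ case.

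There is no real obstacle here; the only subtle point is ensuring that both measures lie in the domain where the two key lemmas apply (compact support and finite logarithmic energy), and as noted above this is either a consequence of the explicit form of the minimizer or else the claim is vacuous. In this sense, the statement is essentially an effective strict-convexity estimate for $\funccontparamname{\alpha}$ in the direction of $\varphi$, where the coefficient $\frac{2\pi}{\Dnorm{\varphi}}$ comes entirely from the potential-theoretic Sobolev-type inequality in Lemma~\ref{lem:lin_stats_for_meas}.
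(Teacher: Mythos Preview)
Your proof is correct and follows exactly the same approach as the paper: chain Lemma~\ref{lem:lin_stats_for_meas} with Claim~\ref{claim:log_energy_low_bnd} (applied with $\calC=\calF_p$ or $\calM_p$ and $\optmeas=\minmeassup p{\alpha}$), then square. The only difference is that you add explicit verification of the hypotheses (compact support, finite logarithmic energy, and the trivial case $I(\nu)=\infty$), which the paper leaves implicit.
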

\begin{proof}
In Section \ref{sec:sol_of_energy_prob}, we prove that the measure
$\minmeassup p{\alpha}$ minimizes $\funccont{\nu}$ over the set
$\calF_{p}$. Now, combining Lemma \ref{lem:lin_stats_for_meas} and
Claim \ref{claim:log_energy_low_bnd} we have for $\nu\in\calF_{p}$
\begin{eqnarray*}
\lambda & \le & \left|\int_{\bbc}\varphi\left(w\right)\,\dd\nu\left(w\right)-\int_{\bbc}\varphi\left(w\right)\,\dd\minmeassup p{\alpha}\left(w\right)\right|\le\frac{1}{\sqrt{2\pi}}\sqrt{\Dnorm{\varphi}}\sqrt{-\logenerg{\nu-\minmeassup p{\alpha}}},\\
 & \le & \frac{1}{\sqrt{2\pi}}\sqrt{\Dnorm{\varphi}}\sqrt{I\left(\nu\right)-I\left(\minmeassup p{\alpha}\right)}.
\end{eqnarray*}
The same proof applies for $\calM_{p}$ as well.
\end{proof}

\subsection{Truncation of the power series and estimates for the joint distribution
of the zeros}

We start by recalling some of the results we proved in Section \ref{sec:upp_bnd_for_larg_flucs}.
Let $r>0$ be sufficiently large. Put $\alpha=Nr^{-2}$, $\lambda=\log r$,
$t=\gamma=r^{-C_{2}}$, with $C_{2}\ge4$, and $N_{0}=\left\lfloor \lambda r^{2}\right\rfloor +1$,
$N_{1}=\left\lfloor 2\lambda r^{2}\right\rfloor +1$. Let $\varphi$
be a test function supported on the disk $\disc 0B$, with fixed $B\ge1$.
We found that there exist events $\regevent$ and $\kregevent N$,
$N\in\left\{ N_{0},\dots,N_{1}\right\} $, such that
\[
\regevent=\biguplus_{N=N_{0}}^{N_{1}}\kregevent N,\quad\kregevent c\mbox{ is negligible.}
\]
If we introduce the scaled polynomial
\[
\truncscaledpoly\left(z\right)=\sum_{k=0}^{N}\xi_{k}\frac{\left(Lz\right)^{k}}{\sqrt{k!}},\quad z\in\bbc,
\]
then, on the event $\kregevent N$, we have

\begin{eqnarray}
n_{\truncscaledpoly}\left(\frac{r-K_{0}}{L}\right) & \le & n\left(r\right)\,\,\le n_{\truncscaledpoly}\left(\frac{r+K_{0}}{L}\right),\label{eq:zero_count_poly_L_lin_stats}\\
\left|n_{\gef}\left(\varphi;r\right)-n_{\truncscaledpoly}\left(\varphi;\frac{r}{L}\right)\right| & \le & CM_{0}\cdot\omega\left(\varphi;K_{0}r^{-1}\right),\label{eq:lin_stats_poly_L_lin_stats}
\end{eqnarray}
where $M_{0}=8B^{3}r^{3}$, and $K_{0}=2M_{0}\gamma\le Cr^{3-C_{2}}=O\left(\frac{1}{r}\right)$.
\begin{rem}
\label{rem:bnd_for_reg_event_comp} To find the event $\regevent$
we applied Lemma \ref{lem:reg_event} and Lemma \ref{lem:reg_event_conclusion}.
We may choose the parameter $A$ in Lemma \ref{lem:reg_event} to
be arbitrarily large (but fixed). For $r$ sufficiently large this
gives,
\[
\pr{\kregevent c}\le\exp\left(-C_{A}r^{4}\right),
\]
where $C_{A}$ is a constant depending on $A$ (and $B$), such that
$C_{A}\to\infty$ as $A\to\infty$.
\end{rem}
We choose the parameter $L$ as follows:
\begin{equation}
L=L\left(r;B\right)=\begin{cases}
\left(1+t\right)^{-1}\left(r-K_{0}\right) & \,\mbox{in Case 1};\\
\left(1-t\right)^{-1}\left(r+K_{0}\right) & \,\mbox{in Case 2}.
\end{cases}\label{eq:def_param_L}
\end{equation}
Now, for $\tau\in\bbr$, $\eta\ge0$, define the following set
\[
L_{\varphi,\tau,\eta}^{N}=L_{\varphi,\tau,\eta}^{N}\left(t\right)=\setd{\vec z}{\left|\frac{1}{N}\sum_{j=1}^{N}\varphi\left(z_{j}\right)-\tau\right|\ge\eta+\omega\left(\varphi;t\right)}.
\]
Put $Z_{N}=\left\{ n_{\truncscaledpoly}\left(1+t\right)\le pr^{2}\right\} \cap L_{\varphi,\tau,\eta}^{N}\subset\bbc^{N}$.
We showed in Section \ref{subsec:reduc_mod_energ_problem} that
\begin{equation}
\setd{\mu_{\vec z}^{t}}{\vec z\in Z_{N}}\subset\setd{\mu_{\vec z}^{t}}{\mu_{\vec z}^{t}\left(D\right)\le\frac{p}{\alpha}\mbox{ and }\vec z\in L_{\varphi,\tau,\lambda}^{N}}\subset\calF_{p}\cap\calL_{\varphi,\tau,\eta}\defeq\calZ,\label{eq:inclusion_for_smoothed_meas}
\end{equation}
where we used (\ref{eq:inclusion_of_meas}). The bound (\ref{eq:prob_upp_bnd_est})
gives
\begin{multline}
\log\pr{\left\{ n_{\truncscaledpoly}\left(1+t\right)\le pr^{2}\right\} \cap L_{\varphi,\tau,\eta}^{N}\cap\kregevent N}\\
\quad\quad\le-N^{2}\left[\inf_{\nu\in\calZ}\funccontparam{\alpha}{\nu}-\frac{1}{2}\log\left(\frac{N}{L^{2}}\right)+\frac{3}{4}\right]+L^{2}\log L\cdot O\left(\log L+\log\frac{1}{t}+tL^{2}\sqrt{\log L}\right)\\
=-N^{2}\left[\inf_{\nu\in\calZ}\funccontparam{\alpha}{\nu}-\frac{1}{2}\log\alpha+\frac{3}{4}\right]+O\left(r^{2}\log^{2}r\right).\quad\quad\quad\quad\quad\quad\quad\quad\quad\quad\quad\quad\quad\quad\quad\quad\,\,\label{eq:bnd_prob_fluct_and_lin_stats}
\end{multline}
We can bound the probability of the event $\pr{\left\{ n_{\truncscaledpoly}\left(1+t\right)\ge pr^{2}\right\} \cap L_{\varphi,\tau,\eta}^{N}\cap\kregevent N}$
in a similar way. 

\subsection{\label{subsec:large_fluct_in_lin_stats}Large fluctuations in the
number of zeros and linear statistics}

Let $\kappa\ge0$ be sufficiently large (depending on $r$). Recall
the event
\[
F\left(p\right)\cap L\left(p,\varphi,\kappa\right)=F\left(p;r\right)\cap L\left(p,\varphi,\kappa;r\right)=\left\{ n_{\gef}\left(r\right)\le pr^{2}\right\} \cap\left\{ \left|\linstats{\gef}{\varphi}r-r^{2}\int_{\bbc}\varphi\left(w\right)\,\dd\minmeasglob p\left(w\right)\right|\ge\kappa\right\} .
\]
By the definition of the measures $\minmeassup p{\alpha}$ and $\minmeasglob p$,
we have for $\alpha$ sufficiently large (depending on $p$ and $B$)
\[
r^{2}\int_{\bbc}\varphi\left(w\right)\,\dd\minmeasglob p\left(w\right)=r^{2}\cdot\alpha\int_{\bbc}\varphi\left(w\right)\,\dd\minmeassup p{\alpha}\left(w\right)=N\cdot\int_{\bbc}\varphi\left(w\right)\,\dd\minmeassup p{\alpha}\left(w\right).
\]
In addition, using (\ref{eq:lin_stats_poly_L_lin_stats}), we get
\[
\sum_{z\in\zeroset{\truncscaledpoly}}\varphi\left(z\cdot\left(\frac{r}{L}\right)^{-1}\right)=n_{\truncscaledpoly}\left(\varphi;\frac{r}{L}\right)=n_{\gef}\left(\varphi;r\right)+O\left(M_{0}\cdot\omega\left(\varphi;K_{0}r^{-1}\right)\right),
\]
where $\zeroset{\truncscaledpoly}=\left\{ z_{1},\dots,z_{N}\right\} $
is the zero set of the polynomial $\truncscaledpoly$. Since $\frac{r}{L}=1+O\left(r^{2-C_{2}}\right)$
by (\ref{eq:def_param_L}), and because $\varphi$ is supported inside
$\disc 0B$, we obtain
\begin{eqnarray*}
\sum_{j=1}^{N}\varphi\left(z_{j}\right) & = & \sum_{j=1}^{N}\varphi\left(z_{j}\cdot\left(\frac{r}{L}\right)^{-1}\right)+O\left(N\cdot B\cdot r^{2-C_{2}}\right)\\
 & = & n_{\gef}\left(\varphi;r\right)+O_{B}\left(r^{4-C_{2}}\log r+r^{3}\cdot\omega\left(\varphi;Cr^{2-C_{2}}\right)\right)\\
 & = & n_{\gef}\left(\varphi;r\right)+O_{B}\left(N\cdot E_{1}\left(\varphi;r\right)\right),
\end{eqnarray*}
where $N\cdot E_{1}\left(\varphi;r\right)=r^{3}\left(r^{2-C_{2}}+\omega\left(\varphi;Cr^{2-C_{2}}\right)\right)$,
using $N=O\left(r^{2}\log r\right)$. 
\begin{rem}
\label{rem:mod_of_cont_test_func} If the test function $\varphi$
depends on $r$ in such a way that its modulus of continuity satisfies
$\omega\left(\varphi,t\right)=O\left(r^{C_{3}}t^{C_{4}}\right)$,
for some numerical constants $C_{3},C_{4}>0$, then we can choose
$C_{2}$ sufficiently large to make $C_{B}\cdot N\cdot E_{1}\left(\varphi;r\right)\le C$,
for $r$ sufficiently large.
\end{rem}
We conclude that on the event $L\left(p,\varphi,\kappa;r\right)\cap\kregevent N$,
we have
\begin{equation}
\left|\frac{1}{N}\sum_{j=1}^{N}\varphi\left(z_{j}\right)-\int_{\bbc}\varphi\left(w\right)\,\dd\minmeassup p{\alpha}\left(w\right)\right|\ge\frac{\kappa}{N}-C_{B}\cdot E_{1}\left(\varphi;r\right)\defeq\frac{\kappa_{1}}{N},\label{eq:diff_emp_meas_and_min_meas}
\end{equation}
with some constant $C_{B}\ge1$, depending only on $B$. Let $\tau=\int_{\bbc}\varphi\left(w\right)\,\dd\minmeassup p{\alpha}\left(w\right)$
and assume that $\kappa$ is sufficiently large so that $\kappa_{1}>0$.
Claim \ref{claim:low_bnd_func_for_far_from_min_meas} (with $\lambda=\frac{\kappa_{1}}{N})$
implies that
\[
\funccontparam{\alpha}{\nu}\ge\funccontparam{\alpha}{\minmeassup p{\alpha}}+\frac{C}{\Dnorm{\varphi}}\cdot\left(\frac{\kappa_{1}}{N}\right)^{2},\quad\nu\in\calZ.
\]
By (\ref{eq:bnd_prob_fluct_and_lin_stats}), (\ref{eq:diff_emp_meas_and_min_meas}),
we have
\begin{eqnarray}
\log\pr{F\left(p;r\right)\cap L\left(p,\varphi,\kappa;r\right)\cap\kregevent N} & \le & \log\pr{\left\{ n_{\truncscaledpoly}\left(1+t\right)\le pr^{2}\right\} \cap L_{\varphi,\tau,\kappa_{1}}^{N}\cap\kregevent N}\nonumber \\
 & \le & -N^{2}\left[\funccontparam{\alpha}{\minmeassup p{\alpha}}+\frac{C}{\Dnorm{\varphi}}\cdot\left(\frac{\kappa_{1}}{N}\right)^{2}-\frac{1}{2}\log\alpha+\frac{3}{4}\right]+O\left(r^{2}\log^{2}r\right)\nonumber \\
 & \le & \log\pr{F\left(p;r\right)}-\frac{C}{\Dnorm{\varphi}}\cdot\kappa_{1}^{2}+O\left(r^{2}\log^{2}r\right),\label{eq:upp_bnd_lin_stat_dev}
\end{eqnarray}
where we used the bound $\frac{\kappa}{N}\ge\frac{\kappa_{1}}{N}+\omega\left(\varphi;t\right)$
in the first inequality (since $\omega\left(\varphi;t\right)\le E_{1}\left(\varphi;r\right)$),
and the bound 
\[
-N^{2}\left[\funccontparam{\alpha}{\minmeassup p{\alpha}}-\frac{1}{2}\log\alpha+\frac{3}{4}\right]=-Z_{p}r^{4}\le\log\pr{F\left(p;r\right)}+O\left(r^{2}\log^{2}r\right),
\]
from Section \ref{sec:low_bnd_for_large_flucs}, in the third inequality.

\subsubsection{Finishing the proof of Theorem \ref{thm:devs_in_emp_meas}}

Rewriting (\ref{eq:upp_bnd_lin_stat_dev}) we find that
\[
\pr{F\left(p\right)\cap L\left(p,\varphi,\kappa\right)\cap\kregevent N}\le\pr{F\left(p\right)}\exp\left(-\frac{C}{\Dnorm{\varphi}}\cdot\kappa_{1}^{2}+O\left(r^{2}\log^{2}r\right)\right),
\]
with $\kappa_{1}=\kappa-C_{B}N\cdot E_{1}\left(\varphi;r\right)$.
By Remark (\ref{rem:mod_of_cont_test_func}), if $\omega\left(\varphi,t\right)=O\left(r^{C_{3}}t^{C_{4}}\right)$
with some constants $C_{3},C_{4}>0$, we can choose $C_{2}$ sufficiently
large, so that $\kappa_{1}\ge\kappa-C$, if $r$ is sufficiently large.
We thus have,
\begin{eqnarray*}
\pr{F\left(p\right)\cap L\left(p,\varphi,\kappa\right)} & \le & \pr{\kregevent c}+\sum_{N=N_{0}}^{N_{1}}\pr{F\left(p\right)\cap L\left(p,\varphi,\kappa\right)\cap\kregevent N}\\
 & \le & \pr{\kregevent c}+\left(N_{1}-N_{0}+1\right)\pr{F\left(p\right)}\exp\left(-\frac{C}{\Dnorm{\varphi}}\cdot\kappa_{1}^{2}+O\left(r^{2}\log^{2}r\right)\right)\\
 & \le & \pr{\kregevent c}+\pr{F\left(p\right)}\exp\left(-\frac{C}{\Dnorm{\varphi}}\cdot\kappa^{2}+\frac{C\kappa}{\Dnorm{\varphi}}+O\left(r^{2}\log^{2}r\right)\right).
\end{eqnarray*}
We can assume w.l.o.g. that $\sup\setd{\varphi\left(w\right)}{w\in\bbc}=1$,
and that $\Dnorm{\varphi}>c$ for some constant $c=c\left(B\right)>0$
(since $\varphi$ is supported on $\disc 0B$). In addition, by Remark
\ref{rem:bnd_for_reg_event_comp}, we can choose $C_{5}>0$ as large
as we wish (but fixed). such that $\pr{\kregevent c}\le\exp\left(-C_{5}r^{4}\right)$
for $r$ sufficiently large. Finally, we conclude that for $\kappa\le Cr^{2}$,
we have
\[
\pr{F\left(p\right)\cap L\left(p,\varphi,\kappa\right)}\le\pr{F\left(p\right)}\exp\left(-\frac{C}{\Dnorm{\varphi}}\cdot\kappa^{2}+O\left(r^{2}\log^{2}r\right)\right).
\]
This completes the proof of Theorem \ref{thm:devs_in_emp_meas} in
the case $p\in\left[0,1\right)$. The proofs of the other cases go
along the same lines, and we leave them to the reader. We note that
in the case $p\ge e$, the constant in the statement of the theorem
may depend on $p$.
\begin{rem}
Recall that $F\left(0\right)=F\left(0;r\right)=H_{r}$ is the hole
event for $\left\{ \left|z\right|<r\right\} $. Let $\varepsilon\in\left(r^{-2},1\right)$
and $\gamma\in\left(1,2\right]$. Theorem \ref{thm:few_zeros_in_the_gap}
in Section \ref{sec:intro} follows from Theorem \ref{thm:devs_in_emp_meas}
by considering a positive (say radial) test function $\varphi=\varphi_{\varepsilon}$,
such that
\[
\varphi\left(z\right)=\varphi\left(\left|z\right|\right)=\begin{cases}
1 & \,1+\varepsilon\le\left|z\right|\le\sqrt{e}-\varepsilon;\\
0 & \,\left|z\right|\le1\mbox{ or }\left|z\right|\ge\sqrt{e}.
\end{cases}
\]
We note that one can construct such a $\varphi$ so that it satisfies
$\Dnorm{\varphi}=O\left(\varepsilon^{-1}\right)$ and $\omega\left(\varphi,t\right)=O\left(\varepsilon^{-1}t\right)$.
This implies
\begin{eqnarray*}
\condpr{n_{\gef}\left(\left\{ r\left(1+\varepsilon\right)\le\left|z\right|\le\sqrt{e}r\left(1-\varepsilon\right)\right\} \right)>r^{\gamma}}{F\left(0\right)} & \le & \condpr{\linstats{\gef}{\varphi}r>r^{\gamma}}{F\left(0\right)}\\
 & \le & \exp\left(-\frac{C}{\Dnorm{\varphi}}r^{2\gamma}\right)\\
 & \le & \exp\left(-C\varepsilon r^{2\gamma}\right),
\end{eqnarray*}
provided $r^{\gamma}>C_{5}\sqrt{\Dnorm{\varphi}}r\log r$ (which is
satisfied if $\gamma\in\left(1+\frac{1}{2}\log\frac{1}{\varepsilon}\left(\log r\right)^{-1},2\right]$,
and $r$ is sufficiently large).
\end{rem}

\subsection{Convergence of the counting measure - Proof of Theorem \ref{thm:conv_emp_meas_for_hole}}

The proof of Theorem \ref{thm:conv_emp_meas_for_hole} is straightforward
and we include it for completeness. We will need the following result,
we leave its simple proof to the reader.
\begin{claim}
\label{claim:bound_for_expec} Let $X$ be a real random variable
with finite mean, and $a\in\bbr$, $T\ge0$. We have
\[
\exabs{X-a}\le T+\int_{0}^{\infty}\pr{\left|X-a\right|>s+T}\,\dd s.
\]
We write $X_{\varphi}=X_{\varphi}\left(r\right)=\left.\linstats{\gef}{\varphi}r\right|_{F\left(p\right)}$
for the random variable $\linstats{\gef}{\varphi}r$ conditioned on
the event $F\left(p\right)$. Applying Claim \ref{claim:bound_for_expec}
to $X=X_{\varphi}$, $a=r^{2}\int_{\bbc}\varphi\left(w\right)\,\dd\minmeasglob p\left(w\right)$,
$T=Cr\log^{2}r$, and using Theorem \ref{thm:devs_in_emp_meas}, we
get
\begin{eqnarray*}
\left|\ex{X_{\varphi}}-r^{2}\int_{\bbc}\varphi\left(w\right)\,\dd\minmeasglob p\left(w\right)\right| & \le & \ex{\left|X_{\varphi}-r^{2}\int_{\bbc}\varphi\left(w\right)\,\dd\minmeasglob p\left(w\right)\right|}\\
 & \le & C\left(r\log^{2}r+\sqrt{\Dnorm{\varphi}}\right).
\end{eqnarray*}
Recall that $\cntmeas{\condzeroset r^{p}}$ is the counting measure
of the zeros of $\gef$ on the event $F\left(p;r\right)$, and denote
by $\cntmeas{\widetilde{\calZ}_{r}^{p}}=\frac{1}{r^{2}}\cntmeas{\condzeroset r^{p}}\left(\frac{\cdot}{r}\right)$
the scaled counting measure. In order to prove 
\[
\cntmeas{\widetilde{\calZ}_{r}^{p}}\xrightarrow[r\to\infty]{v}\minmeasglob p\quad\mbox{in distribution,}
\]
we have to show that for every $\phi\in C_{0}\left(\bbc\right)$ a
continuous test function with compact support, the random variable
\[
\int_{\bbc}\phi\left(w\right)\,\dd\cntmeas{\widetilde{\calZ}_{r}^{p}}\left(w\right)=\frac{1}{r^{2}}\left.\linstats{\gef}{\phi}r\right|_{F\left(p\right)}=r^{-2}X_{\phi},
\]
converges in distribution to $\int_{\bbc}\phi\left(w\right)\,\dd\minmeasglob p\left(w\right)$
(since the limit is a constant, this is the same as convergence in
probability). Suppose $\phi$ is supported on the disk $\disc 0B$,
where $B\ge1$, and let $\varphi\in C_{0}^{2}\left(\bbc\right)$ be
a smooth test function, supported on the disk $\disc 0{B+1}$, such
that $\left|\varphi-\phi\right|\le\delta$. In particular, we have
\begin{eqnarray*}
\left|X_{\phi}-X_{\varphi}\right| & \le & \delta\cdot n_{\gef}\left(B+1\right),\\
\left|\int_{\bbc}\phi\left(w\right)\,\dd\minmeasglob p\left(w\right)-\int_{\bbc}\varphi\left(w\right)\,\dd\minmeasglob p\left(w\right)\right| & \le & C\delta B.
\end{eqnarray*}
By Theorem \ref{thm:very_large_fluct}, we have
\[
\condpr{n_{\gef}\left(B+1\right)>C_{B,p}r^{2}}{F\left(p\right)}\le\exp\left(-Cr^{4}\right),
\]
for some constant $C_{B,p}$, depending only on $B$ and $p$. Therefore,
\begin{gather*}
\pr{\left|r^{-2}X_{\phi}-\int_{\bbc}\phi\left(w\right)\,\dd\minmeasglob p\left(w\right)\right|>\varepsilon}\quad\quad\quad\quad\quad\quad\quad\quad\quad\quad\quad\quad\quad\quad\quad\quad\quad\quad\quad\quad\quad\quad\quad\quad\quad\\
\le\pr{\left|r^{-2}X_{\varphi}-\int_{\bbc}\varphi\left(w\right)\,\dd\minmeasglob p\left(w\right)\right|>\varepsilon-C\delta B}+\pr{\left|X_{\phi}-X_{\varphi}\right|>\varepsilon r^{2}}\quad\quad\quad\quad\\
\le\pr{\left|r^{-2}X_{\varphi}-\int_{\bbc}\varphi\left(w\right)\,\dd\minmeasglob p\left(w\right)\right|>\varepsilon-C\delta B}+\condpr{n_{\gef}\left(B+1\right)\ge\frac{\varepsilon}{\delta}r^{2}}{F\left(p\right)}.
\end{gather*}
Choosing $\delta$ sufficiently small, depending on $\varepsilon$,
$\phi$, and $p$, and using Theorem \ref{thm:devs_in_emp_meas},
we find that
\[
\pr{\left|r^{-2}X_{\phi}-\int_{\bbc}\phi\left(w\right)\,\dd\minmeasglob p\left(w\right)\right|>\varepsilon}\le\exp\left(-C_{\varepsilon,\phi,p}r^{4}\right).
\]
This concludes the proof of Theorem \ref{thm:conv_emp_meas}.
\end{claim}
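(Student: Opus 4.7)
The plan is to combine the standard layer-cake identity for the expectation of a nonnegative random variable with a simple pointwise comparison. First I would observe the elementary inequality
\[
\left|X-a\right| \le T + \left(\left|X-a\right|-T\right)_{+},
\]
which holds pointwise: on the event $\{|X-a|>T\}$ the two sides are equal, while on its complement the left-hand side is at most $T$. Taking expectations yields
\[
\exabs{X-a} \le T + \bbe\bigl[(|X-a|-T)_{+}\bigr].
\]

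Next I would invoke Fubini's theorem, or equivalently the layer-cake formula, to rewrite the expectation of the nonnegative random variable $Y=(|X-a|-T)_{+}$ as its tail integral,
\[
\bbe\bigl[Y\bigr] = \int_{0}^{\infty} \pr{Y>s} \, \dd s.
\]
Since for every $s\ge 0$ the events $\{Y>s\}$ and $\{|X-a|>s+T\}$ coincide, this becomes $\bbe[Y]=\int_{0}^{\infty}\pr{|X-a|>s+T}\,\dd s$; substituting into the previous display gives the claimed inequality.

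The only point requiring care is finiteness of the various integrals needed to apply Fubini, but this is guaranteed by the standing hypothesis that $X$ has finite mean, so that $|X-a|$ and $(|X-a|-T)_{+}$ do as well. There is no real obstacle to overcome here; the statement is a routine tail-integral bound of the kind used throughout probability theory to convert deviation inequalities into moment bounds. In the intended application later in the section, taking $T$ of order $r\log^{2} r$ and feeding the Gaussian-type tail bound from Theorem \ref{thm:devs_in_emp_meas} into the integral $\int_0^\infty \pr{\dots>s+T}\,\dd s$ will yield the desired $O(r\log^{2} r)$ estimate for the conditional expectation of $\linstats{\gef}{\varphi}r$.
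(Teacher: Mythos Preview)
Your proof is correct and is exactly the standard argument the paper has in mind when it says ``we leave the simple proof to the reader'': the pointwise bound $|X-a|\le T+(|X-a|-T)_+$ followed by the layer-cake identity $\bbe[Y]=\int_0^\infty\pr{Y>s}\,\dd s$ for the nonnegative variable $Y=(|X-a|-T)_+$. There is nothing to add.
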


\section{\label{sec:discussion}Discussion}

In this paper, we considered the GEF,
\[
\gef\left(z\right)=\sum_{k=0}^{\infty}\xi_{k}\frac{z^{k}}{\sqrt{k!}},\quad z\in\bbc.
\]
The zero set of $\gef$ is the only translation invariant zero set
of a Gaussian entire function up to scaling (and multiplication by
a non-random entire function with no zeros, see \cite[Sec. 2.5]{hough2009zeros}).
We mention that there exist similar constructions for other domains
with transitive groups of isometries (the hyperbolic plane, the Riemann
sphere, the cylinder and the torus, see \cite[Sec. 2.3]{hough2009zeros}
for some examples).

\subsection{Asymptotic probability of large fluctuations in the number of zeros}

\subsubsection{The Hyperbolic GAF}

The hyperbolic GAF is the following Gaussian Taylor series,
\[
F_{D}\left(z\right)=\sum_{k=0}^{\infty}\xi_{k}z^{k},\quad\left|z\right|<1.
\]
It is known that its zero set is invariant with respect to the isometries
of the unit disk (\cite[Sec. 2.3]{hough2009zeros}). Peres and Virág
\cite{peres2005zeros} proved that this zero set is a determinantal
point process (see \cite[Chap. 4]{hough2009zeros}); this is the only
example of this type). Denote by $n_{F_{D}}\left(r\right)$ the number
of zeros of $F_{D}$ in $D\left(0,r\right)$ ($0<r<1$). Using the
representation of $n_{F_{D}}\left(r\right)$ as a sum of independent
Bernoulli random variables, they found the asymptotics of the hole
probability, as $r\to1$ (see \cite[Corollary 5.1.8.]{hough2009zeros}).
More recent results about the hole probability for GAFs in the unit
disk can be found in \cite{buckley2018hole,skaskiv2013probability}.

\subsubsection{Some known results for the GEF and other Gaussian entire functions}

Let us denote by $n\left(r\right)=n_{\gef}\left(r\right)$ the number
of zeros of the GEF inside the disk $\disc 0r$. As we mentioned in
Section \ref{sec:upp_bnd_for_larg_flucs}, the Edelman-Kostlan formula
implies that $\ex{n\left(r\right)}=r^{2}.$ In the paper \cite{sodin2005random},
Sodin and Tsirelson considered large fluctuations in the number of
zeros of the GEF, and proved that for every $\delta\in\left(0,\frac{1}{4}\right]$,
\[
\pr{\left|n\left(r\right)-r^{2}\right|\ge\delta r^{2}}\le\exp\left(-c\left(\delta\right)r^{4}\right),\quad\mbox{as }r\to\infty,
\]
with some unspecified positive constant $c\left(\delta\right)$. In
the case where the GEF has no zeros in the disk $\disc 0r$ (i.e.
the `hole' event) they showed $\pr{n\left(r\right)=0}\ge\exp\left(-Cr^{4}\right)$.
In the paper \cite{nishry2010asymptotics}, the second author found
that the logarithmic asymptotics of the hole probability are given
by
\[
\log\pr{n\left(r\right)=0}=-\frac{e^{2}}{4}r^{4}+o\left(r^{4}\right),\quad r\to\infty.
\]
This result was later generalized by the second author to include
entire functions represented by Gaussian Taylor series with arbitrary
coefficients (see \cite{nishry2012hole,nishry2013topics}).

\subsubsection{Large fluctuations results for the Ginibre ensemble}

Let us denote by $\cntmeas{\calG}$ the random counting measure of
the infinite Ginibre ensemble. It is known that this process is a
determinantal point process. In particular, for a compact set $K\subset\bbc$,
the random variable $\cntmeas{\calG}\left(K\right)$ can be expressed
as a sum of \textit{independent} Bernoulli random variables (\cite[Theorem 4.5.3 and Remark 4.5.4]{hough2009zeros}).
Shirai \cite{shirai2006large} proved the following result (corresponding
to our Theorem \ref{thm:very_large_fluct}):
\[
\pr{\cntmeas{\calG}\left(\disc 0r\right)=\left\lfloor pr^{2}\right\rfloor }=\exp\left(-G_{p}\cdot r^{4}+o\left(r^{4}\right)\right),\quad\mbox{as }r\to\infty,
\]
where
\[
G_{p}=\left|\int_{1}^{p}\left(1-x+x\log x\right)\,\dd x\right|.
\]
This provides a rigorous proof for some of the results of the paper
\cite{jancovici1993large} (obtained for the finite Ginibre ensemble,
in particular). The graphs of the constants $G_{p}$ and $Z_{p}$
are shown in Figure \ref{fig:GpZp}. Recently, Adhikari and Reddy
\cite{adhikari2016hole} found the asymptotics of the hole probability
for non-circular domains (for both the finite and infinite Ginibre
ensembles). We plan to consider this problem for the GEF in a future
paper.
\begin{center}
\begin{tabular}{cc}
\includegraphics[scale=0.45]{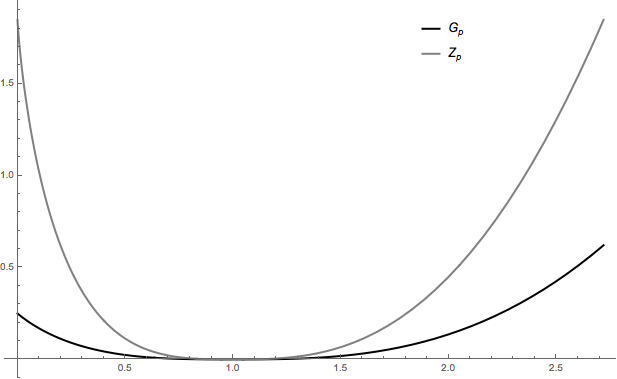} & \tabularnewline
\begin{tabular}{c}
Figure 8.1\manuallabel{fig:GpZp}{8.1} - The constants $G_{p}$ and
$Z_{p}$, $p\in\left[0,e\right]$.\tabularnewline
\end{tabular} & \tabularnewline
\end{tabular}
\par\end{center}

\subsubsection{Some related results for the Gaussian unitary ensemble (GUE)}

A problem similar to ours has been studied in the physical literature
(\cite{majumdar2011many}) in the one-dimensional setting. More precisely,
consider Hermitian Gaussian random matrices (sampled from the GUE
ensemble). Since the matrix is Hermitian, the eigenvalues are real,
and the point process of eigenvalues is one-dimensional. The problem
which is analogous to ours, is to find the limiting conditional distribution
of the eigenvalues, given that there is a ``gap'' in the (macroscopic)
interval $\left(-w\sqrt{N},w\sqrt{N}\right)$, where the dimension
of the matrix $N$ goes to infinity, and $w>0$ is a fixed number.
Considering a constrained variational problem somewhat similar to
our case, the authors are able to obtain a description of the minimizing
measure. An important feature of this minimizing measure is that it
has a density with respect to the Lebesgue measure (unlike the two-dimensional
setting, where we find the appearance of a singular component in both
the Ginibre and the GEF zero ensembles). Furthermore, there is no
forbidden region, compared to our result in the case of the GEF zero
ensemble.

\subsection{The Jancovici-Lebowitz-Manificat Law}

In the paper \cite{nazarov2008jancovici}, Nazarov, Sodin, and Volberg
studied a wider range of fluctuations in the random variable $n\left(r\right)$.
For fixed $b>\frac{1}{2}$ and any $\varepsilon>0$, they obtained
the following result
\[
-r^{\psi\left(b\right)+\varepsilon}\le\log\pr{\left|n\left(r\right)-r^{2}\right|>r^{b}}\le-r^{\psi\left(b\right)-\varepsilon},\quad r\ge r_{0}\left(b,\varepsilon\right),
\]
where
\[
\psi\left(b\right)=\begin{cases}
2b-1 & \,\frac{1}{2}<b\le1;\\
3b-2 & \,1\le b\le2;\\
2b & \,b\ge2.
\end{cases}
\]
Some of the cases were previously proved by Krishnapur in \cite{krishnapur2006overcrowding}
(and also in \cite{sodin2005random}), in particular he showed
\[
\log\pr{n\left(r\right)>r^{b}}=-\left(\frac{b}{2}-1\right)\left(1+o\left(1\right)\right)r^{2b}\log r,\quad b>2,\quad r\to\infty.
\]
These results are in agreement with a law discovered earlier by Jancovici,
Lebowitz, and Manificat in their physical paper \cite{jancovici1993large}.
This paper considered charge fluctuations of a one-component Coulomb
system of particles of one sign embedded into a uniform background
of the opposite sign (the finite Ginibre ensemble being a special
case).

Our methods allows us to also consider smaller fluctuations in $n\left(r\right)$.
For fixed constants $a,b$, with $a>0$ and $b\in\left(\frac{4}{3},2\right)$,
we have
\begin{equation}
\pr{n\left(r\right)=\left\lfloor r^{2}-ar^{b}\right\rfloor },\,\pr{n\left(r\right)=\left\lfloor r^{2}+ar^{b}\right\rfloor }=\exp\left(-\frac{2a^{3}}{3}\cdot r^{3b-2}\left(1+o\left(1\right)\right)\right),\quad r\to\infty.\label{eq:JLM_fluctuations_for_GEF}
\end{equation}
We can actually obtain the lower bound for $b\in\left(1,2\right)$.
Previously, Krishnapur (\cite{krishnapur2006overcrowding}) found
the lower bound
\[
\pr{n\left(r\right)=\left\lfloor r^{2}+ar^{b}\right\rfloor }\ge\exp\left(-a^{3}r^{3b-2}\left(1+o\left(1\right)\right)\right),\quad b\in\left(1,2\right),\quad r\to\infty.
\]
It is plausible the result (\ref{eq:JLM_fluctuations_for_GEF}) holds
in the whole range $b\in\left(1,2\right)$. 

\subsection{Large deviations and the modified weighted energy functional $\protect\funccont{\nu}$}

Let us fix $\alpha\ge e,$ and for $N\in\bbn$ write $L=\sqrt{\alpha^{-1}N}$.
Consider the following polynomials with independent standard complex
Gaussian coefficients $\xi_{k}$,
\[
\truncscaledpoly\left(z\right)=\sum_{k=0}^{N}\xi_{k}\frac{\left(Lz\right)^{k}}{\sqrt{k!}},\quad z\in\bbc.
\]
Denote by $\mu_{N}=\frac{1}{N}\sum_{j=1}^{N}\delta_{z_{j}}$ the empirical
measure of the zeros of $\truncscaledpoly$. In \cite{zeitouni2010large},
Zeitouni and Zeldich prove a large deviation principle (LDP) for the
sequence of measures $\mu_{N}$ (and for more general Gaussian polynomials).
For $\nu\in\probmeas$, a probability measure on $\bbc$, let $\logpot{\nu}z$,
$\logenerg{\nu}$ be its logarithmic potential and logarithmic energy,
respectively (see the notation section for the definitions). In addition,
let us define the following functional
\begin{equation}
\funccont{\nu}=\funccontparam{\alpha}{\nu}=2\sup_{w\in\bbc}\left\{ \logpot{\nu}w-\frac{\left|w\right|^{2}}{2\alpha}\right\} -\logenerg{\nu},\label{eq:LDP_functional}
\end{equation}
which in the terminology of large deviations is called the \textit{rate
function}. The LDP means that for a Borel subset $\calC\subset\probmeas$,
we have
\begin{equation}
-\inf_{\nu\in\calC^{\circ}}\funccont{\nu}+A_{\alpha}\le\liminf_{N\to\infty}\frac{1}{N^{2}}\log\pr{\mu_{N}\in\calC}\le\limsup_{N\to\infty}\frac{1}{N^{2}}\log\pr{\mu_{N}\in\calC}\le-\inf_{\nu\in\overline{\calC}}\funccont{\nu}+A_{\alpha},\label{eq:LDP_emp_meas_poly_zeros}
\end{equation}
where $\overline{\calC}$ (resp. $\calC^{\circ}$) is the closure
(resp. interior) of $\calC$ in the weak topology, and $A_{\alpha}=\frac{\log\alpha}{2}-\frac{3}{4}$.
\begin{rem}
This functional was introduced for the first time in the paper \cite{zeitouni2010large}.
In the papers \cite{BenArous1997,arous1998large,hiai1998maximizing}
on large deviations for (Gaussian) random matrices, the following
functional appears
\[
\funcenerg{\nu}=J_{\alpha}\left(\nu\right)=\int_{\bbc}\frac{\left|w\right|^{2}}{\alpha}\,\dd\nu\left(w\right)-\logenerg{\nu}.
\]
In potential theory, the functional $\funcenerg{\nu}$ is known as
the \textit{weighted energy functional} (see \cite{saff2013logarithmic}).
\end{rem}
Let $n_{\truncscaledpoly}\left(D\right)$ be the number of zeros of
$\truncscaledpoly$ in the unit disk. Combining the LDP with the results
of Section \ref{sec:sol_of_energy_prob}, gives (for a fixed $\alpha$)
\begin{eqnarray*}
\lim_{N\to\infty}\frac{1}{N^{2}}\log\pr{n_{\truncscaledpoly}\left(D\right)\le pL^{2}} & = & -Z_{p},\quad p\in\left(0,1\right),\\
\lim_{N\to\infty}\frac{1}{N^{2}}\log\pr{n_{\truncscaledpoly}\left(D\right)\ge pL^{2}} & = & -Z_{p},\quad p\in\left(1,\alpha\right),
\end{eqnarray*}
where $Z_{p}$ is the constant appearing in Theorem \ref{thm:very_large_fluct}.
In the case $p=0$, the LDP can only give a non-trivial upper bound,
since the set $\calF_{0}=\setd{\nu\in\probmeas}{\nu\left(D\right)=0}$
has empty interior. Theorem \ref{thm:very_large_fluct} can be seen
as an effective version of the LDP for the zeros of the GEF (for these
particular questions).

\subsection{The conditional distribution of the zeros}

In the context of large deviations theory, the convergence of the
empirical measure to a limit measure under conditioning is called
the \textit{Gibbs conditioning principle} (see \cite{dembo1996refinements},
\cite[Sec. 7.3]{dembozeitouni2010book}). This limit measure is given
by the minimizer of a rate function under the constraint. In our case,
the measure $\minmeasglob p$ is the limit of measures $\alpha\minmeassup p{\alpha}$
as $\alpha\to\infty$. Here the probability measures $\minmeassup p{\alpha}$
are the minimizers of the functional $\funccontparam{\alpha}{\nu}$
in (\ref{eq:LDP_functional}).

The paper \cite{jancovici1993large} describes in particular the limiting
conditional distribution for the finite Ginibre ensemble (i.e. the
minimizers of the functional $J_{\alpha}\left(\nu\right)$). One obtains
the following limiting measures:
\[
\dd\minmeasaltsup p{\alpha}\left(z\right)=\begin{cases}
\frac{1}{\alpha}\left[\ind{\left\{ \left|z\right|\le\sqrt{p}\right\} }z+\ind{\left\{ 1\le\left|z\right|\le\sqrt{\alpha}\right\} }z\right]\cdot\frac{\dd\lebmeas\left(z\right)}{\pi}+\frac{1-p}{\alpha}\dd\subslebmeas{\left|z\right|=1} & \,p\in\left[0,1\right);\\
\frac{1}{\alpha}\left[\ind{\left\{ \left|z\right|\le1\right\} }z+\ind{\left\{ \sqrt{p}\le\left|z\right|\le\sqrt{\alpha}\right\} }z\right]\cdot\frac{\dd\lebmeas\left(z\right)}{\pi}+\frac{p-1}{\alpha}\dd\subslebmeas{\left|z\right|=1} & \,p\in\left(1,\alpha\right).
\end{cases}
\]
We see there is no additional ``forbidden region'' for the eigenvalues.
In general a gap appears for the Ginibre ensemble (only) in the region
where there are less points than the expected value (see \cite{armstrong2014remarks}).
In the case of the disk and its complement, we showed that a gap appears
in both regions for the GEF. 

\subsubsection{Simulation of the conditional distribution}

It is possible to simulate the conditional distribution of the zeros
(say on the hole event) using a modified Metropolis-Hastings algorithm
\cite{landau2014guide}, which takes into account the constraint.
For the results of such a simulation of the Ginibre ensemble, see
for example \cite[Fig. 8]{ghosh2018point}. However, it seems like
this method is only efficient in practice for a few hundreds of the
GEF zeros.

To produce the figures in Section \ref{sec:intro} we used two different
methods. Figure \ref{fig:GEFcondzeros} (zeros conditioned on a hole)
is created using the ideas of the proof of the lower bound of Theorem
\ref{thm:very_large_fluct} in Section \ref{sec:low_bnd_for_large_flucs}.
Since standard complex Gaussians which are conditioned to be very
small in modulus are approximately uniform, we generate such random
variables, in a way that the GEF will have no zeros inside the disk
of radius $r=13$. Figure \ref{fig:GinibreCond} is created by simply
moving the eigenvalues of a large random Ginibre matrix from the disk
$\left\{ \left|z\right|<13\right\} $ to the boundary (for a more
convincing simulation see the reference above).

\subsection{Large deviations for linear statistics}

Let $\varphi\in C_{0}^{2}\left(\bbc\right)$ be an arbitrary compactly
supported test function. The following result is known as Offord\textquoteright s
estimate (see \cite[Theorem 7.1.1]{hough2009zeros}, \cite{sodin2000zeros}),
\begin{eqnarray*}
\pr{\left|\linstats{\gef}{\varphi}r-\frac{r^{2}}{\pi}\int_{\bbc}\varphi\left(w\right)\,\dd\lebmeas\left(w\right)\right|\ge\lambda} & = & \pr{\left|\int_{\bbc}\varphi\left(\frac{w}{r}\right)\,\dd n_{\gef}\left(w\right)-\frac{1}{\pi}\int_{\bbc}\varphi\left(\frac{w}{r}\right)\,\dd\lebmeas\left(w\right)\right|\ge\lambda}\\
 & \le & 3\exp\left(\frac{-\pi\lambda}{\left\Vert \Delta\varphi\right\Vert _{L^{1}\left(\lebmeas\right)}}\right),\quad\lambda>0,
\end{eqnarray*}
where we used $\left\Vert \Delta\varphi\left(\frac{\cdot}{r}\right)\right\Vert _{L^{1}\left(\lebmeas\right)}=\left\Vert \Delta\varphi\right\Vert _{L^{1}\left(\lebmeas\right)}$.
We mention that this bound is valid in general for Gaussian analytic
functions. The following bound can be derived from our proof of Theorem
\ref{thm:devs_in_emp_meas},
\[
\pr{\left|\linstats{\gef}{\varphi}r-\frac{r^{2}}{\pi}\int_{\bbc}\varphi\left(w\right)\,\dd\lebmeas\left(w\right)\right|\ge\lambda}\le\exp\left(-\frac{C}{\left\Vert \nabla\varphi\right\Vert _{L^{2}\left(\lebmeas\right)}^{2}}\cdot\lambda^{2}+O\left(r^{2}\log^{2}r\right)\right),\quad\lambda>0.
\]

\appendix

\section{\label{sec:joint_density}The Joint Distribution of The Zeros}

Let $L>0$ and $N\in\bbn^{+}$. We want to find the joint probability
density of the zeros of the polynomial
\[
P\left(z\right)=\truncscaledpoly\left(z\right)=\sum_{k=0}^{N}\xi_{k}\frac{\left(Lz\right)^{k}}{\sqrt{k!}},
\]
where $\xi_{k}$ are i.i.d. standard complex Gaussians. This requires
a change of variables, from the coefficients to the zeros (cf. the
more general \cite[Proposition 3]{zeitouni2010large}). We use the
fact that the Jacobian determinant of this transformation can be expressed
in a simple way in terms of the zeros.
\begin{lem}
\label{lem:poly_change_of_vars}Let $\vec z=\lvec zN$ be the zeros
of $\truncscaledpoly\left(z\right)$ in uniform random order. The
joint distribution of $\vec z,$ w.r.t. Lebesgue measure on $\bbc^{N}$,
is given by
\[
f\left(\vec z\right)=f\left(z_{1},\dots,z_{N}\right)=\normalconst\left|\Delta\left(\vec z\right)\right|^{2}\left(\int_{\bbc}\prod_{j=1}^{N}\left|w-z_{j}\right|^{2}\,\dd\refmeas\left(w\right)\right)^{-\left(N+1\right)},
\]
where
\[
A_{L}^{N}=\frac{N!\cdot\prod_{j=1}^{N}j!}{\pi^{N}L^{N\left(N+1\right)}}=\exp\left(\frac{1}{2}N^{2}\log\left(\frac{N}{L^{2}}\right)-\frac{3}{4}N^{2}+O\left(N\left(\log N+\log L\right)\right)\right).
\]
\end{lem}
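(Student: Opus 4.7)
The plan is to parametrize $\truncscaledpoly$ by its leading coefficient and tuple of zeros, and then push forward the explicit Gaussian density on the coefficients $(\xi_{0},\dots,\xi_{N})$ through this change of variables. Writing $\truncscaledpoly(z) = c\,q_{\vec z}(z)$ with $c = \xi_{N}L^{N}/\sqrt{N!}$, Vieta's formulas give $\xi_{k} = (-1)^{N-k}\sqrt{k!}\,L^{-k}\,c\,e_{N-k}(\vec z)$ for $k<N$, where $e_{m}$ denotes the $m$th elementary symmetric polynomial. The input density on $\bbc^{N+1}$ is the standard complex Gaussian $\pi^{-(N+1)}\exp\bigl(-\sum_{k=0}^{N}|\xi_{k}|^{2}\bigr)$.

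The principal step is to compute the complex Jacobian $J$ of the map $\Phi:(\vec z,c)\mapsto(\xi_{0},\dots,\xi_{N})$. After separating out $c$, the matrix is block-triangular, since $\partial\xi_{N}/\partial z_{j}=0$ and $\partial\xi_{N}/\partial c=\sqrt{N!}/L^{N}$, reducing the determinant to $\det(\partial a_{k}/\partial z_{j})$ for $0\le k\le N-1$, $1\le j\le N$, where $a_{k}$ is the $k$th coefficient of $\truncscaledpoly$. The key identity is $\partial\truncscaledpoly(z)/\partial z_{j} = -\truncscaledpoly(z)/(z-z_{j})$; evaluated at $z=z_{i}$ this produces the matrix equation $V M = -\operatorname{diag}(\truncscaledpoly'(z_{j}))$, with $V_{ik}=z_{i}^{k}$ the Vandermonde and $M_{kj}=\partial a_{k}/\partial z_{j}$. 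Dividing by $\det V = \pm\Delta(\vec z)$ and using $\prod_{j}\truncscaledpoly'(z_{j}) = \pm c^{N}\Delta(\vec z)^{2}$ gives $|\det M|^{2} = |c|^{2N}|\Delta(\vec z)|^{2}$, and folding in the factors $\sqrt{k!}/L^{k}$ from each row yields $|\det J|^{2} = L^{-N(N+1)}\bigl(\prod_{j=1}^{N}j!\bigr)\,|c|^{2N}\,|\Delta(\vec z)|^{2}$.

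For the exponent, the Bargmann--Fock orthonormality of $\{(Lw)^{k}/\sqrt{k!}\}_{k\ge 0}$ in $L^{2}(\refmeas)$ gives $\sum_{k=0}^{N}|\xi_{k}|^{2} = \int_{\bbc}|\truncscaledpoly|^{2}\,\dd\refmeas = |c|^{2}\int_{\bbc}|q_{\vec z}|^{2}\,\dd\refmeas$. Pushing forward the Gaussian coefficient density and integrating out $c$ via the polar identity $\int_{\bbc}|c|^{2N}e^{-|c|^{2}S}\,\dd m(c) = \pi N!/S^{N+1}$, with $S(\vec z)=\int|q_{\vec z}|^{2}\,\dd\refmeas$, yields exactly $f(\vec z) = A_{L}^{N}\,|\Delta(\vec z)|^{2}\,S(\vec z)^{-(N+1)}$ with $A_{L}^{N} = (N!\prod_{j=1}^{N}j!)/(\pi^{N}L^{N(N+1)})$; the $N!$ in the normalization is cleanly produced by the $c$-integration. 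The asymptotic expansion then follows from Stirling applied to $\log N! + \sum_{j=1}^{N}\log j! = \frac{1}{2}N^{2}\log N - \frac{3}{4}N^{2} + O(N\log N)$, combined with $\log L^{N(N+1)} = N(N+1)\log L$.

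The main delicacy is bookkeeping around the $S_{N}$-symmetry of $\Phi$: as a map on complex tuples it is $N!$-to-one under permutations of $z_{1},\dots,z_{N}$, so one must verify that the formal change-of-variables calculation above agrees with the ``uniform random order'' convention for the joint density of the zeros. The output is manifestly symmetric in $z_{1},\dots,z_{N}$, and the combinatorial factor can be confirmed against the $N=1$ sanity check, which reproduces the familiar density $L^{2}/(\pi(1+L^{2}|z_{1}|^{2})^{2})$ for the single root of $\xi_{0}+\xi_{1}Lz$.
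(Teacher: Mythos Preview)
Your proof is correct and follows essentially the same route as the paper: parametrize by $(\vec z,\text{leading coefficient})$, compute the Jacobian, rewrite the Gaussian exponent via the $L^{2}(\refmeas)$-orthonormality of $\{(Lw)^{k}/\sqrt{k!}\}$, and integrate out the leading coefficient. The paper factors the change of variables as $T_{1}:\vec z\mapsto\vec b$ (citing the Vandermonde Jacobian $|\Delta(\vec z)|^{2}$ from the literature) composed with the diagonal linear map $T_{2}:\vec b\mapsto(\xi_{0},\dots,\xi_{N-1})$, whereas you compute the Jacobian in one stroke via the identity $V M=-\operatorname{diag}(\truncscaledpoly'(z_{j}))$; both yield the same $|c|^{2N}|\Delta(\vec z)|^{2}\prod_{k}k!\,L^{-N(N+1)}$.

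One small caveat: your $N=1$ sanity check cannot actually confirm the $S_{N}$ bookkeeping, since $1!=1$. Both your derivation and the paper's treat the local change of variables without inserting an explicit $1/N!$ for the ``uniform random order'' convention, so you are at least consistent with the stated formula; but if you want an honest verification of the combinatorial factor you would need $N\ge 2$ (or simply observe that any discrepancy is $O(N\log N)$ and is absorbed into the error term of $\log A_{L}^{N}$, which is all that is used downstream).
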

\begin{rem}
Recall that $\left|\Delta\left(\vec z\right)\right|^{2}=\prod_{j\ne k}\left|z_{j}-z_{k}\right|$
and $\dd\refmeas\left(w\right)=\frac{L^{2}}{\pi}e^{-L^{2}\left|w\right|^{2}}\,\dd m\left(w\right)$,
where $m$ is Lebesgue measure on $\bbc$.
\end{rem}
\begin{proof}
Let $\vec{\xi}=\lvecl{\xi}0N$. The joint density of $\vec{\xi}$
w.r.t. Lebesgue measure on $\bbc^{N+1}$ is given by
\begin{equation}
g\left(\vec{\xi}\right)=\frac{1}{\pi^{N+1}}\exp\left(-\sum_{k=0}^{N}\left|\xi_{k}\right|^{2}\right),\quad\vec{\xi}\in\bbc^{N+1}.\label{eq:joint_density_xis}
\end{equation}
We now define the monic polynomials corresponding to $\truncscaledpoly\left(z\right)$,
\[
q_{\vec z}\left(z\right)=\frac{\truncscaledpoly\left(z\right)}{\xi_{N}\cdot\frac{L^{N}}{\sqrt{N!}}}=z^{N}+b_{N-1}z^{N-1}+\dots+b_{0}=\prod_{j=1}^{N}\left(z-z_{j}\right),
\]
where
\[
b_{k}=\frac{\xi_{k}}{\xi_{N}}\cdot\frac{\sqrt{N!}}{\sqrt{k!}}\cdot L^{k-N},\quad k\in\left\{ 0,\dots N-1\right\} .
\]
The Jacobian of the map $T_{1}:\vec z\mapsto\vec b$ that takes the
zeros of the polynomial to the coefficients is given by $\left|\Delta\left(\vec z\right)\right|^{2}$
(see for example \cite[Lemma 1.1.1]{hough2009zeros}). Clearly, the
Jacobian of the (complex) linear map $T_{2}:\vec b\mapsto\vec{\xi}$
is given by
\[
\prod_{k=0}^{N-1}\frac{N!}{\left|\xi_{N}\right|^{2}k!\cdot L^{2\left(N-k\right)}}=\frac{\left(N!\right)^{N+1}}{\left|\xi_{N}\right|^{2N}\prod_{k=1}^{N}k!\cdot L^{N\left(N+1\right)}}\defeq\left|\xi_{N}\right|^{-2N}\cdot A^{\prime}.
\]
Therefore, after doing the change of variables from $\lvecl{\xi}0N$
to $\left(\vec z,\xi_{N}\right)$, and using Claim \ref{claim:integ_wrt_ref_meas},
we arrive at the joint density,
\[
g^{\prime}\left(\vec z,\xi_{N}\right)=\frac{1}{\pi^{N+1}}\cdot\frac{\left|\xi_{N}\right|^{2N}}{A^{\prime}}\cdot\left|\Delta\left(\vec z\right)\right|^{2}\cdot\exp\left(-\left|\xi_{N}\right|^{2}\frac{L^{2N}}{N!}\cdot\int_{\bbc}\left|q_{\vec z}\left(w\right)\right|^{2}\,\dd\refmeas\left(w\right)\right).
\]
We now integrate out $\xi_{N}$, and use the fact
\[
\frac{1}{\pi}\int_{\bbc}\left|w\right|^{2N}e^{-B\left|w\right|^{2}}\dd\lebmeas\left(w\right)=N!\cdot B^{-\left(N+1\right)},
\]
to get
\begin{eqnarray*}
f\left(\vec z\right) & = & \frac{1}{\pi^{N}}\cdot\frac{1}{A^{\prime}}\cdot N!\left(\frac{N!}{L^{2N}}\right)^{N+1}\left|\Delta\left(\vec z\right)\right|^{2}\left[\int_{\bbc}\left|q_{\vec z}\left(w\right)\right|^{2}\,\dd\refmeas\left(w\right)\right]^{-\left(N+1\right)}\\
 & = & \frac{N!\cdot\prod_{k=1}^{N}k!}{\pi^{N}L^{N\left(N+1\right)}}\left|\Delta\left(\vec z\right)\right|^{2}\left[\int_{\bbc}\left|q_{\vec z}\left(w\right)\right|^{2}\,\dd\refmeas\left(w\right)\right]^{-\left(N+1\right)}.
\end{eqnarray*}
Stirling's approximation for the factorial shows that
\[
N!\cdot\prod_{k=1}^{N}k!=\frac{N^{2}\log N}{2}-\frac{3}{4}N^{2}+O\left(N\log N\right).
\]
\end{proof}
\begin{claim}
\label{claim:integ_wrt_ref_meas}Let $L>0$ and $k\in\bbn$. We have
\begin{equation}
\int_{\bbc}\left|w\right|^{2k}\,\dd\refmeas\left(w\right)=\frac{k!}{L^{2k}}.\label{eq:moment_ref_meas}
\end{equation}
In addition, for $N\in\bbn^{+}$ let $\truncscaledpoly\left(z\right)=\sum_{k=0}^{N}\xi_{k}\frac{\left(Lz\right)^{k}}{\sqrt{k!}}$,
and let $q_{\vec z}\left(z\right)$ be the corresponding monic polynomial.
Then,
\[
\int_{\bbc}\left|\truncscaledpoly\left(w\right)\right|^{2}\,\dd\refmeas\left(w\right)=\sum_{k=0}^{N}\left|\xi_{k}\right|^{2},
\]
and thus
\[
\int_{\bbc}\left|q_{\vec z}\left(w\right)\right|^{2}\,\dd\refmeas\left(w\right)=\left(\left|\xi_{N}\right|^{2}\frac{L^{2N}}{N!}\right)^{-1}\cdot\sum_{k=0}^{N}\left|\xi_{k}\right|^{2}.
\]
\end{claim}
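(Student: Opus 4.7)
\medskip

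\textbf{Proof plan for Claim \ref{claim:integ_wrt_ref_meas}.} The three identities are essentially a standard Gaussian orthogonality computation, organized around the fact that $\refmeas$ makes the scaled monomials $L^{k}w^{k}/\sqrt{k!}$ into an orthonormal family in $L^{2}(\refmeas)$. The plan is to establish the moment formula \eqref{eq:moment_ref_meas} first, then deduce the formula for $\int|\truncscaledpoly|^{2}\,\dd\refmeas$ by expanding the square and using the vanishing of off-diagonal terms, and finally divide out the leading coefficient to obtain the statement about the monic polynomial $q_{\vec z}$.

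For \eqref{eq:moment_ref_meas}, I would switch to polar coordinates $w = re^{i\theta}$, so that
\[
\int_{\bbc}|w|^{2k}\,\dd\refmeas(w) = \frac{L^{2}}{\pi}\int_{0}^{2\pi}\int_{0}^{\infty} r^{2k}\, e^{-L^{2}r^{2}}\, r\,\dd r\,\dd\theta = 2L^{2}\int_{0}^{\infty} r^{2k+1}e^{-L^{2}r^{2}}\,\dd r.
\]
The substitution $u = L^{2}r^{2}$ turns this into $L^{-2k}\int_{0}^{\infty}u^{k}e^{-u}\,\dd u = k!/L^{2k}$.

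Next, for the second identity, I expand
\[
|\truncscaledpoly(w)|^{2} = \sum_{j,k=0}^{N}\xi_{j}\overline{\xi_{k}}\,\frac{L^{j+k}}{\sqrt{j!\,k!}}\,w^{j}\overline{w}^{k}
\]
and integrate termwise against $\refmeas$. Using polar coordinates again, the angular integral $\int_{0}^{2\pi}e^{i(j-k)\theta}\,\dd\theta$ vanishes unless $j=k$, so all off-diagonal terms drop out. For each diagonal term, \eqref{eq:moment_ref_meas} gives $\int|w|^{2k}\,\dd\refmeas = k!/L^{2k}$, which exactly cancels the prefactor $L^{2k}/k!$, leaving $\sum_{k=0}^{N}|\xi_{k}|^{2}$.

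Finally, the third identity is immediate: from the definition of $q_{\vec z}$ in the proof of Lemma \ref{lem:poly_change_of_vars}, we have $\truncscaledpoly(w) = \xi_{N}\cdot(L^{N}/\sqrt{N!})\cdot q_{\vec z}(w)$, so $|q_{\vec z}(w)|^{2} = |\truncscaledpoly(w)|^{2}\cdot(|\xi_{N}|^{2}L^{2N}/N!)^{-1}$ pointwise, and integrating both sides against $\refmeas$ and invoking the second identity yields the claim. There is no serious obstacle here; the only point requiring any care is making sure the angular orthogonality argument is invoked cleanly rather than trying to compute the Gaussian integrals over $\bbc$ by hand.
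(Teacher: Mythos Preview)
Your proposal is correct and follows essentially the same approach as the paper: polar coordinates and a radial substitution for the moment formula, orthogonality of the monomials $w^{j}\overline{w}^{k}$ under $\refmeas$ to kill the off-diagonal terms in the expansion of $|\truncscaledpoly|^{2}$, and then division by the squared leading coefficient to pass to $q_{\vec z}$. The only cosmetic difference is that the paper substitutes $s=t^{2}$ and then evaluates $L^{2}\int_{0}^{\infty}s^{k}e^{-L^{2}s}\,\dd s$, whereas you go directly to $u=L^{2}r^{2}$; the computations are equivalent.
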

\begin{proof}
Notice that for any $k\in\bbn$,
\begin{eqnarray*}
\int_{\bbc}\left|w\right|^{2k}\,\dd\refmeas\left(w\right) & = & \frac{L^{2}}{\pi}\int_{\bbc}\left|w\right|^{2k}e^{-L^{2}\left|w\right|^{2}}\,\dd\lebmeas\left(w\right)=2L^{2}\int_{0}^{\infty}t^{2k+1}e^{-L^{2}t^{2}}\,\dd t\\
 & = & L^{2}\int_{0}^{\infty}s^{k}e^{-L^{2}s}\,\dd s=\frac{k!}{L^{2k}}.
\end{eqnarray*}
Therefore, using the orthogonality of $z^{j}$ and $\overline{z}^{k}$
w.r.t. the measure $\refmeas$, we have
\[
\int_{\bbc}\left|\truncscaledpoly\left(w\right)\right|^{2}\,\dd\refmeas\left(w\right)=\sum_{k=0}^{N}\left|\xi_{k}\right|^{2}\frac{L^{2k}}{k!}\cdot\int_{\bbc}\left|w\right|^{2k}\,\dd\refmeas\left(w\right)=\sum_{k=0}^{N}\left|\xi_{k}\right|^{2}.
\]
\end{proof}
The following estimate is sometimes called the Bernstein-Markov property
of the measure $\refmeas$ (cf. \cite[pg. 3939]{zeitouni2010large}).
\begin{lem}
\label{lem:bern_markov}Let $L>0$ and let $h$ be a polynomial of
degree $N$. We have
\[
\sup_{w\in\bbc}\left\{ \left|h\left(w\right)\right|^{2}e^{-L^{2}\left|w\right|^{2}}\right\} \le\int_{\bbc}\left|h\left(z\right)\right|^{2}\,\dd\refmeas\left(z\right).
\]
\end{lem}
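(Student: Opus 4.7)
The plan is to reduce the sup-vs-integral inequality to the classical sub-mean-value property of $|g|^{2}$ for a cleverly chosen entire function $g$, by absorbing the translated weight into a holomorphic exponential factor. Fix $w_{0}\in\bbc$ and define
\[
g(u)=h(w_{0}+u)\cdot e^{-L^{2}u\bar{w_{0}}}.
\]
Then $g$ is entire (polynomial times an exponential in $u$), and a direct computation using $|w_{0}+u|^{2}=|w_{0}|^{2}+2\operatorname{Re}(u\bar{w_{0}})+|u|^{2}$ shows that
\[
|g(u)|^{2}e^{-L^{2}|u|^{2}}=|h(w_{0}+u)|^{2}e^{-L^{2}(|w_{0}+u|^{2}-|w_{0}|^{2})}.
\]

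Next, expand $g$ as a Taylor series at the origin, $g(u)=\sum_{k\ge0}a_{k}u^{k}$, so that orthogonality of $\{u^{k}\}$ on circles gives
\[
\frac{1}{2\pi}\int_{0}^{2\pi}|g(re^{i\theta})|^{2}\,d\theta=\sum_{k\ge0}|a_{k}|^{2}r^{2k}\ge|a_{0}|^{2}=|g(0)|^{2}=|h(w_{0})|^{2},\quad r>0.
\]
Multiply both sides by $2L^{2}re^{-L^{2}r^{2}}$, integrate in $r$ over $(0,\infty)$, and use $\int_{0}^{\infty}2L^{2}re^{-L^{2}r^{2}}\,dr=1$. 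In polar coordinates the left-hand side rewrites as $\frac{L^{2}}{\pi}\int_{\bbc}|g(u)|^{2}e^{-L^{2}|u|^{2}}\,dm(u)$, so
\[
|h(w_{0})|^{2}\le\frac{L^{2}}{\pi}\int_{\bbc}|g(u)|^{2}e^{-L^{2}|u|^{2}}\,dm(u).
\]

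Finally, substitute $z=w_{0}+u$ and use the identity for $|g(u)|^{2}e^{-L^{2}|u|^{2}}$ above to turn the right-hand side into
\[
\frac{L^{2}}{\pi}e^{L^{2}|w_{0}|^{2}}\int_{\bbc}|h(z)|^{2}e^{-L^{2}|z|^{2}}\,dm(z)=e^{L^{2}|w_{0}|^{2}}\int_{\bbc}|h(z)|^{2}\,\dd\refmeas(z).
\]
Rearranging yields $|h(w_{0})|^{2}e^{-L^{2}|w_{0}|^{2}}\le\int_{\bbc}|h(z)|^{2}\,\dd\refmeas(z)$, and taking the supremum over $w_{0}\in\bbc$ gives the claim.

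There is no real obstacle here beyond selecting the correct holomorphic "modulation" factor $e^{-L^{2}u\bar{w_{0}}}$; once that is in place, everything is reduced to the elementary sub-mean-value inequality for a holomorphic function plus a Gaussian integration in the radial variable. The integrability of $|h|^{2}\,\dd\refmeas$ is automatic because $h$ is a polynomial, so no care is needed there.
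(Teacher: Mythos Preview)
Your proof is correct, and it proceeds along a genuinely different line from the paper's argument. The paper introduces the finite reproducing kernel $\Pi_{N,L}(w,z)=\sum_{k=0}^{N}\frac{L^{2k}}{k!}(w\bar z)^{k}$ for polynomials of degree $\le N$ with respect to $\refmeas$, uses the reproducing identity $h(w)=\int\Pi_{N,L}(w,z)h(z)\,\dd\refmeas(z)$, applies Cauchy--Schwarz, and then invokes the diagonal bound $\Pi_{N,L}(w,w)\le e^{L^{2}|w|^{2}}$. Your route instead performs a Bargmann--Fock style translation: the holomorphic modulation $g(u)=h(w_{0}+u)e^{-L^{2}u\bar w_{0}}$ converts the problem into the sub-mean-value inequality for $|g|^{2}$, followed by a Gaussian radial integration. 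The two arguments are closely related (your computation is essentially the full Fock-space reproducing-kernel bound, with kernel $e^{L^{2}w\bar z}$, unpacked by hand), but yours is more elementary in that it avoids any explicit kernel machinery and makes no use of the degree $N$; it in fact shows the inequality for any entire $h$ with $\int|h|^{2}\,\dd\refmeas<\infty$. The paper's version stays within the finite-dimensional polynomial space, which fits the ``Bernstein--Markov'' label more directly.
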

\begin{proof}
Introduce the reproducing kernel (with respect to $\refmeas$)
\[
\Pi_{N,L}\left(w,z\right)=\sum_{k=0}^{N}\frac{L^{2k}}{k!}\left(w\overline{z}\right)^{k},\quad z,w\in\bbc.
\]
It has the following basic properties:
\begin{eqnarray*}
1. &  & h\left(w\right)=\int_{\bbc}\Pi_{N,L}\left(w,z\right)h\left(z\right)\,\dd\refmeas\left(z\right),\\
\mathrm{2.} &  & \Pi_{N,L}\left(w,w\right)=\int_{\bbc}\left|\Pi_{N,L}\left(w,z\right)\right|^{2}\,\dd\refmeas\left(z\right),\\
3. &  & \Pi_{N,L}\left(w,w\right)\le e^{L^{2}\left|w\right|^{2}}.
\end{eqnarray*}
The first two properties follow from (\ref{eq:moment_ref_meas}) and
Property $3$ is clear from the definition. Applying the Cauchy-Schwarz
inequality, we find
\begin{eqnarray*}
\left|h\left(w\right)\right|^{2} & \le & \left(\int_{\bbc}\left|h\left(z\right)\right|^{2}\,\dd\refmeas\left(z\right)\right)\left(\int_{\bbc}\left|\Pi_{N,L}\left(w,z\right)\right|^{2}\,\dd\refmeas\left(z\right)\right)\\
 & \le & \left(\int_{\bbc}\left|h\left(z\right)\right|^{2}\,\dd\refmeas\left(z\right)\right)e^{L^{2}\left|w\right|^{2}},
\end{eqnarray*}
where we used the fact $\refmeas$ is a probability measure.
\end{proof}

\section{\label{sec:poten_theory}Some background on Logarithmic Potential
Theory}

All the required background on weighted logarithmic potential theory
can be found in the book \cite{saff2013logarithmic}, notice that
we use here the opposite sign convention for the logarithmic potential
of a measure.

Let $\nu\in\probmeas$ be a probability measure. Consider the following
weighted energy functional
\[
\funcenerg{\nu}=J_{\alpha}\left(\nu\right)=\int_{\bbc}\frac{\left|w\right|^{2}}{\alpha}\,\dd\nu\left(w\right)-\logenerg{\nu},
\]
where $\alpha>0$ is a parameter. We recall that the logarithmic potential
and logarithmic energy of $\nu$ are given by:
\[
\logpot{\nu}z=\int_{\bbc}\log\left|z-w\right|\,\dd\nu\left(w\right),\quad\logenerg{\nu}=\int_{\bbc}\logpot{\nu}z\,\dd\nu\left(z\right)=\int_{\bbc^{2}}\log\left|z-w\right|\,\dd\nu\left(z\right)\dd\nu\left(w\right).
\]
The logarithmic energy $\logenerg{\nu}$ is an upper semi-continuous
and strictly concave functional, on measures with finite logarithmic
energy and compact support (\cite[Proposition 2.2]{hiai1998maximizing}).
Since $\int_{\bbc}\left|w\right|^{2}\,\dd\nu\left(w\right)$ is a
continuous linear functional, it follows that the functional $\funcenerg{\nu}$
is lower semi-continuous and strictly convex. 

It is known (see \cite[Example IV.6.2]{saff2013logarithmic}, but
notice the different scaling) that the global minimizer of this functional
is the uniform measure on the disk $\disc 0{\sqrt{\alpha}}$ which
we denote by $\eqmeas{\alpha}$. This measure is sometimes called
the equilibrium or extremal measure. An easy calculation shows
\[
\logpot{\eqmeas{\alpha}}z=\begin{cases}
\frac{\left|z\right|^{2}}{2\alpha}+\frac{\log\alpha}{2}-\frac{1}{2} & \,\left|z\right|\le\sqrt{\alpha};\\
\log\left|z\right| & \,\left|z\right|\ge\sqrt{\alpha},
\end{cases}\qquad\logenerg{\eqmeas{\alpha}}=\frac{\log\alpha}{2}-\frac{1}{4},
\]
and
\[
F_{\alpha}\defeq\int_{\bbc}\frac{\left|w\right|^{2}}{2\alpha}\,\dd\eqmeas{\alpha}\left(w\right)-\logenerg{\eqmeas{\alpha}}=\frac{1}{4}-\left(\frac{\log\alpha}{2}-\frac{1}{4}\right)=\frac{1}{2}-\frac{\log\alpha}{2}.
\]
Let $\calH$ be the set of all subharmonic functions $g\left(z\right)$
on $\bbc$ that are harmonic for large $\left|z\right|$, and $g\left(z\right)-\log\left|z\right|$
is bounded from above near $\infty$. By Theorem I.4.1 in \cite{saff2013logarithmic},
we have 
\[
\logpot{\eqmeas{\alpha}}z+F_{\alpha}=\frac{\left|z\right|^{2}}{2\alpha}=\sup\setd{g\left(z\right)}{g\in\calH\mbox{ and}\,g\left(w\right)\le\frac{\left|w\right|^{2}}{2\alpha},\,\forall\left|w\right|\le\sqrt{\alpha}}.
\]
We summarize the implications in the following claim.
\begin{claim}
\label{claim:sup_log_pot}Let $\nu\in\probmeas$ be a probability
measure with compact support, and define
\[
B\left(\nu\right)=2\sup_{w\in\bbc}\left\{ \logpot{\nu}w-\frac{\left|w\right|^{2}}{2\alpha}\right\} .
\]
We have
\[
B\left(\nu\right)=B_{\alpha}\left(\nu\right)\defeq2\sup_{\left|w\right|\le\sqrt{\alpha}}\left\{ \logpot{\nu}w-\frac{\left|w\right|^{2}}{2\alpha}\right\} .
\]
\end{claim}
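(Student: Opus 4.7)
The inequality $B_\alpha(\nu)\le B(\nu)$ is immediate, since the left-hand sup is taken over a smaller set. The content is the reverse direction, and my plan is to derive it as a one-line consequence of the variational characterization just cited from \cite{SaT}, namely
\[
\frac{|z|^{2}}{2\alpha}=\sup\setd{g(z)}{g\in\calH\text{ and }g(w)\le\tfrac{|w|^{2}}{2\alpha},\,\forall|w|\le\sqrt{\alpha}}.
\]
In other words, the weight $Q(z):=|z|^{2}/(2\alpha)$ is itself the upper envelope of the class $\calH$, subject to its being dominated by $Q$ on the closed disk $\overline{\disc 0{\sqrt{\alpha}}}$. The strategy is simply to exhibit a suitably shifted copy of $\logpot{\nu}{\cdot}$ as a member of this constrained class.

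Concretely, set
\[
c:=\frac{B_{\alpha}(\nu)}{2}=\sup_{|w|\le\sqrt{\alpha}}\left\{ \logpot{\nu}w-\frac{|w|^{2}}{2\alpha}\right\} ,
\]
and consider $g(z):=\logpot{\nu}z-c$. The first step is to verify that $g\in\calH$. Since $\nu$ has compact support, $\logpot{\nu}{\cdot}$ is subharmonic on all of $\bbc$ and harmonic off any disk containing $\mathrm{supp}\,\nu$; moreover
\[
\logpot{\nu}z-\log|z|=\int_{\bbc}\log\left|1-\frac{w}{z}\right|\,\dd\nu(w)\xrightarrow[|z|\to\infty]{}0
\]
by dominated convergence, so $\logpot{\nu}z-\log|z|$ is bounded above near $\infty$. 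Subtracting the constant $c$ preserves each of these properties, so $g\in\calH$. The second step is the constraint: by the very definition of $c$, we have $g(w)\le Q(w)$ for all $|w|\le\sqrt{\alpha}$. The envelope characterization then forces $g(z)\le Q(z)$ for \emph{every} $z\in\bbc$, which rearranges to $\logpot{\nu}z-|z|^{2}/(2\alpha)\le c$ for all $z$. Taking the supremum of the left-hand side over $z\in\bbc$ yields $B(\nu)\le 2c=B_{\alpha}(\nu)$, completing the proof.

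There is essentially no obstacle here; the claim is a direct corollary of the extremal property of the equilibrium measure $\eqmeas{\alpha}$ associated with the weight $Q$. The only verification of substance is the membership $g\in\calH$, and within that the growth condition $\logpot{\nu}z-\log|z|\to 0$ at $\infty$, which is standard for compactly supported probability measures. The rest is bookkeeping with definitions.
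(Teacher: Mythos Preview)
Your proof is correct and follows essentially the same approach as the paper: verify that $\logpot{\nu}{\cdot}-\tfrac{B_\alpha(\nu)}{2}$ belongs to the class $\calH$, observe that by definition of $B_\alpha(\nu)$ it is dominated by $|z|^2/(2\alpha)$ on $\overline{\disc 0{\sqrt{\alpha}}}$, and invoke the envelope characterization from \cite{SaT} to extend this domination to all of $\bbc$. You are somewhat more explicit than the paper in checking the growth condition at infinity, but the argument is the same.
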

\begin{proof}
Notice $\logpot{\nu}z$ is a subharmonic function, that is harmonic
for large $\left|z\right|$, and $\logpot{\nu}w-\log\left|z\right|$
is bounded from above near $\infty$. Since,
\[
\logpot{\nu}z-\frac{B_{\alpha}\left(\nu\right)}{2}\le\frac{\left|z\right|^{2}}{2\alpha},\quad z\in\disc 0{\sqrt{\alpha}},
\]
we have
\[
\logpot{\nu}z-\frac{B_{\alpha}\left(\nu\right)}{2}\le\logpot{\eqmeas{\alpha}}z+F_{\alpha}=\frac{\left|z\right|^{2}}{2\alpha},\quad z\in\bbc,
\]
which implies $B\left(\nu\right)\le B_{\alpha}\left(\nu\right)$.
\end{proof}
\bibliographystyle{amsalpha}
\bibliography{valdistrefs_v1_0}

\end{document}